\numberwithin{equation}{section}
\newcommand{\eps}{\varepsilon}
\theoremstyle{plain}
\newtheorem{theorem}{Theorem}[section]
\newtheorem{lemma}[theorem]{Lemma}
\newtheorem{corollary}[theorem]{Corollary}
\newtheorem{definition}[theorem]{Definition}
\newtheorem{remark}[theorem]{Remark}
\newtheorem{proposition}[theorem]{Proposition}
\newcommand{\E}{\mathbb{E}}
\newcommand{\PP}{\mathbb{P}}
\newcommand{\1}{\mathbbm{1}} 
\newcommand{\N}{\mathbb{N}}
\newcommand{\Z}{\mathbb{Z}}
\newcommand{\R}{\mathbb{R}}
\newcommand{\EE}[1]{\mathbb{E} \left[ #1 \right]}
\newcommand{\pp}[1]{\mathbb{P} \left( #1 \right)}
\newcommand{\blue}[1]{\textcolor{black}{#1}}
\begin{document}
\title{Haldane's formula in Cannings models: \\ The case of  moderately weak selection}
	\author{Florin Boenkost, Adri\'an Gonz\'{a}lez Casanova, \\ Cornelia Pokalyuk,  Anton Wakolbinger}
	\maketitle

\abstract{We introduce a Cannings model with directional selection via a paintbox construction and establish a strong duality with the line counting process of a new \emph{Cannings ancestral selection graph} in discrete time. This duality also yields  
		a formula for the fixation probability of the beneficial type. Haldane's formula states that for a single selectively advantageous individual in a population of haploid individuals of size $N$ the prob\-ability of fixation is asymptotically (as $N\to \infty$) equal to the selective advantage of haploids $s_N$  divided by half of the offspring variance. For a class of offspring distributions within Kingman attraction we prove this asymptotics for sequences $s_N$ obeying $N^{-1} \ll s_N \ll N^{-1/2} $, which is a regime of ``moderately weak selection''. It turns out that  for $ s_N \ll N^{-2/3} $ the Cannings ancestral selection graph is so close to  the ancestral selection graph of a Moran model that a suitable coupling argument allows  to play the problem back asymptotically to the fixation probability in the Moran model, which can be computed explicitly.}

\section{Introduction}

	In population genetics the standard model for the neutral reproduction of  haploid individuals in  discrete time  and with constant population size $N$  is the classical \emph{Wright-Fisher model}: the offspring numbers from one generation to the next arise by throwing $N$ times a die with $N$ faces and are thus   Multinomial$(N;1/N,\ldots, 1/N)$-distributed. This is a special instance within the 
general class of \emph{Cannings models}, see \cite{Cannings1974} and \cite[Chapter 3.3]{Ewens2004}, where the offspring numbers are assumed to be exchangeable and to sum to $N$. 
	
	In the Wright-Fisher model \emph{directional selection} can be included by appropriately biasing the weights of individuals that have a selectively beneficial type or a wildtype. 
	While for general Cannings models it is not completely clear how to incorporate selection, a biasing of weights can be done in a natural way for the large class of Cannings models that admit a \emph{paintbox representation} (in the sense that the $N$-tuple of offspring numbers is \emph{mixed multinomial}, see Section \ref{paintneutral}).
	
	 For this class of models we introduce a graphical representation which extends to the case with directional selection, and leads to a time discrete version of the ancestral selection graph that was developed by Krone and Neuhauser in \cite{KN} for the (continuous time) Moran model. 
	 
Recently  Gonz\'{a}lez Casanova and Span{\`o} constructed in \cite{GS} an ancestral selection graph for a special class of Cannings models. While their construction relies on analytic arguments,  we provide here a probabilistic construction which works for a wider class of models and also gives a clear interpretation of the  role of the geometric distribution of the number of potential parents in this context. This construction will be explained in Section \ref{DASG}. We will prove a sampling duality between the Cannings frequency process and the line counting process of the discrete ASG (alias \emph{Cannings ancestral selection process} or \emph{CASP}), see Theorem \ref{Theorem Duality}. This also allows to obtain a handsome representation of the fixation probability of the beneficial type in terms of the expected value of the CASP in equilibrium, see Corollary \ref{corfix}.

	The calculation of fixation probabilities is a prominent task in mathematical population genetics; for a review including a historical overview  see \cite {PW}.  A classical idea going back to Haldane, Fisher and Wright (see \cite{Haldane1927}, \cite{Fisher1922} and \cite{Wright1931}) and known as \emph{Haldane's formula},  is to approximate the probability of fixation of a beneficial allele with small selective advantage $s$  by the survival probability $\pi(s)$ of a supercritical, near-critical Galton-Watson branching process, 
	\begin{equation}\label{Haldane0}
	\pi(s) \sim \frac {s}{\rho^2/2} \,\, \mbox{ as } s\to 0
	\end{equation}
	where $\rho^2$ is the offspring variance and $1+s$ is the expected offspring size.
	
 In Remark~\ref{remark36}b) we will briefly discuss perspectives and frontiers of a derivation of \eqref{Haldane0} in terms of a branching process approximation.
    Couplings with Galton-Watson processes were used by Gonz\'{a}lez Casanova et al.\ in \cite{GKWY} to prove that  \eqref{Haldane0} indeed gives the asymptotics for the fixation probability for a class of Cannings models (with mixed multi-hypergeometric offspring numbers) that arise in the context of experimental evolution. 	This was achieved under the assumption that $s_N\sim N^{-b}$ with $0<b<1/2$, i.e. for a moderately strong selection. There, the question remained open if \eqref{Haldane0} also captures the asymptotics of the fixation probability for  $s_N\sim N^{-b}$ with $1/2\le b <1$. For the case $1/2 <  b <1$, Theorem \ref{Haldane  Formel} gives an affirmative answer for subclasses of Cannings models admitting a paintbox representation, and in particular also for the Wright-Fisher model with selection. In Theorem \ref{Haldane  Formel}a) we prove \emph{Haldane's formula} under the condition
\begin{equation}\label{sasymptotics}
	N^{-1+\eta}\le s_N \le   N^{-2/3-\eta}  
	\end{equation} 
	(with $\eta>0$)
	for a class of paintboxes that satisfy in particular M\"ohle's condition (which guarantees that the coalescents of the neutral Cannings model are in the domain of attraction of Kingman's coalescent, see \cite{M1}).
Under these  assumptions we show in Section~\ref{Section DASP} that  the CASP is close to the ASG line counting process of a corresponding Moran model over a long period of time. Indeed, for a Moran model with directional selection, a  representation of the fixation probability in terms of the  ASG line counting process is valid, and the fixation probability can be calculated explicitly; this we explain in Section \ref{Section Moran Model}. 
Relaxing \eqref{sasymptotics}, in Theorem \ref{Haldane  Formel} b) we prove \emph{Haldane's formula} for sequences $(s_N)$ with  
	\begin{equation}\label{sasymptoticsa}
	N^{-1+\eta}\le s_N \le   N^{-1/2-\eta}  
	\end{equation}
under  more restrictive moment conditions on the paintbox. 	
	Examples fulfilling these moment conditions include the Wright-Fisher case as well as paintboxes that are of Dirichlet-type, for more details see Section \ref{HaldaneCannings}. The main tool of the proof under these conditions is a concentration result on the equilibrium distribution of the CASP, see Section 7. This yields a sufficiently good estimate of the expected value of the CASP in equilibrium to show Haldane's formula by means of the above-mentioned Corollary \ref{corfix}.

		\section{Cannings models and Moran models with selection}
	\subsection{A paintbox representation for the neutral reproduction}\label{paintneutral}
	In a neutral Cannings model with population size $N$, the central concept is the exchangeable \mbox{$N$-tuple} $\nu =(\nu_1, \ldots, \nu_N)$ of \emph{offspring sizes}, with non-negative integer-valued components summing to $N$. A reasonably large class of such random variables $\nu$  admits a \emph{paintbox construction}, i.e.\ has a mixed multinomial distribution with parameters $N$ and $\mathscr{W}$, where 
	$\mathscr{W}=(W_1,W_2,\dots ,W_N)$ is an exchangeable random $N$-tuple of probability weights taking its values in  \[\Delta_N=\left\lbrace  (x_1,x_2,\dots x_N) :  x_i \ge 0, \sum_{i=1}^N x_i =1 \right\rbrace.\] 
	While this is clearly reminiscent of Kingman's paintbox representation of  exchangeable partitions of $\mathbb N$, here we are dealing with a finite $N$. As such,
	obviously, not all exchangeable offspring sizes are mixed multinomial -- consider  e.g.\ a uniform permutation of the vector $(2,\dots,2,0,\dots,0)$. On the other hand, the exchangeable mixed multinomials cover a wide range of applications; e.g., they can be seen as approximations of the offspring sizes in a model  of experimental evolution, where at the end of each reproduction cycle $N$ individuals are sampled \emph{without replacement} from a union of $N$ families with large i.i.d.\ sizes; see \cite{GKWY} and \cite{BGPW},  where the distribution of the family sizes was assumed to be geometric with expectation $\gamma =100$. This leads to a mixed multi-hypergeometric offspring distribution, whose analogue for $\gamma = \infty$ would be a mixed multinomial offspring distribution with $\mathscr L(\mathscr W)$ the Dirichlet($1,\ldots,1$)-distribution on~$\Delta_N$.	
	Let us now briefly review the graph of genealogical relationships in a Cannings model.  In each generation $g$, the individuals are numbered by $i \in [N]:= \left\lbrace1,\dots,N \right\rbrace$ and denoted by $(i,g)$.  A \emph{parental relation} between individuals in generation $g$ and $g-1$ is defined in the following way. Let $\mathscr{W}^{(g)}, g\in \mathbb Z$, be i.i.d.\ copies of $\mathscr W$.   Every individual $(j,g)$ is assigned a parent $(V_{(j,g)},g-1)$ in generation $g-1$ by means of an $[N]$-valued  random variable $V_{(j,g)}$ with conditional distribution $\PP(V_{(j,g)}=i|\mathscr{W}^{(g-1)})=W_i^{(g-1)}$, $i \in [N]$. For each $g\in \mathbb Z$, the random variables $V_{(j,g)}$, $j=1,\ldots, N$, are assumed to be independent given $\mathscr{W}^{(g-1)}$.  Also, for each $g \in \mathbb Z$, due to the exchangeability of $(W_1^{(g-1)},\dots,W_N^{(g-1)} )$, the random variables $V_{(1,g)},\dots ,V_{(N,g)}$ are uniformly distributed on $[N]$, and in general are correlated.  With this construction within one generation step we produce an exchangeable $N$-tuple of \emph{offspring sizes}, i.e.~the number of children for each individual $(i,g-1)$, $i \in [N]$. Due to the assumed independence of the random variables $\mathscr{W}^{(g)}, g\in \mathbb Z$, the offspring sizes as well as the ``assignments to parents'' $(V_{(1,g)},\dots ,V_{(N,g)})$ are independent along the generations $g$.
	
	\blue{Later in the paper we will deal with a sequence of Cannings models indexed by $N$, which will come along with a sequence $(\mathscr W^{(N)})$ and i.i.d. copies $\mathscr W^{(N,g)}$ of $\mathscr W^{(N)}$. For notational simplicity  we will sometimes suppress the superscript $N$ and keep writing $\mathscr W$ and $\mathscr W^{(g)}$ instead of $\mathscr W^{(N)}$ and $\mathscr W^{(N,g)}$; this will not lead to any ambiguities.}

	\subsection{A paintbox representation incorporating selection}\label{paintsel}	
	We now build directional selection with strength $s_N \in (0,1)$ into the model. Assume that  each individual has one of two types, either the \emph{beneficial type} or the \emph{wildtype}. 
	Let the chances to be chosen as a parent be modified by decreasing the weight of each wildtype individual by the factor $1-s_N$. 
	In other words, if individual $(i,g)$ has the wildtype the weight reduces to $\widetilde{W}_i:=(1-s_N) W_i$ and if the individual has the beneficial type the weight remains $\widetilde{W}_i :=W_i$.  Let $\widetilde {\mathscr W}^{(g)}:= (\widetilde{W}^{(g)}_1, \ldots,   \widetilde{W}^{(g)}_N)$. Given the type configuration in generation $g-1$, the parental relations are now generated in a two-step manner: First, assign the random weights $\widetilde{\mathscr W}^{(g-1)}$ to the individuals in generation $g-1$, then follow the rule
	\begin{equation} \label{model}
	\mathbb P((i,g-1) \mbox{ is parent of } (j,g)\, \mid \widetilde{\mathscr W}^{(g-1)}) = \frac{\widetilde{W}^{(g-1)}_i}{\sum_{\ell=1}^{N} \widetilde{W}^{(g-1)}_\ell}\,.
	\end{equation}
	Individual $(j,g)$ then inherits the  type from its parent. Note that $\widetilde{\mathscr W}^{(g-1)}$ is measurable with respect to  $\mathscr W^{(g-1)}$ together with the type configuration in generation $g-1$. Because of the assumed exchangeability of the $W^{(g-1)}_i$, $i=1,\ldots, N$, the distribution of the type configuration in generation~$g$ only depends on the \emph{number} of  individuals in generation $g-1$ that carry the beneficial type.
	Thus, formula \eqref{model} defines a Markovian dynamics for the type frequencies. We will denote the number of wildtype individuals in generation $g$ by $K_g$, and will call $(K_g)_{g=0,1,\ldots}$ a \emph{ Cannings frequency process} with parameters $N$, $\mathscr L(\mathscr W)$ and $s_N$. In particular, \eqref{model} implies that  given $\{K_{g-1}=k\}$, $K_g$ is  mixed Binomial with parameters $N$ and $P(k,\mathscr W)$, where
\begin{equation}\label{wildtypeprob}	
	P(k,\mathscr W)= \frac{(1-s_N)\sum_{i=1}^k W_i}{(1-s_N)\sum_{\ell=1}^k W_\ell +  \sum_{\ell=k+1}^N W_\ell}.
	\end{equation}
	\subsection{The Cannings ancestral selection process}\label{secCASP}
	Again let $N \in \mathbb N$, $\mathscr W$ as in Section \ref{paintneutral}, and $s_N \in (0,1)$. The \emph{Cannings ancestral selection process (CASP)} $\mathcal{A}=(A_m)_{m=0,1,\ldots}$ with parameters $N$, $\mathscr L(\mathscr W)$ and $s_N$ counts the number of potential ancestors in generation $g-m$ of a sample taken in generation $g$. We will give a graphical representation in Section \ref{DASG}; in the present section we {\em define} the CASP as an $[N]$-valued Markov chain whose one-step transition is composed of a branching and a coalescence step as follows: 
	Given $A_m=a$, the branching step takes $a$ into a sum $H=\sum_{\ell=1}^a G^{(\ell)}$ of independent Geom($1-s_N$)-random variables; in other words, the random variable $H$ has a negative binomial distribution with parameters $a$ and $1-s_N$, and thus takes its values in $\{a,a+1,\ldots\}$. (Here and below, we understand a Geom($p$)-distributed random variable as describing the number of trials (and not only failures) up to and including the first success in a coin tossing with success probability $p$.)
	The coalescence step arises (in distribution) through a two-stage experiment: first choose a random $\mathscr W$ according to the prescribed distribution $\mathscr L(\mathscr W)$,  then, given $\mathscr W$ and the number $H$ from the branching step,  place $H$ balls independently into $N$ boxes, where $W_i$ is the  probability that the first (second,\ldots, $H$-th) ball is placed into the $i$-th box, $i=1, \ldots, N$. The random variable $A_{m+1}$ is distributed as the number of non-empty boxes.\\
	\blue{To emphasize the dependence of $\mathcal{A}$ on $N$ we sometimes will write $\mathcal{A}^{(N)}=(A_m^{(N)})_{m \geq 0}$ in place of $\mathcal A$.}

		\subsection{\blue{Haldane's formula in the Moran model}} \label{Section Moran Model}
	In a two-type Moran model with constant population size $N$ and directional selection (see e.g.\ \cite[Chapter 6]{RD}), 
	each individual reproduces \blue{(in continuous time)} at a constant rate $\gamma/2, \gamma > 0$, and individuals of the beneficial type reproduce with an additional rate $s_N>0$.
	Let $Y_t^{(N)}$ be the number of wildtype individuals at time $t$, and let
	$(B_r^{(N)})_{r \geq 0}$ be the counting process of potential ancestors traced back from some fixed time. The process $(B^{(N)}_r)_{r \geq 0}$, which we call the Moran ancestral selection process (or MASP for short), is a Markov jump process with jumps from
	$k$ to $k+1$ at rate $k s_N \frac{N-k}{N}$ for $1\leq k \leq N-1$  and from 
	$k$ to $k-1$ at rate $\frac{\gamma}{N} \binom k 2$, see \cite{KN}. The well-known graphical representation of the Moran model yields a strong duality between $(Y_t^{(N)})_{t\geq 0}$  and $(B_r^{(N)})_{r\geq 0}$. Stated in words this says that a sample $J\subset [N]$ at time $t$ consists solely of wildtype individuals if and only if all the potential ancestors of $J$ are wildtype. This immediately leads to the following (hypergeometric) sampling duality	
	\begin{align}\label{Moran Duality}
	& \hspace{-2.8cm}\E\left[\frac{Y_t^{(N)}(Y_t^{(N)}-1)  \cdots(Y_t^{(N)}-(n-1))}{N(N-1)\cdots(N-(n-1))}\Big | Y_0^{(N)}=k\right] \\ &\quad \quad =\mathbb E\left[\frac{k(k-1)\cdots(k-(B^{(N)}_t-1))}{N(N-1)\cdots(N-(B_t^{(N)}-1))}\Big | B_0^{(N)}=n\right]
	\end{align}
	where $t\geq 0 $ and $k,\, n \in [N]$.
	Specializing \eqref{Moran Duality} to $k=N-1$ and $n=N$ gives
	\begin{equation}\label{specialMoran}
	\mathbb P(Y^{(N)}_t=0  | Y^{(N)}_0=N-1) = 1-\mathbb E\left[\frac{B^{(N)}_t}{N} \Big | B^{(N)}_0=N\right].
	\end{equation}
	Taking the limit $t\to \infty$ in \eqref{specialMoran} leads to
	\begin{equation}\label{fixNMoran}
		\pi^M_N:= \lim_{g\to \infty} \mathbb P[Y^{(N)}_t=0  | Y^{(N)}_0=N-1] = \mathbb E\left[\frac{B^{(N)}_{\rm eq}}{N}\right],
		\end{equation}
	where $B^{(N)}_{\rm eq}$ denotes the stationary distribution of $(B^{(N)}_r)_{r\geq 0}$.
	As observed in \cite{C},   
	$B_{\rm eq}^{(N)}$  
	is a binomially distributed random variable  with parameters $N$ and 
	$p_N:= \frac{2s_N}{2s_N +\gamma}$ that is conditioned not to vanish.
	\\
	In particular,
	\begin{align}\label{HaldaneMoran}
	\frac{\EE{B_{\rm eq}^{(N)}}}{N} =p_N \frac{1}{1-(1-p_N)^N}.\end{align}
	For $s_N=\frac{\alpha}{N}$, $\alpha >0$ (the case of weak selection) this specializes to Kimura's formula \cite{K}
	\begin{equation*}
	\pi_N^M=  \frac{2 s_N}{\gamma} \frac{1}{1-e^{-\frac{2 \alpha}{\gamma} }} (1+o(N^{-1})).
	\end{equation*}
	For $N^{-\eta} \ge s_N \ge N^{-1+\eta}$ (the case of moderate selection) we obtain Haldane's formula 
	\begin{align*}
	\pi_N^M= \frac{2 s_N}{\gamma} (1+o(s_N))
	\end{align*}
	and for $s>0$ (the case of strong selection) this results in
	\begin{align*}
	\pi_N^M= \frac{2 s}{2s+\gamma} (1+O(N^{-1})).
	\end{align*}

	\section{Main results} \label{Main Results}
	\subsection{Duality of Cannings frequency and ancestral selection process}\label{DCA}
 For $N \in \mathbb N$, $\mathscr W= \mathscr W^{(N)}$ as in Section \ref{paintneutral}, and $s_N \in (0,1)$, let $(K_g)_{g\geq 0}=((K^{(N)}_g)_{g\geq 0})$ be the Cannings frequency process with parameters $N$,  $\mathscr L(\mathscr W)$ and $s_N$ as defined in Section \ref{paintsel}, and let $(A_m)_{m\geq 0}$ be the Cannings ancestral selection process with parameters $N$,  $\mathscr L(\mathscr W)$ and $s_N$ as defined in Section~\ref{secCASP}.
	\begin{theorem} [Sampling duality] \label{Theorem Duality} Let $g \geq 0$, and $k, n \in [N]$. Let $\mathcal J_n$ be uniformly chosen from all subsets of $[N]$ of size $n$, and given $A_g=a$, $a\in [N]$,  let $\mathscr A_g$ be uniformly chosen from all subsets of  $[N]$ of size $a$. Then we have the following duality relation
		\begin{align}
		\mathbb P(\mathcal J_n \subset [K_g] \mid K_0=k) = \mathbb P(\mathscr  A_g \subset [k] \mid A_0 = n). \label{Duality with J}
		\end{align}
	\end{theorem}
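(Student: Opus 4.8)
The plan is to realise both processes on one common graphical space and to read off \eqref{Duality with J} as a pathwise, ``strong'' duality. The building block is a geometric potential-parent construction: to each individual $(j,g)$ I attach an independent $G_{(j,g)}\sim\mathrm{Geom}(1-s_N)$ together with potential parents $V_1^{(j,g)},\dots,V_{G_{(j,g)}}^{(j,g)}$ drawn i.i.d.\ from $\mathscr W^{(g-1)}$ (conditionally on the paintbox, independently across $j$), and I declare $(j,g)$ to be of wildtype exactly when \emph{all} of its potential parents are of wildtype. First I check that this reproduces the dynamics of Section~\ref{paintsel}: summing over the geometric number of potential parents, the conditional probability that $(j,g)$ is wildtype, given that the wildtype set in generation $g-1$ is $[k]$, equals $\sum_{m\ge 1}(1-s_N)s_N^{m-1}w^m=\frac{(1-s_N)w}{1-s_N w}$ with $w=\sum_{i=1}^k W_i$, which is precisely $P(k,\mathscr W)$ of \eqref{wildtypeprob}; hence $(K_g)$ is recovered. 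Simultaneously, tracing the potential parents of a sample backwards is exactly the branching-and-coalescence mechanism of Section~\ref{secCASP}: the $n$ lines of a size-$n$ sample spawn $H=\sum_{\ell=1}^n G^{(\ell)}$ potential parents (a sum of $n$ i.i.d.\ $\mathrm{Geom}(1-s_N)$, i.e.\ the branching step), and placing these $H$ parents into the $N$ individuals of the previous generation according to $\mathscr W$ and counting the distinct boxes hit is the coalescence step; so the number of distinct potential ancestors evolves as $(A_m)$ started from $A_0=n$.

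The pathwise core is then an immediate induction on generations: an individual is of wildtype if and only if all its potential parents are, so, unfolding from generation $g$ down to generation $0$, every individual of the sample $\mathcal J_n$ is of wildtype if and only if all potential ancestors of $\mathcal J_n$ in generation $0$ are of wildtype. Denoting by $\mathscr A_g$ the random set of these potential ancestors (of cardinality $A_g$), and using that $K_0=k$ means the wildtype set in generation $0$ is $[k]$, the event $\{\mathcal J_n\subset[K_g]\}$ coincides (via exchangeability, see below) with $\{\mathscr A_g\subset[k]\}$, so that the left-hand side of \eqref{Duality with J} equals $\mathbb P(\mathscr A_g\subset[k]\mid A_0=n)$, which is the right-hand side.

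The one point needing care---and the main obstacle---is to justify that $\mathscr A_g$ is, conditionally on $A_g=a$, uniformly distributed over the $a$-subsets of $[N]$, which is what identifies it with the uniformly chosen $\mathscr A_g$ in the statement. This follows from the exchangeability of $\mathscr W$: the generation-$0$ labels enter the construction only through $\mathscr W^{(0)}$, while the number of potential parents thrown into generation $0$ is independent of $\mathscr W^{(0)}$; placing that many balls i.i.d.\ according to an exchangeable weight vector makes the set of occupied boxes exchangeable, hence uniform given its cardinality. The same exchangeability (now across the individuals of a single generation, together with the independence of $\mathcal J_n$) lets me assume without loss of generality that $\mathcal J_n=\{1,\dots,n\}$ and that the wildtype set is the initial segment $[k]$, so that $\{\mathcal J_n\subset[K_g]\}$ may legitimately be read as ``the sample is entirely of wildtype''.

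Finally, I note the alternative, purely analytic route that avoids the graphical picture: rewrite \eqref{Duality with J} as the moment duality $\mathbb E[D(K_g,n)\mid K_0=k]=\mathbb E[D(k,A_g)\mid A_0=n]$ with the hypergeometric kernel $D(k,a)=\binom{k}{a}/\binom{N}{a}=k^{\underline a}/N^{\underline a}$, reduce the $g$-step identity to the one-step relation $P_K D=D\,P_A^{\mathsf T}$ by the standard transfer-matrix induction, and verify the one-step relation by showing that both $\mathbb E[D(K_1,n)\mid K_0=k]$ and $\mathbb E[D(k,A_1)\mid A_0=n]$ equal $\mathbb E\big[(\sum_{i=1}^k W_i)^{H}\big]$ with $H=\sum_{\ell=1}^n G^{(\ell)}$. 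I expect the graphical argument to be the more transparent of the two, and the one matching the \emph{strong} duality advertised in the introduction.
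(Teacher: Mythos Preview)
Your proposal is correct and follows essentially the same route as the paper: a graphical potential-parent construction yielding the pathwise identity ``the sample is entirely wildtype iff all its potential ancestors in generation $0$ are wildtype'', combined with exchangeability of $\mathscr W$ to obtain the uniformity of $\mathscr A_g$ given its cardinality. The only cosmetic difference is that the paper realises the construction via i.i.d.\ uniform picks from the unit square (so that the actual parent and the potential parents are read off the \emph{same} randomness, with a short lemma establishing the equivalence you take as a definition), whereas you define the geometric number of potential parents directly and then verify the marginal matches $P(k,\mathscr W)$; both arrive at the same strong duality \eqref{formal2}.
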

	\begin{remark}\label{strongdual}
		 A strong (pathwise) version of the duality relation~\eqref{Duality with J} will be provided by formula \eqref{formal2} in Section \ref{DASG}, which roughly spoken says that ``A sample from generation $g$ is entirely of wildtype if and only if all of its potential ancestors in generation $0$ are of wildtype''.
	\end{remark}
	Expressed in terms of $K_g$ and  $A_g$,  the sampling duality relation \eqref{Duality with J}  reads \blue{(as in the case of the Moran model, see Section \ref{Section Moran Model})}
	\begin{align}
	\mathbb E \left[\frac{K_g(K_g-1)\cdots  (K_g-n+1)}{N(N-1)\cdots(N-n+1)}\big| K_0=k\right] = \mathbb E\left[\frac{k(k-1)\cdots  (k-A_g+1)}{N(N-1)\cdots(N-A_g+1)}\big| A_0=n\right]. \label{CASP Hypergeometric Duality}
	\end{align}
	\blue{In the very same way as we derived \eqref{fixNMoran} from \eqref{Moran Duality} we obtain (first 
	specializing \eqref{CASP Hypergeometric Duality} to $k=N-1$ and $n=N$ and then
	taking the limit $g\to \infty$) the following}
	\begin{corollary}\label{corfix}
		Let $A_{\rm eq}$ have the stationary distribution of the Cannings ancestral selection process $(A_m)_{m\geq 0}$. The fixation probability of a single beneficial mutant is
		\begin{equation}\label{fixN}
		\pi_N:= \lim_{g\to \infty} \mathbb P[K_g=0  | K_0=N-1] = \mathbb E\left[\frac{A_{\rm eq}}{N}\right].
		\end{equation}
	\end{corollary}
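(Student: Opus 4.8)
The plan is to mimic the derivation of \eqref{fixNMoran} from \eqref{Moran Duality}, now using the sampling duality \eqref{CASP Hypergeometric Duality} in place of its Moran counterpart. Concretely, I would specialize \eqref{CASP Hypergeometric Duality} to $k=N-1$ and $n=N$, evaluate both sides in closed form, and then let $g\to\infty$.

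First I would treat the left-hand side. For $n=N$ the numerator $K_g(K_g-1)\cdots(K_g-N+1)$ is a product of $N$ consecutive integers with top factor $K_g\in\{0,\dots,N\}$; whenever $K_g\le N-1$ one of its factors equals $K_g-K_g=0$, while for $K_g=N$ it equals $N!$, which is exactly the denominator $N(N-1)\cdots 1$. Hence the left-hand side collapses to $\mathbb P(K_g=N\mid K_0=N-1)$, the probability that the wildtype has taken over by generation $g$. Since a generation in which all individuals share one type produces only offspring of that type, both $0$ and $N$ are absorbing states for $(K_g)$, and this quantity is the probability of absorption at $N$ up to time $g$.

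Next I would simplify the right-hand side for $k=N-1$. The ratio inside the expectation,
\[
\frac{(N-1)(N-2)\cdots(N-A_g)}{N(N-1)\cdots(N-A_g+1)},
\]
telescopes: the common factors $(N-1)\cdots(N-A_g+1)$ cancel and leave $\tfrac{N-A_g}{N}=1-\tfrac{A_g}{N}$. Taking expectations, the right-hand side equals $1-\tfrac1N\,\mathbb E[A_g\mid A_0=N]$. Combining the two evaluations yields, for every $g\ge 0$, the identity $\mathbb P(K_g=N\mid K_0=N-1)=1-\tfrac1N\,\mathbb E[A_g\mid A_0=N]$, which is the exact analogue of \eqref{specialMoran}.

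It then remains to pass to the limit $g\to\infty$, and this is where I expect the only genuine (though mild) work to lie. On the left I would use that $(K_g)$ is absorbed at one of its two boundary states almost surely --- from any interior state the boundary is reached with positive probability within a bounded number of steps --- so that $\lim_{g}\mathbb P(K_g=N\mid K_0=N-1)=1-\pi_N$, the event complementary to fixation of the beneficial type. On the right I would invoke that $(A_m)$ is an irreducible aperiodic chain on the finite set $\{1,\dots,N\}$ for $s_N\in(0,1)$ (a branching step can keep or raise the count, and coalescence can lower it), hence $A_g$ converges in distribution to $A_{\rm eq}$; since the state space is bounded, $\mathbb E[A_g\mid A_0=N]\to\mathbb E[A_{\rm eq}]$. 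Equating the two limits gives $1-\pi_N=1-\tfrac1N\,\mathbb E[A_{\rm eq}]$, i.e.\ $\pi_N=\mathbb E[A_{\rm eq}/N]$, which is \eqref{fixN}. Thus the substance is entirely the algebraic collapse of the two sides; the limiting step is a routine consequence of finite-state Markov chain ergodicity, and indeed the existence of the relevant $g\to\infty$ limits is already implicit in the definitions of $\pi_N$ and $A_{\rm eq}$.
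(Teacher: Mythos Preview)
Your proposal is correct and follows exactly the route the paper takes: specialize the hypergeometric duality \eqref{CASP Hypergeometric Duality} to $k=N-1$, $n=N$, collapse both sides algebraically, and pass to the limit $g\to\infty$. You have in fact been more explicit than the paper, which simply refers back to the Moran derivation; your observations that the left side equals $\mathbb P(K_g=N\mid K_0=N-1)$, that the right side telescopes to $1-\tfrac1N\mathbb E[A_g\mid A_0=N]$, and that finite-state ergodicity justifies the limit are all exactly what is needed.
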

	\begin{remark}
		In the light of Remark \ref{strongdual}, the representation \eqref{fixN} can be interpreted as follows: With a single beneficial mutant in generation $0$, the beneficial type goes to fixation if and only if the beneficial mutant  is among the potential ancestors in generation $0$ of the population at a late generation $g$. In the limit $g\to \infty$ the number of these potential ancestors is distributed as $A^{(N)}_{\rm eq}$, and given  $A^{(N)}_{\rm eq}$, the probability that the beneficial mutant is among them is $\frac{A^{(N)}_{\rm eq}}{N}$.
	\end{remark}
	\subsection{Haldane's formula for Cannings models with selection}\label{HaldaneCannings}
	Let $(s_N)$ be a sequence in $(0,1)$ that satisfies \eqref{sasymptotics}.  For each $N$ let $\mathscr W^{(N)}=(W_1^{(N)}, \ldots, W_N^{(N)}) $ be as in Section \ref{paintneutral}, and assume that for some $\rho^2\geq 1$ 
\begin{equation}\label{second}
	\EE{\left(W_1^{(N)}\right)^2}=\frac{\rho^2}{N^2}+O(N^{-3}),  	
	\end{equation}	
 with the $O(\cdot)$-terms  referring to $N\to \infty$. (The requirement $\rho^2\geq 1$ is natural because $\EE{W_1^{(N)}}=\frac 1N$.) We consider the following condition:
\begin{equation}\label{third}
	\EE{\left(W_1^{(N)}\right)^3}=O(N^{-3})
	\end{equation}	
 Note that \eqref{second} and \eqref{third} imply
	\begin{equation} \label{Moehle} N \EE{\left(W_1^{(N)}\right)^2} \to 0 \quad  \mbox{and}  \quad
	\EE{\left(W_1^{(N)}\right)^3} =o\left(\EE{\left(W_1^{(N)}\right)^2}\right).
	\end{equation}
For Cannings processes admitting a paintbox representation, {\color{black} \eqref{Moehle} is equivalent to M\"ohle's condition}, see \cite{M1}, which, in turn, is equivalent to the neutral Cannings coalescent being in the domain of attraction of a Kingman coalescent as $N\to \infty$.
We also consider the following condition which (see Remark \ref{remark36} a))
 is not implied by \eqref{second} and \eqref{third}:\\
There exists a  sequence $(h_N)$ of natural numbers with the properties
\begin{equation} \label{ellN}
h_N\to \infty \quad \mbox{and} \quad h_N=o(\ln N) \quad \mbox{as  } N\to \infty
\end{equation}
such that 
for all sufficiently large $N$ and all $n \le 2 h_N$
{\color{black}
\begin{equation} \label{moments}
\EE{\left(W_1^{(N)}\right)^{n}} \le \left(\frac{Kh_N}{N}\right)^n.
\end{equation}}
for some $K \geq 1$ not depending on $N$ and $n$.
	\begin{theorem} [Haldane's formula] \label{Haldane  Formel}\qquad \\
Consider a sequence of Cannings frequency processes $(K_g^{(N)} )_{g \geq 0}, \, N \geq 1$, with parameters $\mathscr L(\mathscr W^{(N)})$ and~$s_N$. Assume that the selection is moderately weak in the sense that $(s_N)$ satisfies \eqref{sasymptoticsa}, and assume that $\mathscr L(\mathscr W^{(N)})$ satisfies \eqref{second} with $\rho \ge 1$ and \eqref{third}. Then the fixation probabilities $\pi_N$ of single beneficial mutants follow the asymptotics
		\begin{equation}
		\pi_N= \frac{2 s_N}{\rho^2 } + o(s_N) \quad \mbox{as }N\to \infty \label{Haldane}
		\end{equation}
provided one of the following additional requirements a) or b) is satisfied:
		\begin{itemize}
		\item[a)] $(s_N)$ satisfies \eqref{sasymptotics}, 
		\item[b)] \blue{the moments of $(W_1^{(N)})$ obey \eqref{moments}, with a sequence $(h_N)$ that satisfies \eqref{ellN}.}
			\end{itemize}
	\end{theorem}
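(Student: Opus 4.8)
The entire argument hinges on Corollary~\ref{corfix}, which identifies the fixation probability with the stationary mean of the CASP: $\pi_N=\mathbb{E}[A_{\rm eq}/N]$. So the task reduces to showing $\mathbb{E}[A_{\rm eq}]=\tfrac{2s_N N}{\rho^2}+o(s_N N)$. I would begin by computing the one-step drift. From $A_m=a$ the branching step yields $H$, a $\mathrm{NegBin}(a,1-s_N)$ variable with $\mathbb{E}[H]=a/(1-s_N)=a+a s_N+O(a s_N^2)$; the coalescence step places $H$ balls and keeps the non-empty boxes, so conditioning on $\mathscr{W}$ and expanding $(1-W_i)^H$ gives, using \eqref{second} for the quadratic coefficient and \eqref{third} to bound the cubic (triple-collision) contribution,
\[
f(a):=\mathbb{E}[A_{m+1}-A_m\mid A_m=a]=a\,s_N-\binom{a}{2}\frac{\rho^2}{N}+(\text{lower-order terms}).
\]
Setting $f(a)=0$ pins down the deterministic equilibrium level $a^\ast\approx \tfrac{2s_N N}{\rho^2}$, which is exactly the Haldane value after dividing by $N$. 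The remaining work is to show that the \emph{stationary mean} $\mathbb{E}[A_{\rm eq}]$ sits at $a^\ast$ up to an additive error $o(s_N N)$.

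For part a), under the stronger assumption \eqref{sasymptotics} the level $a^\ast$ is of order at most $N^{1/3}$, which is small enough that the events spoiling a pure birth-death description — a single line branching into two or more extra lines, or two or more balls landing in an already-occupied box within one generation — are rare. I would construct an explicit coupling of $(A_m)$ with the line-counting process of the ancestral selection graph of the Moran model with $\gamma=\rho^2$, matching its single up-steps (rate $\approx a s_N$) against the branching and its single coalescences (rate $\approx \binom{a}{2}\tfrac{\rho^2}{N}$) against the collisions, and show the two processes stay equal throughout a time window long enough for both to equilibrate, the decoupling probability being $o(1)$ by virtue of \eqref{second}, \eqref{third} and the choice of window. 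Transferring the explicit Moran computation \eqref{HaldaneMoran} — where $B_{\rm eq}$ is a binomial conditioned not to vanish and $s_N\gg N^{-1}$ forces the conditioning factor to $1$ — then yields $\mathbb{E}[A_{\rm eq}/N]=\mathbb{E}[B_{\rm eq}/N]+o(s_N)=\tfrac{2s_N}{\rho^2}+o(s_N)$.

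For part b), in the wider window \eqref{sasymptoticsa} the level $a^\ast$ may be as large as $N^{1/2}$, so $H$ can be of that order, simultaneous events are no longer negligible, and the coupling breaks down; here I would argue directly via concentration (the content of Section~7). The plan is to use stationarity, $\mathbb{E}[f(A_{\rm eq})]=0$, together with the expansion above. Since $f$ is to leading order quadratic with $f''\equiv-\rho^2/N$, a second-order Taylor expansion gives $0=f(\mathbb{E}[A_{\rm eq}])-\tfrac{\rho^2}{2N}\mathrm{Var}(A_{\rm eq})+\cdots$, so it suffices to control $\mathrm{Var}(A_{\rm eq})$ and the upper tail of $A_{\rm eq}$: a fluctuation of order $\sqrt{a^\ast}$ contributes only an $O(1)$ shift to $\mathbb{E}[A_{\rm eq}]$, which is $o(s_N N)$ because $s_N\gg N^{-1}$. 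The moment bound \eqref{moments} enters precisely here — to keep the expansion of the coalescence step uniform when $H$ is large (so that high powers $\mathbb{E}[(W_1)^n]$ appear) and to supply the exponential-type tail estimates on the one-step increments needed for the concentration of $A_{\rm eq}$; this is the extra input not furnished by \eqref{second} and \eqref{third} alone.

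The main obstacle is the concentration result underlying part b): establishing that $A_{\rm eq}$ concentrates tightly around $a^\ast$ despite the nonlinear drift and the possibility of large multi-collision down-steps, with error terms that are genuinely $o(s_N N)$ uniformly over the admissible range of $s_N$. In part a) the analogous difficulty is quantitative rather than conceptual: one must exhibit a window that is simultaneously long enough for equilibration and short enough that the accumulated decoupling probability remains $o(1)$, which is exactly what the threshold $N^{-2/3}$ in \eqref{sasymptotics} buys.
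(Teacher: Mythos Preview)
Your proposal for part a) is essentially the paper's approach: couple the CASP with the MASP of the Moran model with $\gamma=\rho^2$ over a window of order $s_N^{-(1+\delta)}$ generations, show the decoupling probability is $o(1)$ under \eqref{sasymptotics}, and then transfer the explicit Moran equilibrium \eqref{HaldaneMoran} via Corollary~\ref{corfix}. The paper's coupling (Lemma~\ref{Lemma Coupling MASP,CASP}) allows the two processes to differ by $1$ rather than insisting on equality, but this is a technical detail and not a difference in strategy.

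For part b) your organisation differs from the paper's. You propose to work from the stationarity identity $\mathbb E[f(A_{\rm eq})]=0$, Taylor-expand the quadratic part of $f$ around $\mathbb E[A_{\rm eq}]$, and then control $\mathrm{Var}(A_{\rm eq})$ and the remainder. The paper instead never invokes this identity: it shows directly that $A_{\rm eq}$ lies in an interval $[a^\ast-N^{1-b-\alpha},\,a^\ast+N^{1-b-\alpha}]$ with probability $1-O((N^{1-2b+\eps})^{h_N})$, by proving (i) the CASP enters such a box in at most polynomial time from either boundary (Lemmata~\ref{Lemma Coming down} and~\ref{Lemma going up from 1}), and (ii) once there it does not exit up to any polynomial time (Proposition~\ref{Lemma exp leaving time}, via stochastic domination by auxiliary random walks $\mathcal A^u,\mathcal A^\ell$). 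The expectation then follows by sandwiching the stationary CASP between copies started from $N$ and from $1$. Your drift-equation route is legitimate, but it does not sidestep the hard work: to control the ``lower-order terms'' in $f$ uniformly you already need tail bounds on $A_{\rm eq}$, and to bound $\mathrm{Var}(A_{\rm eq})$ you effectively need the same concentration statement the paper proves --- at which point the expectation can be read off directly and the Taylor step becomes redundant. You correctly identify that \eqref{moments} is needed to bound the probability of large \emph{downward} jumps (Lemma~\ref{Lemma Log N Down}), where high moments of $W_1$ arise from many balls hitting the same box; this is exactly the paper's use of the condition.
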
		

	The proof of part a) will be given in Section \ref{Section DASP} and that of part b) in Section \ref{Sec7}. Here we give brief sketches of the proofs.
	{\color{black} Our method of proof of Theorem 	\ref{Haldane  Formel} employs Corollary \ref{corfix}, for part a) by a coupling of the Cannings ancestral selection process (CASP) $(A_m)$ and the corresponding ancestral selection process of the Moran model (MASP) $(B_r)$, and for part b) via  a concentration analysis of the equilibrium distribution of $(A_m)$. In both parts it is essential that $s_N$ is not too large; this is guaranteed by Conditions  \eqref{sasymptotics} and \eqref{sasymptoticsa}, respectively.
		As described in Section \ref{secCASP}, the size of $s_N$ is responsible for the size of the upward jumps of $(A_m)$, and thus        it turns out that the random variable $A_{\rm eq}^{(N)}$ depends on $s_N$ in a stochastically monotone way.

In particular, our analysis of $A_{\rm eq}^{(N)}$ which makes Corollary 3.3 applicable for the derivation of (3.10) relies on the assumption $s_N \ll N^{-1/2}$.
	 
	 Part a): Coupling of CASP and MASP\\
	  We know from Section \ref{Section Moran Model} that the asymptotics \eqref{Haldane} holds for the fixation probabilities $\pi_N^M$ (starting from a single beneficial mutant) in  a sequence of Moran$(N)$-models with neutral reproduction rate $\rho^2/2$ (or equivalently with pair coalescence probability $\rho^2$) and selection strength~$s_N$. 
	We show in Section \ref{Section DASP} that thanks to Conditions \eqref{second} and \eqref{third} and Assumption \eqref{sasymptotics}, we can couple $(A_m^{(N)})_{m\ge 0}$ and $(B_r^{(N)})_{r\ge 0}$  long enough to ensure $\EE{A_{\rm eq}^{(N)}}=\EE{B_{\rm eq}^{(N)}}(1+o(1))$, 
	  which proves Theorem \ref{Haldane  Formel}a). 
	 
			The relevance of Condition \eqref{sasymptotics} for our proof of part a) of Theorem 	\ref{Haldane  Formel} can heuristically be seen as follows. An inspection of the jump probabilities described in Section \ref{secCASP} shows that in a regime of negligible multiple collisions the quantity $a^{(N)}_{\rm eq} := N \frac{s_N}{\rho^2/2}$ is an asymptotic center of attraction for the dynamics of $A^{(N)}$.  Since $B^{(N)}$ is close to its equilibrium after a time interval of length $s_N^{-(1+\delta)}$ we require the coupling  of $A^{(N)}$ and $B^{(N)}$  to hold for $s_N^{-(1+\delta)}$ many generations. This works if within this time interval the number of potential ancestors $A_m^{(N)}$ makes at most jumps of size $1$ in each generation. 
 The number of potential parents decreases by $1$ if a single pair coalesces. Near  $a^{(N)}_{\rm eq} $ the number of pairs within  $A_{\rm eq}^{(N)}$ is of the order $(a^{(N)}_{\rm eq})^2$. Hence, the probability of a pair coalescence per generation is of the order $(a^{(N)}_{\rm eq})^2/N$, and the probability of more than a single pair coalescence per generation is of the order $(a^{(N)}_{\rm eq})^4/N^2$, or equivalently, of the order $N^2 s_N^4$, since this probability can be estimated by the probability for two pair coalescences, see Lemma \ref{Lemma jumpsize=1}. Analogously, the probability that the number of potential parents increases by more than $1$ is also of order $N^2s_N^4$. Hence, the probability that jumps at most of size 1 occur for $s_N^{-(1+\delta)}$ generations, that have to be considered, is of the order $(1- N^2 s_N^4)^{s_N^{-(1+\delta)}}$. This probability is high, if $N^2 s_N^{3-\delta}\ll 1$, which corresponds to the upper bound in \eqref{sasymptotics}.

	Part b): 
	Concentration analysis of the CASP equilibrium distribution\\ 
		To show that the expectation of the CASP in equilibrium is $\frac{2}{\rho^2} N s_N (1 +o(1))$, we show in Lemmata \ref{Lemma Coming down} and \ref{Lemma going up from 1} that the CASP needs an at most polynomially long time{\color{black}, i.e. a time of order $N^c$ for some $c >0$,} to enter a central region (i.e. an interval of moderately large size around the center of attraction) of the CASP from outside, but (as proved in Proposition \ref{Lemma exp leaving time}) does not leave this central region up to any polynomially long time with sufficiently high probability. For this purpose we couple $\mathcal{A}$ with a random walk that makes jumps only of limited size. To show that large jumps (upwards or downwards) are negligible, we make use of the assumption $s_N \ll N^{-1/2}$. The probability that the CASP in a state not too far from the ``center'' (that is in a state of the order of $N s_N$) makes a jump at least of size $h_N$ (with $1\ll  h_N \ll N s_N$) upwards can essentially be estimated by the probability 
		that at least $h_N$ individuals each give rise to at least two branches in the branching step (described in Section \ref{secCASP}). The probability that in the branching step an individual gives rise to at least two branches is $\approx s_N$. Thus the probability that for individuals $1, ..., h_N$ at least two  branches are generated at the branching step is $\approx s_N^{h_N}$.
		There are  $(N s_N)! /( N s_N -h_N)! = \Theta( (N s_N)^{h_N})$ possibilities to choose $h_N$ individuals out of $s_N N$ many individuals. Consequently, the probability for a jump of at least of size $h_N$ upwards can be estimated from above by $O(s_N^{h_N} (N s_N)^{h_N}) = O((N s_N^2) ^{h_N})$, which is of negligible size if $s_N \ll N^{-1/2}$. 
	}
	
	   {\color{black}To arrive at a similar lower bound for the probability of large jumps downwards Condition \eqref{moments} is applied. Higher moments of the weight $W_1$ have to be controlled in this case, since for a downwards jump several individuals have to choose the same parent(s), for more details see Lemma \ref{Lemma Log N Down}.} 
	
	\begin{remark}\label{remark36}
		\begin{itemize} 
		\item[a)] {\color{black}(Conditions on the weights)
			 To illustrate a hierarchy between the conditions on the weights, consider the class of examples indexed by $n=1,2,...$, where in each generation with probability $N^{-n+\frac{1}{2}}$ an uniformly chosen individuals gets weight $1$ and with probability $1-N^{-n+\frac{1}{2}}$ every individual has weight $\frac{1}{N}$. For $n=1$ neither Condition \eqref{second} nor Condition \eqref{Moehle} is  fulfilled, for $n= 2$ Conditions \eqref{second} and  \eqref{Moehle} are fulfilled but  Condition \eqref{third} is violated,  and for $n=3$ Conditions \eqref{second} and \eqref{third} are fulfilled but Condition $\eqref{moments}$ fails.\\
			 For examples of weights $(\mathscr W^{(N)})$ that fulfil the requirements of Theorem \ref{Haldane  Formel}b), see Lemma \ref{Lemma implying Conditions}.}			
	\item[b)] (Moderately strong selection and Galton-Watson approximations) A regime for which it is possible to derive \eqref{Haldane} by means of a Galton-Watson approximation is that of {\em moderately strong selection} $1\gg s_N \gg N^{-1/2}$. A proof of this assertion  (under  somewhat more restrictive moment conditions on the weights $\mathscr W$ than those in Theorem \ref{Haldane  Formel} b) ) is the subject of the paper \cite{BoGoPoWa2}; see also the discussion in the paragraph following \eqref{Haldane0} in the Introduction. Together with the approach of the present paper this  does not yet cover the case $s_N\sim  N^{-1/2}$; we conjecture that Haldane's formula is valid also for this particular exponent.
				
			Here is a quick argument which explains the relevance of the exponent $1/2$ as a border for the applicability of a Galton-Watson approximation. The beneficial type is with high probability  saved from extinction if the number of individuals of the beneficial type exceeds (of the order of) $s_N^{-1}$. Hence, for a proof via approximations with Galton-Watson processes one wants couplings of the CASP with GW-processes to hold until this number of beneficial individuals is reached. However, a Galton-Watson approximation works only until there is an appreciable amount of ``collisions'' between the offspring of the beneficial individuals in a branching step, since collisions destroy independence. By well-known ``birthday problem'' considerations, such an amount of collisions happens as soon as there are (of the order of) $N^{1/2}$ beneficial individuals in the population. Consequently, for the GW-approach we require $s_N^{-1} \ll N^{1/2}$.           
 \item[c)] (Possible generalisations) The introduced duality method for Cannings models with selection may well prove beneficial also in more general settings. The construction of the Cannings ancestral selection graph given in Section \ref{DASG} can for example also be carried out in a many-island situation,
 with migration between islands in discrete or continuous time. This should then lead to  generalizations of Theorems \ref{Theorem Duality} and \ref{Haldane  Formel}. Under Assumption \eqref{sasymptotics}, and with an appropriate scaling of the migration probabilities, one might expect that the Cannings ancestral selection graph is again close to the (now structured) Moran ancestral selection graph.
 \end{itemize}
	\end{remark}
	\blue{ 
	We conclude this section with the following lemma, which provides a class of examples of weights that fulfil Conditions \eqref{second}, \eqref{third} and \eqref{moments}, and whose proof will be given at the end of Section \ref{Sec7}.
}
	\begin{lemma}\label{Lemma implying Conditions}
		\blue{Consider the following choices of weight distributions $\mathscr{L}(\mathscr{W}^{(N)})$:
		\begin{itemize}
			\item[a)] (symmetric Dirichlet-weights)  $\mathscr L(\mathscr W^{(N)})$ is a Dirichlet($\alpha_N,\ldots, \alpha_N$)-distribution with
			\[1+1/\alpha_N = \rho^2+O(1/N)\mbox{ as } N\to \infty.\]
			\item[b)] (Dirichlet-type weights under an exponential moment condition) Let $\mathscr{W}^{(N)}$ be of the form 
			\begin{equation}\label{Dirtype}
			W^{(N)}_i = \frac{Y_i}{\sum_{\ell=1}^{N}Y_\ell} \mbox{ for } i= 1,..., N
			\end{equation}
			 where $Y_1, ..., Y_N$ are independent copies of a non-negative random variable $Y$ for which there exists a $c >0$ such that \quad i)\,  $\EE{\exp(cY)}< \infty$ \quad and \quad ii) \, $\EE{Y^{-c}} < \infty$. 
		\end{itemize}
		Then, in both cases a) and b), the  requirements of Theorem \ref{Haldane  Formel} b) are satisfied.}
		\end{lemma}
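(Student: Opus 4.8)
The plan is to verify, in each of the two cases, the three moment conditions \eqref{second} (with some $\rho\ge 1$), \eqref{third}, and \eqref{moments}; condition \eqref{ellN} is then satisfied by any admissible choice of the otherwise free sequence $(h_N)$, for instance $h_N=\lfloor\sqrt{\ln N}\rfloor$, so it imposes no extra restriction. In both cases the starting point is an expression for $\EE{(W_1^{(N)})^n}$, and the work lies in extracting the leading $N$-asymptotics together with the factorially controlled upper bounds demanded by \eqref{moments}.

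For part a) everything is explicit. Since $\mathscr L(\mathscr W^{(N)})$ is Dirichlet$(\alpha_N,\dots,\alpha_N)$, the marginal $W_1^{(N)}$ is Beta$(\alpha_N,(N-1)\alpha_N)$, whence
\begin{equation*}
\EE{(W_1^{(N)})^n}=\prod_{j=0}^{n-1}\frac{\alpha_N+j}{N\alpha_N+j}.
\end{equation*}
Evaluating this at $n=2$ and $n=3$ and inserting the hypothesis $1+1/\alpha_N=\rho^2+O(1/N)$ gives $\EE{(W_1^{(N)})^2}=\frac1N\cdot\frac{\alpha_N+1}{N\alpha_N+1}=\frac{\rho^2}{N^2}+O(N^{-3})$ and $\EE{(W_1^{(N)})^3}=O(N^{-3})$, which are exactly \eqref{second} and \eqref{third}. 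For \eqref{moments} I would bound each factor by $\frac{\alpha_N+j}{N\alpha_N}=\frac1N(1+j/\alpha_N)$ and use $1+j/\alpha_N\le 1+(n-1)/\alpha_N$ to obtain $\EE{(W_1^{(N)})^n}\le N^{-n}(1+(n-1)/\alpha_N)^n$. Since $\alpha_N$ converges to the positive constant $1/(\rho^2-1)$ (and $\alpha_N\to\infty$ when $\rho^2=1$), for $n\le 2h_N$ one has $1+(n-1)/\alpha_N\le Kh_N$ with $K=K(\rho^2)$ for all large $N$, which is \eqref{moments}.

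Part b) is the substantial case. With $S_N=\sum_{\ell=1}^N Y_\ell$ and $\mu=\EE{Y}\in(0,\infty)$ (finiteness from (i), positivity because (ii) forces $Y>0$ almost surely) we have $\EE{(W_1^{(N)})^n}=\EE{Y_1^n/S_N^n}$. Two ingredients drive the estimates. First, condition (i) yields $\EE{Y^n}\le n!\,c^{-n}\EE{e^{cY}}\le (Cn)^n$ and, via Cram\'er's bound, exponentially small deviations $\pp{|S_N-N\mu|\ge N\mu/2}\le e^{-c'N}$. Second, condition (ii) gives the power-law left tail $\pp{Y\le t}\le t^c\EE{Y^{-c}}$, whence $\pp{S_N\le t}\le\prod_{i=1}^N\pp{Y_i\le t}\le (Ct^c)^N$; this makes $\EE{S_N^{-n}}$ finite for $N>n/c$ and, after splitting off the bulk $\{S_N\ge N\mu/2\}$, of order $N^{-n}$. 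Granting these, $\rho^2:=\EE{Y^2}/\mu^2\ge1$ by Jensen, and for \eqref{second} I expand $S_N^{-2}=(N\mu)^{-2}(1+(S_N-N\mu)/(N\mu))^{-2}$ on $\{|S_N-N\mu|<N\mu/2\}$: the leading term contributes $\EE{Y^2}/(N\mu)^2=\rho^2/N^2$, the first- and second-order corrections involve $\EE{Y_1^2(S_N-N\mu)}=O(1)$ and $\EE{Y_1^2(S_N-N\mu)^2}=O(N)$ and hence enter only at order $N^{-3}$, while the complementary event contributes $o(N^{-3})$ by combining $e^{-c'N}$ with the $\EE{S_N^{-n}}$ bounds. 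The same expansion at leading order gives $\EE{(W_1^{(N)})^3}=\EE{Y^3}/(N\mu)^3+O(N^{-4})=O(N^{-3})$, i.e. \eqref{third}. Finally \eqref{moments} follows by splitting on $\{S_N\ge N\mu/2\}$, where $Y_1^n/S_N^n\le(2/(N\mu))^nY_1^n$ and $\EE{Y_1^n}\le(Cn)^n\le(2Ch_N)^n$ for $n\le 2h_N$, so that $\EE{(W_1^{(N)})^n}\le(Kh_N/N)^n$ with $K=2C/\mu$; the complementary event is removed by a Cauchy--Schwarz step against $\EE{Y_1^{2n}/S_N^{2n}}^{1/2}$ and the deviation bound, using $N^{2n}=e^{o((\ln N)^2)}\ll e^{c'N}$.

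The main obstacle I anticipate is the $O(N^{-3})$ \emph{precision} demanded by \eqref{second} in part b). Unlike \eqref{third} and \eqref{moments}, which require only leading-order or one-sided control, \eqref{second} needs the second-order term of the Taylor expansion of $1/S_N^2$ to be pinned down and the remainder to be controlled uniformly, including the interaction coming from $Y_1$ being one of the summands of $S_N$ and the rare but not a priori negligible event $\{S_N\ll N\mu\}$. This is exactly where both hypotheses must be used in tandem: (i) to control the bulk via exponential concentration and to bound $\EE{Y^n}$, and (ii) to guarantee that the inverse moments $\EE{S_N^{-n}}$ are of the right order $N^{-n}$, so that the small-$S_N$ contribution is provably of lower order.
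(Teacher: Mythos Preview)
Your proposal is correct. Part a) is essentially identical to the paper's argument: both compute the Beta moments of $W_1^{(N)}$ and read off \eqref{second}, \eqref{third}, \eqref{moments} directly.

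For part b) you take a genuinely different route on the key estimate \eqref{moments}. The paper first \emph{decouples} numerator and denominator by the crude bound $S_N\ge Y_2+\cdots+Y_N$, so that $\EE{(W_1^{(N)})^n}\le \EE{Y_1^n}\cdot\EE{(Y_2+\cdots+Y_N)^{-n}}$; the second factor is then controlled by Jensen's inequality applied to the concave logarithm, reducing to $(\EE{Y^{-n/(N-1)}})^{N-1}$, where condition (ii) enters through the finiteness of $\EE{\log Y}$ needed in the Taylor expansion. You instead split on the concentration event $\{S_N\ge N\mu/2\}$: on the bulk you bound $S_N^{-n}$ deterministically, and on the complement you use $W_1\le 1$ together with the Cram\'er lower-tail bound (which needs only $\mu>0$, not the full strength of (ii)). Your approach thus makes softer use of hypothesis (ii) and is arguably more elementary; the paper's Jensen trick is shorter once one accepts the decoupling step. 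For \eqref{second} and \eqref{third} the paper simply declares them ``straightforward'' with $\rho^2=\EE{Y^2}/\mu^2$, so your Taylor-expansion sketch actually supplies more detail than the original; note that the third-order remainder, which you do not spell out, is $O((N\mu)^{-5}\EE{Y_1^2|S_N-N\mu|^3})=O(N^{-7/2})$ and hence harmless.
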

\blue{ In \cite{BoGoPoWa2}, Haldane's formula was proved in the case of moderately strong selection for weights as in Lemma \ref{Lemma implying Conditions} b), with the condition $\EE{Y^{-c}} < \infty$ relaxed to $\PP(Y>0) = 1$. There, weights obeying \eqref{Dirtype} were termed  {\em of Dirichlet type}. Obviously, the Wright-Fisher model is a special case, with $Y\equiv 1$.}

	\section{The Cannings  ancestral selection graph. Proof of Theorem \ref{Theorem Duality}}\label{DASG}
	We now define the Cannings  ancestral selection graph, i.e.\ the graph of potential ancestors in a Cannings model with directional selection as announced in Section \ref{secCASP}. The final harvest of this section will be the proof of  Theorem \ref{Theorem Duality}. 
	
	While the branching-coalescing structure of the Moran ancestral selection graph  and the sampling duality stated in Section \ref{Section Moran Model} serve as a conceptual guideline, the ingredients of the graphical construction turn out to be quite different from the Moran case, not least because of the discrete generation scheme.  
	We first describe how, given $\mathscr W^{(g-1)}$ {\em and} the configuration of types of the individuals $(i,g-1)$ in generation $g-1$, the {\em parent} (as well as the type) of an individual $(j,g)$ is constructed from a sequence of  i.i.d.~uniform picks from the unit square.
After this we describe how, given $\mathscr W^{(g-1)}$ (and without prior knowledge of the type configuration in generation $g-1$), the just mentioned i.i.d.~uniform picks from the unit square lead to the {\em potential parents} of an individual $(j,g)$. The latter form a random subset of $[N]\times\{g-1\}$.
	
	To this purpose, as illustrated in Figure \ref{figgamma}, think of the two axes of the unit square as being partitioned in  two respectively $N$ subintervals. The two subintervals that partition  the horizontal unit interval   are  $[0,1-s_N]$ and 
	$(1-s_N,1]$. The $N$ subintervals of the vertical unit interval  have lengths $W_1^{(g-1)}, \ldots, W_N^{(g-1)}$; we call these subintervals $\mathscr I_1^{(g-1)}, \ldots, \mathscr I_N^{(g-1)}$. Let  \[\mathscr B^{(g-1)}:=\{i\in [N]: (i,g-1)\mbox{ is of beneficial type}\}, \, \, \mathscr C^{(g-1)}:=\{i\in [N]: (i,g-1)\mbox{ is of wildtype}\},\]  and define
	\begin{equation}\label{Gjg}
	 \Gamma^{(g-1)} :=  \bigcup_{i\in \mathscr B^{(g-1)}} [0,1]\times \mathscr I_i^{(g-1)}   \cup  \bigcup_{i\in \mathscr C^{(g-1)}} [0,1-s_N]\times \mathscr I_i^{(g-1)}.
	\end{equation} 
\begin{definition} \label{transport} For fixed $j\in [N]$ and $g\in \mathbb Z$, 
		let $U^{(j,g,1)}, U^{(j,g,2)},\ldots$ be a sequence of independent uniform picks from $[0,1]\times [0,1]$ and put
	\begin{equation}
	\gamma(j,g):= \min\{\ell: U^{(j,g,\ell)}  \in \Gamma^{(g-1)}\}.
	\end{equation}	
		Given $ \mathscr B^{(g-1)}, \, \mathscr C^{(g-1)}, \mathscr W^{(g-1)}$ and $U^{(j,g,1)}, U^{(j,g,2)},\ldots$, there is a.s. a~unique $p(j,g)\in [N]$ for which $U^{(j,g,\gamma(j,g))} \in [0,1] \times \mathscr I_{p(j,g)}^{(g-1)}$. The  individual $(p(j,g),g-1)$ is defined to be the \emph{parent} of~$(j,g)$.
	\end{definition}
	Decreeing that individual $(j,g)$ inherits the type of its parent,  we obtain  that a.s.
	\begin{eqnarray} \label{trick1}
	\{(j,g)  \mbox{ is of wildtype}\} &:=& \{(p(j,g),g-1)  \mbox{ is of wildtype}\}\nonumber \\&=& \{U^{(j,g,\gamma(j,g))} \in  [0,1-s_N]\times\bigcup_{i\in \mathscr C^{(g-1)}} \mathscr I_i^{(g-1)}\}
	\end{eqnarray}
	We thus get the transport of $\mathscr C^{(g-1)}$ to the next generation $g$ by putting
\begin{equation}
	\mathscr C^{(g)} := \{j\in [N]: (j,g)\mbox{ is of wildtype}\}.
	\end{equation}
	\begin{remark}\label{CFP}
The $[N]$-valued process $|\mathscr C^{(g)}|$, $g=0,1,\ldots$ is a Cannings frequency process with parameters $N$, $\mathscr L(\mathscr W)$ and $s_N$, as defined in Section \ref{paintsel}. Indeed, 
given $ \mathscr C^{(g-1)}$ and $\mathscr W^{(g-1)}$, the random variables $U^{(j,g,\gamma(j,g))}$, $j=1,\ldots, N$, are independent and uniformly distributed on~$\Gamma^{(g-1)}$, hence \eqref{trick1} and the exchangeability of the components of  $\mathscr W^{(g-1)}$ implies that given $\{|\mathscr C^{(g-1)}|=k\}$ (and with an arbitrary allocation of these $k$ elements in the set $[N]$), the random variable   $|\mathscr C^{(g)}|$ has a mixed Binomial distribution with parameters~$N$ and $P(k, \mathscr W)$ specified by~\eqref{wildtypeprob}. 
\end{remark}	
	Let us now turn to a situation in which the  type configuration of the previous generation is not given, i.e. in which the sets $\mathscr B^{(g-1)}$ and $\mathscr C^{(g-1)}$ and hence also the set $\Gamma^{(g-1)}$ is not know a priori.
	\begin{figure}[ht]
		\centering
		\includegraphics{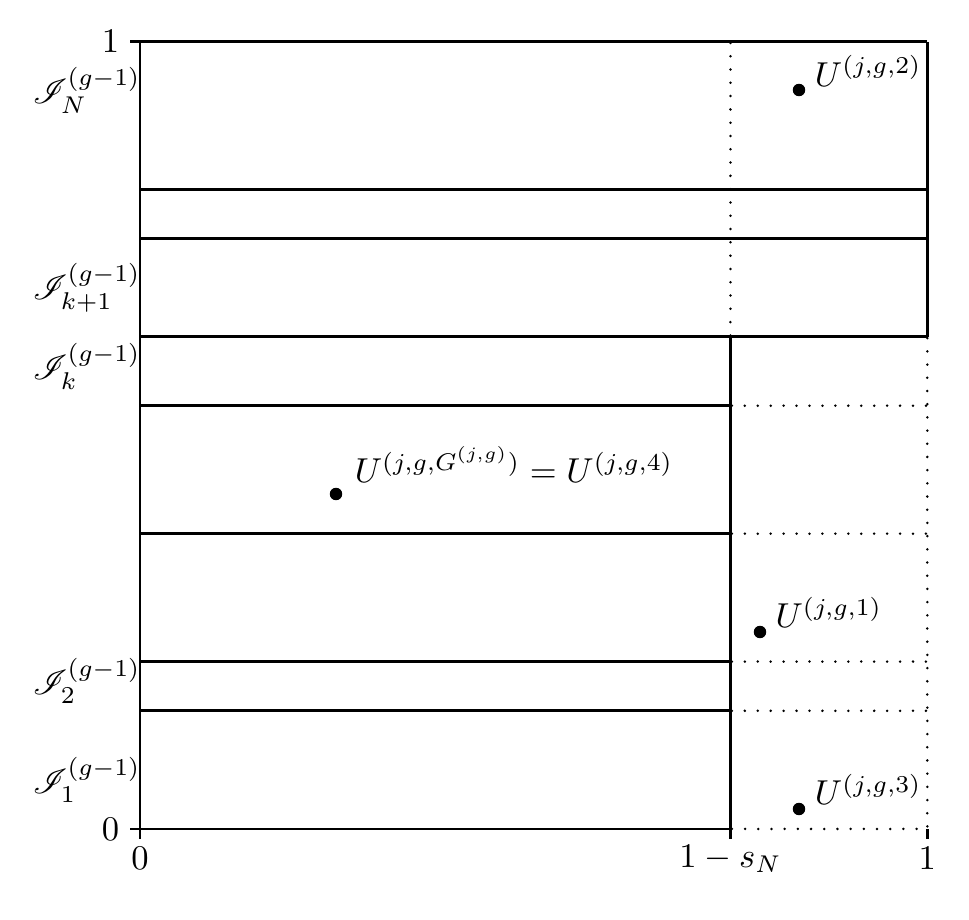}
			\caption {This figure illustrates a case in which  $\mathscr C^{(g-1)} = \{1,\ldots, k\}$,   $\mathscr B^{(g-1)} = \{k+1,\ldots, N\}$, $\gamma(j,g)=2$, $G(j,g)=4$. The individual $(j,g)$ is of beneficial type. Since in this example $\gamma(j,g)$ is strictly smaller than $G(j,g)$, the individual $(j,g)$ must be of beneficial type.} \label{figgamma}
	\end{figure}

	\begin{definition}\label{PASG}
		\begin{itemize}
			\item[i)] For fixed $j\in [N]$ and $g\in \mathbb Z$, 
		let $U^{(j,g,1)}, U^{(j,g,2)},\ldots$ be as in Definition 	\ref{transport}
	 and define
	\begin{equation}\label{geom}
	G(j,g):= \min\{\ell: U^{(j,g,\ell)}  \in [0,1-s_N]\times [0,1]\}.
	\end{equation}
	
			We call $(i,g-1)$ a \emph{potential parent} of $(j,g)$ if\, $U^{(j,g,\ell)} \in [0,1]\times  \mathscr I_i^{(g-1)}$ for some $\ell \le G(j,g)$. Similarly, we call $(i,g-2)$ a \emph{potential grandparent} of $(j,g)$ if $(i,g-2)$ is a potential parent of a potential parent of $(j,g)$. By iteration this extends to the definition of the set $\mathscr A^{(j,g)}_m$ of  \emph{potential ancestors} of  $(j,g)$ in generation $g-m$, $m \ge1$, with  $\mathscr A^{(j,g)}_0:= \{(j,g)\}$.
			\item[ii)]
			For a set $\mathcal J \subset [N]$ and for $m\ge 0$ let  $\mathscr A^{(\mathcal J,g)}_m := \bigcup_{j\in \mathcal J}  \mathscr A^{(j,g)}_m$ 
			be the set of  potential ancestors of $\mathcal J\times \{g\} $  in generation $g-m$.  
			Moreover, let $A^{(\mathcal J,g)}_m:= |\mathscr A^{(\mathcal J,g)}_m|$ 
			be the number of potential ancestors of $\mathcal J\times \{g\}$ in generation $g-m$. 
		\end{itemize}
	\end{definition}
	The  a.s. equality of events asserted in the following lemma is both crucial and elementary.
		\begin{lemma} For $j, g, U^{(j,g,1)}, U^{(j,g,2)}\ldots, \gamma(j,g)$ as in Definition \ref{transport} and $G(j,g)$ as in \eqref{geom},
	\begin{align}
	&\left\{U^{(j,g,\gamma(j,g))} \in [0,1-s_N] \times  \bigcup_{i\in \mathscr C^{(g-1)}}  \mathscr I_i^{(g-1)}\right\}\\
  &\qquad \stackrel{\rm a.s.}{=} \left\{U^{(j,g,1)}, \ldots, U^{(j,g,G(j,g))}\in [0,1]\times  \bigcup_{i\in \mathscr C^{(g-1)}}  \mathscr I_i^{(g-1)}\right\}.\label{trick2}
	\end{align}
	\end{lemma}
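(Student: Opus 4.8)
The plan is to show that the two events in \eqref{trick2} coincide off a null set by a short case analysis driven by the position of the pick $U^{(j,g,\gamma(j,g))}$ relative to the vertical line $\{x=1-s_N\}$. The geometric facts I would extract first are two. First, $[0,1-s_N]\times[0,1]\subseteq\Gamma^{(g-1)}$: every individual contributes at least its left strip $[0,1-s_N]\times\mathscr I_i^{(g-1)}$ to $\Gamma^{(g-1)}$ (wildtype individuals by the very definition \eqref{Gjg}, beneficial individuals because their whole strip $[0,1]\times\mathscr I_i^{(g-1)}$ already lies in $\Gamma^{(g-1)}$). Hence any pick landing in the left strip automatically lands in $\Gamma^{(g-1)}$, so $\gamma(j,g)\le G(j,g)$ almost surely. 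Second, for every index $\ell<G(j,g)$ the pick $U^{(j,g,\ell)}$ lies in the right strip $(1-s_N,1]\times[0,1]$, and intersecting $\Gamma^{(g-1)}$ with this right strip leaves exactly the beneficial intervals $\bigcup_{i\in\mathscr B^{(g-1)}}(1-s_N,1]\times\mathscr I_i^{(g-1)}$; thus, for such $\ell$, the relation $U^{(j,g,\ell)}\in\Gamma^{(g-1)}$ is equivalent to the vertical coordinate of $U^{(j,g,\ell)}$ lying in a beneficial interval, i.e.\ to its \emph{not} lying in $\bigcup_{i\in\mathscr C^{(g-1)}}\mathscr I_i^{(g-1)}$.

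Next I would split according to whether $\gamma(j,g)=G(j,g)$ or $\gamma(j,g)<G(j,g)$. If $\gamma(j,g)=G(j,g)$, then all picks with $\ell<G(j,g)$ avoid $\Gamma^{(g-1)}$, so by the second observation their vertical coordinates all lie in wildtype intervals; moreover at $\ell=G(j,g)$ the horizontal coordinate is automatically $\le 1-s_N$. The left-hand event of \eqref{trick2} then reduces to the single condition that the vertical coordinate of $U^{(j,g,G(j,g))}$ lies in $\bigcup_{i\in\mathscr C^{(g-1)}}\mathscr I_i^{(g-1)}$, and the right-hand event reduces to the same condition (its constraints for $\ell<G(j,g)$ being already met). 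So the two events agree. If instead $\gamma(j,g)<G(j,g)$, then $U^{(j,g,\gamma(j,g))}$ lies in the right strip and, being in $\Gamma^{(g-1)}$, has vertical coordinate in a beneficial interval. This forces the left-hand event to fail, because the horizontal coordinate exceeds $1-s_N$, and it also forces the right-hand event to fail, because for the index $\ell=\gamma(j,g)\le G(j,g)$ the vertical coordinate of $U^{(j,g,\gamma(j,g))}$ is not in a wildtype interval. Hence both events are false, and they agree again.

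Finally I would dispose of the almost-sure qualifier. The excluded configurations are those on which some pick lands exactly on the line $\{x=1-s_N\}$ or on a boundary between two consecutive intervals $\mathscr I_i^{(g-1)}$, each a Lebesgue-null event, together with $\{G(j,g)=\infty\}$, which has probability zero since the $U^{(j,g,\ell)}$ independently land in the left strip with probability $1-s_N>0$ at each step.

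I do not anticipate a genuine obstacle, as the assertion is elementary. The only point requiring care is the bookkeeping of which horizontal strip of $\Gamma^{(g-1)}$ sits over beneficial versus wildtype intervals, together with the observation, used in the case $\gamma(j,g)=G(j,g)$, that the horizontal constraint $x\le 1-s_N$ in the left-hand event is satisfied automatically precisely because the relevant index is $\ell=G(j,g)$.
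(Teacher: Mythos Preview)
Your proof is correct and follows essentially the same approach as the paper. The paper organizes the argument as two implications (l.h.s.\ $\Rightarrow$ r.h.s.\ and conversely), while you do an equivalent case split on $\gamma(j,g)=G(j,g)$ versus $\gamma(j,g)<G(j,g)$; both rest on the same two geometric observations, namely $[0,1-s_N]\times[0,1]\subseteq\Gamma^{(g-1)}$ (yielding $\gamma(j,g)\le G(j,g)$) and that a pick in the right strip $(1-s_N,1]\times[0,1]$ lies in $\Gamma^{(g-1)}$ if and only if its vertical coordinate is in a beneficial interval.
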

	\begin{proof}
To see that the l.h.s.~almost surely implies the r.h.s., consider the first pick that falls into the area $\Gamma^{(g-1)}$ and assume that it  lands in a horizontal stripe belonging to a wildtype individual in generation $g-1$. Then this must be also the first one of the picks that lands in $[0,1-s_N]\times [0,1]$, and no one of the preceding picks could have landed  in a horizontal stripe belonging to a beneficial individual in generation $g-1$. Conversely, to see that the r.h.s.~\eqref{trick2} a.s. implies the l.h.s., consider the first pick that lands in $[0, 1-s_N]\times [0,1]$. If all the picks up  to and including this pick have landed in a horizontal stripe belonging to a wildtype individual in generation $g-1$, then also the first pick that falls into the area $\Gamma^{(g-1)}$ must land in a horizontal stripe belonging to a wildtype individual in generation $g-1$.
\end{proof}
	Combining \eqref{trick1} and \eqref{trick2} with Definition \ref{PASG} we see that for all $g\in \mathbb N$ and all $\mathcal J\subset [N]$
	\begin{equation}\label{formal1}
	\{\mathcal J\subset \mathscr C^{(g)}\} \stackrel{\rm a.s.}{=} \{\mathscr A^{(\mathcal J,g)}_1\subset \mathscr C^{(g-1)}\}
	\end{equation}
	Iterating \eqref{formal1} we arrive at
	\begin{equation}\label{formal2}
	\{\mathcal J\subset \mathscr C^{(g)}\} \stackrel{\rm a.s.}{=} \{\mathscr A^{(\mathcal J,g)}_g\subset \mathscr C^{(0)}\}.
	\end{equation}	
	It is obvious that the random variables $G(j,g)$ defined in \eqref{geom} are independent of the $\mathscr W^{(g')}$, $g'\in \mathbb Z$, and have the property
	\begin{equation}\label{Gjggeom}
	G(j,g), \, g \in \mathbb Z, \, j\in  [N], \quad \mbox{ are independent and Geom}(1-s_N)\mbox{ distributed.} 
	\end{equation}
	This leads directly to the following observation on the number of potential ancestors.
	\begin{remark} \label{dynA}
		Let $g \in \mathbb Z$ and $\mathcal J \subset [N]$ be fixed.
		\begin{itemize}
			\item[i)] The process $|\mathscr A^{(\mathcal J,g)}_m|$, $m = 0, 1,\ldots$, is   a Cannings ancestral selection process (CASP) with parameters $N$, $\mathscr L(\mathscr W)$ and $s_N$, as defined in Section \ref{secCASP}. Indeed, each transition consists of a branching and a coalescence step, where only the latter depends on the $\mathscr W^{(g')}$, \mbox{$g'\in \mathbb Z$}. Specifically,  given $|\mathscr A^{(\mathcal J,g)}_m| = a$, let $H$ have a negative binomial distribution with parameters $a$ and $1-s_N$. Given $H=h$,   $|\mathscr A^{(\mathcal J,g)}_{m+1}|$ is distributed as the number of distinct outcomes in $h$ trials, which given $\mathscr W^{(g-m-1)}$ are independent and follow the  probability weights $\mathscr W^{(g-m-1)}$.
			\item[ii)] For  $m \ge 1$ the exchangeability of the components of $\mathscr W$ implies that,  given $|\mathscr A^{(\mathcal J,g)}_m| = a$, the set $\mathscr A^{(\mathcal J,g)}_m$ is a uniform pick of all subsets of $[N]$ of cardinality~$a$. 
		\end{itemize}
	\end{remark}
	\noindent
	{\em Proof of Theorem \ref{Theorem Duality}}. Let $k$, $n$ and $\mathcal J_n$ be as in Theorem \ref{Theorem Duality}. In  \eqref{formal2} we choose $\mathcal J := \mathcal J_n$ and $\mathscr C_0:= [k]$. Then
	\[\mathbb P(\mathcal J_n \subset [K_g] \mid K_0=k) = \mathbb P(\mathcal J_n\subset \mathscr C^{(g)})= \mathbb P( \mathscr A^{(\mathcal J_n,g)}_g\subset \mathscr C^{(0)}) =        \mathbb P(\mathscr  A_g \subset [k] \mid A_0 = n),\]
where the first equality follows from Remark \ref{CFP}, the second one from \eqref{formal2} and the third one from Remark \ref{dynA}. $\Box$

	Another consequence of~\eqref{formal2} together with Remark \ref{dynA} is the following moment duality, which is interesting in its own right, not least because this was the route through which \cite{GS}  discovered the ``discrete ancestral selection graph'' in the ``quasi Wright-Fisher case'', i.e.\ for $\PP( W_1  = \cdots = W_N)\to 1$.
	\begin{corollary} Let $(K_g)$ and  $(A_m)$ be as in Section \ref{DCA}, let $k, n \in [N]$ and assume that the number of wildtype individuals in generation~$0$ is $k$. Then the probability that a sample of $n$ individuals taken in generation $g \ge 1$ consists of wildtype individuals only is
		\[\mathbb E\left[ \left(\sum_{i=1} ^{K_{g-1}} W_i\right)^{\sum\limits_{j=1}^{n} G^{(j)}} \mid  K_0 = k\right] =  \mathbb E\left[ \left(\sum_{i=1} ^k W_i\right)^{\sum\limits_{j=1}^{A_{g-1}} G^{(j)}} \mid A_0 = n\right],\]
		where $G^{(1)}, G^{(2)}, \ldots$ are independent and Geom$(1-s_N)$-distributed.
	\end{corollary}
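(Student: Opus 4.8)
The plan is to show that both expressions in the asserted identity equal one and the same quantity, namely the probability $\mathbb P(\mathcal J\subset \mathscr C^{(g)})$ that a fixed set $\mathcal J\subset[N]$ of $n$ individuals (sampled in generation $g$) is entirely of wildtype, given $\mathscr C^{(0)}=[k]$; by exchangeability this probability does not depend on the choice of $\mathcal J$. The two expressions will then arise by decomposing this probability at the two opposite ends of the time interval $\{0,\ldots,g\}$ --- the left-hand side by peeling off the last reproduction event, the right-hand side by peeling off the first one along the dual.

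The engine of both decompositions is the following one-step computation. Fix a generation $g'$ and condition on the weights $\mathscr W^{(g'-1)}$ and on the wildtype set $\mathscr C^{(g'-1)}$; write $p:=\sum_{i\in\mathscr C^{(g'-1)}} W_i^{(g'-1)}$ for the total weight of the wildtype individuals in generation $g'-1$. By \eqref{trick1} together with \eqref{trick2}, an individual $(j,g')$ is of wildtype exactly when the first $G(j,g')$ of its picks all land in the horizontal stripe $[0,1]\times\bigcup_{i\in\mathscr C^{(g'-1)}}\mathscr I_i^{(g'-1)}$. Since the vertical coordinates of the picks are i.i.d.\ uniform and independent of the horizontal coordinates (which alone determine $G(j,g')$ through \eqref{geom}), each such pick lands in that stripe with vertical probability $p$, so conditionally on $G(j,g')$ the probability that $(j,g')$ is of wildtype equals $p^{G(j,g')}$. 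As the picks of distinct individuals are independent, for any set $\mathcal D\subset[N]$ of $d$ individuals in generation $g'$ we obtain, using \eqref{Gjggeom},
\begin{equation*}
\mathbb P\bigl(\mathcal D\subset\mathscr C^{(g')}\mid \mathscr W^{(g'-1)},\mathscr C^{(g'-1)}\bigr)=\mathbb E\Bigl[p^{\sum_{j=1}^{d} G^{(j)}}\,\Big|\,\mathscr W^{(g'-1)},\mathscr C^{(g'-1)}\Bigr],
\end{equation*}
where $G^{(1)},G^{(2)},\ldots$ are independent Geom$(1-s_N)$ variables independent of $(\mathscr W^{(g'-1)},\mathscr C^{(g'-1)})$.

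For the left-hand side I would apply this with $g'=g$ and $\mathcal D=\mathcal J$ (so $d=n$). Since $\mathscr W^{(g-1)}$ is independent of $\mathscr C^{(g-1)}$ --- the latter being built from the weights of generations $0,\ldots,g-2$ --- the exchangeability of the components of $\mathscr W^{(g-1)}$ lets me replace $p=\sum_{i\in\mathscr C^{(g-1)}}W_i^{(g-1)}$ by $\sum_{i=1}^{K_{g-1}}W_i$ in distribution jointly with $K_{g-1}=|\mathscr C^{(g-1)}|$. Taking expectations and invoking Remark \ref{CFP} (which identifies $|\mathscr C^{(\cdot)}|$ with the Cannings frequency process) yields the left-hand expression. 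For the right-hand side I would instead use the pathwise duality: iterating \eqref{formal1} exactly $g-1$ times gives $\{\mathcal J\subset\mathscr C^{(g)}\}\stackrel{\mathrm a.s.}{=}\{\mathscr A^{(\mathcal J,g)}_{g-1}\subset\mathscr C^{(1)}\}$, i.e.\ the sample is all wildtype iff its set of potential ancestors $\mathscr A^{(\mathcal J,g)}_{g-1}$ in generation $1$ is all wildtype. Now I apply the one-step computation with $g'=1$ and $\mathcal D=\mathscr A^{(\mathcal J,g)}_{g-1}$, whose cardinality is $A_{g-1}$ (with $A_0=n$) and which by Remark \ref{dynA} is generated only from picks and weights of generations $\ge 1$ and $\ge 2$ respectively, hence is independent of $\mathscr W^{(0)}$ and of the generation-$1$ picks. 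With $\mathscr C^{(0)}=[k]$ the relevant total weight is $p=\sum_{i=1}^{k}W_i^{(0)}\stackrel{d}{=}\sum_{i=1}^{k}W_i$, independent of $A_{g-1}$, and the same conditioning argument produces the right-hand expression.

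The geometric-picks identity is routine; the only point demanding care --- and the main obstacle --- is the bookkeeping of conditional independence: in each decomposition one must verify that the generation of weights governing the final (resp.\ initial) reproduction event, $\mathscr W^{(g-1)}$ (resp.\ $\mathscr W^{(0)}$), together with the corresponding picks, is independent of the object playing the role of the sample, $\mathcal J$ (resp.\ $\mathscr A^{(\mathcal J,g)}_{g-1}$). Only then is the exchangeability reduction to $\sum_{i=1}^{K_{g-1}}$ (resp.\ $\sum_{i=1}^{k}$) legitimate and may the Geom$(1-s_N)$ variables be taken fresh. Once this is in place, both expressions equal $\mathbb P(\mathcal J\subset\mathscr C^{(g)}\mid\mathscr C^{(0)}=[k])$, and the identity follows.
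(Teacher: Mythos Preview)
Your argument is correct and matches the paper's intended route: the paper states the corollary as ``another consequence of~\eqref{formal2} together with Remark~\ref{dynA}'' without further detail, and what you have written is precisely the natural unpacking of that hint. Your symmetric device --- computing $\mathbb P(\mathcal J\subset\mathscr C^{(g)})$ by one forward reproduction step at each end of the interval, once at $g'=g$ conditioning on $K_{g-1}$ and once at $g'=1$ after tracing potential ancestors back to generation~$1$ --- is exactly the mechanism encoded in \eqref{trick1}--\eqref{trick2} and \eqref{formal1}, and your independence bookkeeping (that $\mathscr A^{(\mathcal J,g)}_{g-1}$ uses only weights $\mathscr W^{(1)},\ldots,\mathscr W^{(g-1)}$ and picks in generations $2,\ldots,g$, hence is independent of $\mathscr W^{(0)}$ and the generation-$1$ picks) is the point that makes the exchangeability reduction legitimate.
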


 \section{Coupling of the Cannings and Moran ancestral selection processes. \,   Proof of   Theorem \ref{Haldane  Formel}a.}\label{Section DASP}
  In this section we provide a few lemmata preparing the proof of part a) of Theorem \ref{Haldane  Formel}, and conclude with the proof of that part. In particular, in Lemma \ref{Lemma Coupling MASP,CASP} we give a coupling of the Cannings ancestral selection process,  for short CASP, $(A_m)_{m\geq 0}$ defined  in Section \ref{secCASP}  and the Moran ancestral selection process, for short MASP, $(B_r)_{r\geq 0}$ whose jump rates we recalled in Section~\ref{Section Moran Model}.\\
Assume throughout that the $\Delta_N$-valued random weights $\mathscr W^{(N)}=(W_1^{(N)},\dots ,W_N^{(N)})$ fulfil  the Assumptions \eqref{second} and \eqref{third} required in the first part of Theorem \ref{Haldane  Formel}. Let $(s_N)_{N\geq 0}$ be a sequence in $(0,1)$ obeying \eqref{sasymptotics}. Frequently, we will switch to the notation 
\begin{equation}\label{stob}
b_N := -\frac{\ln s_N}{\ln N} \qquad \mbox{ or equivalently to } \qquad s_N = N^{-b_N}
\end{equation} 
with \eqref{sasymptotics} translating into
\[\frac 23 +\eta \le b_N \le 1-\eta.\]
For fixed $N$, and $j\in [N]$ let  
\begin{equation*}
G^{(j)} \mbox{ be independent and Geom}(1-s_N)\mbox{-distributed;}
\end{equation*}
  these will play the role of the random variables $G(j,g)$ defined in \eqref{Gjg}, see also \eqref{Gjggeom}. (Here and whenever there is no danger of confusion, we will suppress the superscripts $N$ and $g$.)

 \begin{lemma}[Moran-like transition probabilities of the CASP]\qquad \label{Lemma jumpsize=1}\\
 	Let $\varepsilon \in (0,\tfrac 16)$. The transition probabilities of the CASP $(A_m)_{m\geq 0}= (A^{(N)}_m)_{m\geq 0}$ obey, uniformly in $k\le N^{1-b_N+\varepsilon}$,
 \begin{align}
 	\PP(A_{m+1}=k\big|A_m=k)&=1-ks_N-\binom{k}{2}  \frac{\rho^2}{N} + O \left(k^4 N^{-2}+k^2s_N^2\right)\label{TransitionProbCASP1} \\
 	\PP(A_{m+1}=k+1\big|A_m=k)&=ks_N + O \left(k^2 s_N^2 +k^4 N^{-2}\right) \label{TransitionProbCASP2}\\
 	\PP(A_{m+1}=k-1\big|A_m=k)&=\binom{k}{2}  \frac{\rho^2}{N} + O \left(k^4 N^{-2}+k^2 s_N^2 \right)\label{TransitionProbCASP3} \\ 	
 	\PP(|A_{m+1}-k| \geq 2\big|A_m=k)&=O(k^4 N^{-2} +k^2 s_N^2 ). \label{TransitionProbCASP4}
 	\end{align}
 \end{lemma}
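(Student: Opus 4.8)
The plan is to split a single CASP step into its two constituent moves and to track the net displacement through two nonnegative integer random variables. Conditionally on $A_m=k$, write $H=\sum_{\ell=1}^k G^{(\ell)}$ for the outcome of the branching step (negative binomial with parameters $k$ and $1-s_N$), put $U:=H-k$ for the number of \emph{extra branches}, and let $D:=H-A_{m+1}$ be the number of \emph{collisions} produced when the $H$ balls are thrown into the $N$ boxes with weights $\mathscr W$. Then $A_{m+1}-k=U-D$, so each target event becomes a constraint on $(U,D)$: $\{A_{m+1}=k\}=\{U=D\}$, $\{A_{m+1}=k\pm1\}=\{U-D=\pm1\}$, and $\{|A_{m+1}-k|\ge2\}\subset\{U\ge2\}\cup\{D\ge2\}$, the last inclusion because $U\le1$ and $D\le1$ force $|U-D|\le1$. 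The branching step does not involve $\mathscr W$: since $\PP(G\ge j)=s_N^{\,j-1}$, a direct count gives $\PP(U=0)=(1-s_N)^k=1-ks_N+O(k^2s_N^2)$ and $\PP(U=1)=ks_N(1-s_N)^k=ks_N+O(k^2s_N^2)$, while bounding ``one individual contributes two extra branches'' by $ks_N^2$ and ``two individuals each contribute an extra branch'' by $\binom{k}{2}s_N^2$ yields $\PP(U\ge2)=O(k^2s_N^2)$. These expansions are legitimate on the stated range because $ks_N\le N^{1-2b_N+\varepsilon}\to0$ when $b_N\ge\tfrac23+\eta$ and $\varepsilon<\tfrac16$.

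The core is the coalescence step. Let $X$ denote the number of colliding pairs among the $H=h$ balls, so that $\{D=1\}=\{X=1\}$ and $\{D\ge2\}=\{X\ge2\}$. The only first-order statistic needed is the \emph{pairwise collision probability} $c_N:=\E[\sum_{i=1}^N W_i^2]=N\,\E[W_1^2]=\rho^2/N+O(N^{-2})$, which is exactly Assumption \eqref{second}; hence $\E[X\mid H=h]=\binom{h}{2}c_N=\binom{h}{2}\tfrac{\rho^2}{N}+O(h^2N^{-2})$. Using the Bonferroni-type identities $\PP(X=1)=\E[X]+O\big(\E[\binom{X}{2}]\big)$ and $\PP(X=0)=1-\E[X]+O\big(\E[\binom{X}{2}]\big)$, together with $\PP(X\ge2)\le\E[\binom{X}{2}]$, everything reduces to the single estimate $\E[\binom{X}{2}\mid H=h]=O(h^4N^{-2})$. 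Granting this, I obtain $\PP(D=1\mid H=h)=\binom{h}{2}\tfrac{\rho^2}{N}+O(h^4N^{-2})$, $\PP(D=0\mid H=h)=1-O(h^2/N)$, and $\PP(D\ge2\mid H=h)=O(h^4N^{-2})$.

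The main obstacle is precisely this bound on $\E[\binom{X}{2}]$, i.e.\ controlling a second collision using only the second and third moments. A pair of colliding pairs is of two kinds. If the two pairs share a ball, this is a triple collision, whose probability is at most $\binom{h}{3}\E[\sum_i W_i^3]=\binom{h}{3}\,N\,\E[W_1^3]=O(h^3N^{-2})$ by Assumption \eqref{third}. If the two pairs are disjoint, their joint collision probability given $\mathscr W$ equals $\big(\sum_i W_i^2\big)^2$, and here the Cauchy--Schwarz inequality
\[
\Big(\sum_i W_i^2\Big)^2=\Big(\sum_i W_i^{1/2}W_i^{3/2}\Big)^2\le\Big(\sum_i W_i\Big)\Big(\sum_i W_i^3\Big)=\sum_i W_i^3
\]
converts this fourth-order quantity back into a third moment, so that $\E[(\sum_i W_i^2)^2]\le\E[\sum_i W_i^3]=O(N^{-2})$; summing over the $O(h^4)$ disjoint pairs of pairs gives $O(h^4N^{-2})$. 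This Cauchy--Schwarz step is exactly what makes Assumption \eqref{third}, rather than an unavailable fourth-moment bound, sufficient, and it is the one place where some genuine care is required.

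Finally I would assemble the four transition probabilities by summing over $u=U$. For $\PP(A_{m+1}=k+1)$ the leading term is $\PP(U=1)\,\PP(D=0\mid H=k+1)=ks_N\big(1-O(k^2/N)\big)=ks_N+O(k^2s_N^2+k^3s_N/N)$, the contributions from $u\ge2$ being of smaller order; for $\PP(A_{m+1}=k-1)$ the leading term is $\PP(U=0)\,\PP(D=1\mid H=k)=\big(1-ks_N+O(k^2s_N^2)\big)\big(\binom{k}{2}\tfrac{\rho^2}{N}+O(k^4N^{-2})\big)$. In both cases the cross term $k^3s_N/N$ is absorbed into the stated error by the elementary inequality $k^3s_N/N\le\tfrac12\big(k^4N^{-2}+k^2s_N^2\big)$, which yields \eqref{TransitionProbCASP2} and \eqref{TransitionProbCASP3}; estimate \eqref{TransitionProbCASP4} follows from $\PP(U\ge2)+\PP(D\ge2)=O(k^2s_N^2+k^4N^{-2})$, and \eqref{TransitionProbCASP1} then follows by subtracting the other three from $1$. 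Uniformity over $k\le N^{1-b_N+\varepsilon}$ is automatic, since every error bound is a fixed polynomial in $k$, $s_N$ and $1/N$.
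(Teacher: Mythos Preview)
Your proof is correct and follows the same decomposition as the paper's: split the step into branching and coalescence, expand the negative-binomial tail for the branching, and control the collision count $X$ in the coalescence via its first two factorial moments. The only noteworthy difference is organisational. The paper obtains the lower bound on $\PP(X>0)$ via the second-moment method $\PP(X>0)\ge\E[X]^2/\E[X^2]$ and bounds $\PP(X\ge2)$ separately by a direct union bound, whereas you package both into the single estimate $\E[\tbinom{X}{2}]=O(h^4N^{-2})$ via Bonferroni inequalities; since $\E[X^2]=\E[X]+2\E[\tbinom{X}{2}]$, these are equivalent.

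Your treatment of the disjoint-pair term is in fact more explicit than the paper's. The paper asserts $\E[X_{1,2}X_{3,4}]=O(N^{-2})$ citing only the pair and triple coalescence probabilities, which by themselves do not immediately yield a bound on $\E[(\sum_i W_i^2)^2]$; your Cauchy--Schwarz step $(\sum_i W_i^2)^2=(\sum_i W_i^{1/2}W_i^{3/2})^2\le\sum_i W_i^3$ is precisely the missing justification and makes transparent why the third-moment Assumption~\eqref{third} alone suffices. Your AM--GM absorption of the cross term $k^3s_N/N$ into $k^4N^{-2}+k^2s_N^2$ is also slightly cleaner than the paper's route, which instead invokes the range restriction $k\le N^{1-b_N+\varepsilon}$ to argue $s_Nk=o(1)$.
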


\begin{remark}
	For $k=2$ we have by \eqref{TransitionProbCASP3} and \eqref{second}
	\begin{align}
\PP(A_{m+1}=k-1|A_m=k)=\EE{\sum_{i=1}^N \frac{W_i^2}{N}}=\frac{\rho^2}{N} + O(N^{-2}),
	\end{align}
	where the first term on the r.h.s.~is the pair coalescence property of the neutral Cannings coalescent with the paintbox $\mathscr W$.
\end{remark}

\begin{proof}[Proof of Lemma \ref{Lemma jumpsize=1}]
Recall that each transition of the CASP consists of a branching and a coalescence step.
	To arrive at the transition probabilities \eqref{TransitionProbCASP1} - \eqref{TransitionProbCASP4} we first estimate the probabilities that $k$ individuals give rise to a total of $k,k+1$ or more than $k+1$ branches and then analyse the probabilities that a single individual is chosen multiple times as a parent. 
	\\
	 Since each individual has a Geom$(1-s_N)$-distributed number of branches,
the probability that $k$ individuals give rise to a total of $k$ branches in the branching step is
 \begin{equation}\label{ProbabilityNoBranching}
 \PP \left( \sum_{j=1}^k G^{(j)}=k\right) = (1-s_N)^k=1-ks_N + O \left(k^2 s_N^2\right) 
 \end{equation}
 and the probability that the individuals give rise to $k+1$ branches is
 \begin{equation}\label{Probability branching size 1}
 \PP \left( \sum_{j=1}^k G^{(j)} =k+1\right)= k (1-s_N)^ks_N = ks_N + O\left(k^2 s_N^2 \right). 
 \end{equation}

 Adding the probabilities in \eqref{ProbabilityNoBranching} and \eqref{Probability branching size 1} yields 
 \begin{align*}
 \PP \left( \sum_{j=1}^k G^{(j)} \geq k+2 \right)=O \left( k^2 s_N^2\right).
 \end{align*}
 
Let us now calculate the probabilities of collisions in a coalescence step, that is the probability that an individual is chosen as a potential parent more than once.
For two branches the pair coalescence probability $c_N$ is given by
\begin{align}
c_N=\EE{\sum_{i=1}^N W_i^2}=\frac{\rho^2}{N}+ O(N^{-2}) \label{pair coalescence prob.}.
\end{align}
In the same manner we obtain the probability for a triple collision as
\begin{align}
d_N=\EE{\sum_{i=1}^N W_i^3}=O(N^{-2}) \label{triple coalescence prob.}.
\end{align}
Using \eqref{pair coalescence prob.} and \eqref{triple coalescence prob.} we control the probability of the event $E$ that there are two or more collisions, with $k$ individuals before the coalescence step. There are two possibilities for this event to occur, either there is at least a triple collision or there are at least two pair collisions. This yields
\begin{align}
\pp{E} \leq \binom{k}{4} \frac{\rho^4}{N^2}+ O\left( \binom{k}{3} N^{-2} \right)  + O \left( k^4 N^{-3}\right)=O\left( k^4 N^{-2} \right). \label{First Moment method}
\end{align}
In order to estimate the probability of having exactly one collision we use the second moment method for the random variable $X=\sum_{i=1}^{k}\sum_{j>i}^{k}X_{i,j}$, where $X_{i,j}=\1_{\{\text{i and j collide}\}}$.
With \eqref{pair coalescence prob.} we get
\begin{align}
\EE{X}=\EE{\sum_{i=1}^{k}\sum_{j>i}^{k}X_{i,j}}=\binom{k}{2} \frac{\rho^2}{N} +O(k^2 N^{-2}). \label{First Moment Jumps}
\end{align}
Furthermore, the second moment of $X$ can be written again due to \eqref{pair coalescence prob.} and \eqref{triple coalescence prob.} as
\begin{align*}
\EE{X^2}&= \EE{\left(\sum_{i=1}^{k}\sum_{j>i}^{k}X_{i,j} \right)^2 } \\ &= \binom{k}{2} \left( \frac{\rho^2}{N}+O(N^{-2}) \right) + O\left( k^3 \EE{X_{1,2} X_{2,3} }\right)+O\left(k^4 \EE{X_{1,2} X_{3,4}}\right) \notag \\
&=\binom{k}{2} \frac{\rho^2}{N}+ O(k^3 N^{-2}) + O(k^4 N^{-2})=\binom{k}{2} \frac{\rho^2}{N} + O(k^4 N^{-2}) 
\end{align*}
This together with \eqref{First Moment Jumps} yields
\begin{align}
\pp{X>0} \geq \frac{\EE{X}^2}{\EE{X^2}} = \frac{\left( \binom{k}{2} \frac{\rho^2 }{N}\right)^2+O\left( k^4N^{-3}\right)}{\binom{k}{2} \frac{\rho^2 }{N} + O(k^4 N^{-2})}= \binom{k}{2} \frac{\rho^2 }{N}(1- O(k^2 N^{-1})),
\end{align}
where the first inequality follows by applying the Cauchy-Schwarz inequality to $X$ and $I_{\{X>0\}}$.
Together with \eqref{First Moment method} we obtain for the random variable $X$ which counts the number of collisions (for $k$ individuals before the coalescence step)
\begin{align*}
&\pp{X=0}=1-\binom{k}{2} \frac{\rho^2 }{N} +O(k^4 N^{-2}) \\
&\pp{X=1}=\binom{k}{2} \frac{\rho^2 }{N} +O(k^4 N^{-2}) \\
&\pp{X\geq 2}=\pp{E} = O(k^4 N^{-2})
\end{align*}
Let $H:= \sum_{j=1}^{A_g} G^{(j)}$.  Then the above calculations allow us to obtain 
\eqref{TransitionProbCASP1}:
 \begin{align*}
 \PP(A_{m+1}=k+1|A_m=k)&= \PP(A_{m+1}= k+1 | A_m =k, H= k) \PP(H=k| A_m = k ) \\
     &\qquad +  \PP(A_{m+1}=k+1| A_m=k, H= k+1) \PP(H=k+1|A_m=k )		\\
 	&\qquad+ 	\PP(A_{m+1}=k+1| A_m=k, H\geq  k+2) \PP(H\geq k+2|A_m=k ) \\
&= \left(1- \binom{k+1}{2} \frac{\rho^2}{N} +O(k^4N^{-2}) \right)(ks_N +O(k^2 s_N^2))+O(k^2s_N^2)\\
&=ks_N +O\left( k^2 s_N^2 +s_N k^5 N^{-2}\right)
 \end{align*}
 
The remaining transition probabilities \eqref{TransitionProbCASP2} - \eqref{TransitionProbCASP4} are derived analogously.
\end{proof}

{\color{black}
Lemma \ref{Lemma CASP comes down} claims that the CASP comes down from $N$ to (the still large state) $N^{1-b+\varepsilon}$ within a time interval of length $o(N^b)$ with a probability that converges quickly to 1 for $N\rightarrow \infty$.   
The proof relies on the following Lemma \ref{WFext}. Therein, we show that with respect to coalescence the ancestral selection process of the Wright-Fisher model is extremal among the CASPs, in the sense that for this process the number $C_k$ of distinct occupied boxes after the coalescing half-step is stochastically the largest.}

\begin{lemma}\label{WFext}
For natural numbers $N$ and $k$ let $Z_1, Z_2 \ldots, $ be i.i.d. $[N]$-valued random variables with $w_i:= \mathbb P(Z_1=i)$, $i\in [N]$. Then for each $k\in \mathbb N$ the random variable $C_k := |\{Z_1, \ldots, Z_k\}|$ is stochastically largest for $w_1=\cdots  = w_N=\tfrac 1N$. 
\end{lemma}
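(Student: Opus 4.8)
The plan is to reduce the claim to a single elementary balancing step and then invoke a classical majorization fact. Recall that the uniform vector $u:=(\tfrac1N,\dots,\tfrac1N)$ is majorized by every probability vector $w$ on $[N]$ (the sum of the $m$ largest entries of $w$ is always at least $m/N$). By the Hardy--Littlewood--P\'olya theorem, $u$ can therefore be obtained from any given $w$ by a finite sequence of $T$-transforms, i.e.\ moves that replace two coordinates $(w_i,w_j)$ by $(\lambda w_i+(1-\lambda)w_j,\,(1-\lambda)w_i+\lambda w_j)$ for some $\lambda\in[0,1]$ and leave all other coordinates unchanged; each such move produces a vector majorized by its predecessor. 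Since the stochastic order is transitive, it suffices to show that a single $T$-transform can only increase $C_k$ stochastically, and then to apply this along the sequence.

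Fix weights $w$, single out the two coordinates affected by the transform, say boxes $1$ and $2$, and write $p:=w_1+w_2$ and $q:=w_1/p$ for their conditional split. Let $w'$ be the transformed vector; its conditional split on the same pair is $q':=\lambda q+(1-\lambda)(1-q)$, while $w'_\ell=w_\ell$ for $\ell\ge 3$ and $w'_1+w'_2=p$. I would couple the two allocation experiments as follows. For each of the $k$ samples use common randomness to decide first whether it lands in the block $\{1,2\}$ (probability $p$ in both experiments) or in a specific box $\ell\ge 3$ (conditional probabilities $w_\ell/(1-p)$, identical in both). This makes the set $S$ of samples falling into $\{1,2\}$, its cardinality $L:=|S|$, and the entire occupancy pattern of boxes $3,\dots,N$ identical under the coupling; call $D$ the (common) number of occupied boxes among $3,\dots,N$. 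Within $S$ allocate each sample independently to box $1$ with probability $q$ (resp.\ $q'$). Then $C_k=D+T$ and $C_k'=D+T'$, where $T,T'\in\{0,1,2\}$ count the occupied boxes among $\{1,2\}$.

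It remains to compare $T$ and $T'$ given the common randomness. Conditionally on $L$, one has $T=T'=0$ exactly when $L=0$, while for $L\ge1$ at least one of the two boxes is touched, so $T,T'\in\{1,2\}$ with
\[
\PP(T=2\mid L)=1-q^{L}-(1-q)^{L},\qquad \PP(T'=2\mid L)=1-(q')^{L}-(1-q')^{L}.
\]
The function $g(x):=x^{L}+(1-x)^{L}$ is convex and symmetric about $x=\tfrac12$; since $q'$ is a convex combination of $q$ and its reflection $1-q$, convexity and $g(1-q)=g(q)$ give $g(q')\le\lambda g(q)+(1-\lambda)g(1-q)=g(q)$. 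Hence $\PP(T'=2\mid L)\ge\PP(T=2\mid L)$ while $\PP(T'\ge1\mid L)=\PP(T\ge1\mid L)$, so $T'\ge_{\mathrm{st}}T$ given $L$. As $D$ is a common constant given the region decisions and is independent of the within-block allocations, $D+T'\ge_{\mathrm{st}}D+T$ conditionally, and averaging over the shared region randomness preserves the stochastic order, yielding $C_k'\ge_{\mathrm{st}}C_k$. Chaining over the $T$-transforms that carry $w$ to $u$ gives the assertion.

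The genuinely substantive point is the convexity--symmetry comparison of the two-box occupancy probabilities; everything else is bookkeeping around a coupling engineered to freeze both the ``outside'' contribution $D$ and the block size $L$, so that the comparison collapses to the single-pair function $g$. I expect the only place demanding care is checking that the conditional stochastic dominance of $T'$ over $T$ survives the mixing over $D$ and $L$, which it does because the region decisions use identical randomness in the two experiments.
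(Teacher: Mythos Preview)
Your proof is correct and takes a genuinely different route from the paper's. The paper's argument is essentially a two-line appeal to the literature: it rewrites the occupancy problem as a coupon collector problem via the duality $\{C_k \ge \ell\} = \{T_\ell \le k\}$, where $T_\ell$ is the waiting time until $\ell$ distinct coupons are collected, and then cites a result of Anceaume, Busnel, Schulte-Geers and Sericola stating that $\PP(T_\ell \le k)$ is maximized by uniform weights. Your argument, by contrast, is self-contained: you exploit that the uniform vector is majorized by every probability vector on $[N]$, reduce via Hardy--Littlewood--P\'olya to a single $T$-transform on two coordinates, and then settle that elementary case by a coupling that freezes the outside occupancy $D$ and the block size $L$, leaving only the symmetric-convex comparison of $g(x)=x^L+(1-x)^L$. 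The paper's approach is shorter but outsources the substance to an external reference; your approach is longer but transparent, and in effect re-proves the cited theorem by identifying $\ell \mapsto \PP(C_k \ge \ell)$ as a Schur-concave function of the weight vector. Both are valid; yours has the advantage of explaining \emph{why} uniform weights are extremal rather than merely asserting it.
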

\begin{proof}
{\color{black}We interpret the random variables $C_k$ in terms of the ``coupon collector's problem''.} For $\ell \in [N]$ let $T_\ell := \min\{k: C_k \ge \ell\}$. Then we have the obvious identity
\[\{C_k \ge \ell\}= \{T_\ell \le k\}.\]
\cite[Theorem 2]{ABBS} states that $\mathbb P(T_\ell \le k)$ is largest for $w_1=\cdots  = w_N=\tfrac 1N$.
\end{proof}

The quantities $b$, $A_m$, $\tau$ appearing in the next lemma all depend on $N$; we will suppress this dependence in the notation.
\begin{lemma}[CASP coming down from huge to large] \label{Lemma CASP comes down}
	Let $(A_m)_{m \in \N_0}$ be a CASP, $0<\varepsilon <\frac{2}{3}$, $A_0 =N$ and denote by $\tau=\inf \left\lbrace m \geq 0 : A_m \leq N^{1-b+\varepsilon} \right\rbrace $ the first time the CASP crosses the level $N^{1-b +\varepsilon}$. Then there exists a $\delta >0$, such that for any constant $c >0$ 
	\begin{align}
	\PP\left( \frac{\tau}{N^b} >c\right)= O(\exp(- N^{\delta})). \label{equation coming down}
	\end{align}
\end{lemma}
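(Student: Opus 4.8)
The plan is to show that the CASP, when started from the maximal state $N$, descends to level $N^{1-b+\varepsilon}$ very quickly because at large states the downward (coalescence) drift is enormous compared to the upward (branching) drift. The essential point is that near the top the number of pairs is of order $N^2$, so the pair-coalescence probability per generation is of order $N^2\cdot \tfrac{\rho^2}{N}=\Theta(N)$, whereas each individual independently branches only with probability $s_N=N^{-b}$, so the expected upward push is at most of order $N s_N = N^{1-b}\ll N$. Thus the process contracts multiplicatively toward the bottom, and the number of generations needed is governed by the slowest phase, which occurs once $A_m$ has already shrunk to the vicinity of $N^{1-b+\varepsilon}$; there each step still removes a fraction of order $\tfrac{(N^{1-b+\varepsilon})^2}{N}=N^{1-2b+2\varepsilon}$ by coalescence against an upward drift of order $N^{1-b+\varepsilon}s_N=N^{1-2b+\varepsilon}$, so coalescence dominates and one expects the whole descent to take $o(N^b)$ generations.

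First I would set up a suitable comparison (coupling) that makes the coalescent half-step tractable uniformly in the current state. Here Lemma \ref{WFext} is the key tool: it says the Wright--Fisher paintbox ($w_i\equiv\tfrac1N$) maximizes the number $C_k$ of occupied boxes, i.e.\ \emph{minimizes} the amount of coalescence. Consequently, bounding the downward motion of a general CASP from below is delicate (the general paintbox coalesces \emph{more}), but bounding it from above — which is what we need to guarantee the process does \emph{not} fall too slowly — can be handled by comparison with the Wright--Fisher case, whose occupancy statistics are explicit. The strategy is therefore to dominate $A_{m+1}$ stochastically from above by the Wright--Fisher occupancy count applied to $H=\sum_{j=1}^{A_m}G^{(j)}$ balls, and then to show that even this largest-possible number of survivors contracts geometrically.

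The central estimate I would establish is a one-step contraction bound of the form
\begin{equation}
\EE{A_{m+1}\mid A_m=k}\le k\Bigl(1-c_1\tfrac{k}{N}\Bigr)+c_2\, k\, s_N
\end{equation}
valid for $k$ ranging from $N$ down to $N^{1-b+\varepsilon}$, for suitable constants $c_1,c_2>0$. The first term reflects coalescence via the pair-coalescence probability $\tfrac{\rho^2}{N}$ of \eqref{pair coalescence prob.}, the second the branching input. From this drift estimate, together with a concentration (e.g.\ Azuma--Hoeffding or a submultiplicative/supermartingale argument on $A_m$ or on a logarithmic transform of it), one obtains that $A_m$ halves in $O(N/k)$ generations while $k$ is large, so the cumulative time to reach $N^{1-b+\varepsilon}$ telescopes to a geometric sum dominated by its bottom scale, giving a total of order $\tfrac{N}{N^{1-b+\varepsilon}}=N^{b-\varepsilon}=o(N^b)$ generations. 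To upgrade the ``in expectation / with high probability'' statement to the stretched-exponential bound $O(\exp(-N^\delta))$ claimed in \eqref{equation coming down}, I would chop the descent into $O(\log N)$ successive multiplicative scales, on each of which the required number of steps is sharply concentrated (the per-step coalescence count being a sum of many weakly dependent indicators as in the second-moment computation of Lemma \ref{Lemma jumpsize=1}), apply a Chernoff/Azuma bound on each scale, and take a union bound; the slackness $N^{b-\varepsilon}$ versus the allowed budget $cN^b$ leaves a polynomial factor of room that converts into the exponent $N^\delta$.

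The main obstacle, I expect, is controlling the coalescence half-step \emph{uniformly over large $k$} with the correct $\Theta(k^2/N)$ strength while keeping the upward branching contribution genuinely subdominant. For $k$ close to $N$ the ``number of occupied boxes'' is a highly nonlinear function of $H$, and the naive pairwise estimate $\binom{k}{2}\tfrac{\rho^2}{N}$ used in Lemma \ref{Lemma jumpsize=1} overshoots $N$ and is therefore useless in this regime — one cannot simply extrapolate those transition probabilities, which were proved only for $k\le N^{1-b+\varepsilon}$. This is precisely why Lemma \ref{WFext} is needed: it reduces the top-of-the-range analysis to the Wright--Fisher occupancy distribution, for which $\EE{C_k}=N\bigl(1-(1-\tfrac1N)^{H}\bigr)$ is exact and the requisite contraction is transparent. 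Making the transition between the ``huge $k$'' regime (handled by occupancy asymptotics) and the ``large $k$'' regime (where the quadratic drift of Lemma \ref{Lemma jumpsize=1} applies) seamless, and verifying that the branching inflation $H-A_m$ never disrupts the contraction, is the technical heart of the argument.
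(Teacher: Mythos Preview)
Your plan is essentially the same as the paper's: reduce to the Wright--Fisher paintbox via Lemma \ref{WFext} (since WF minimizes coalescence and is therefore the slowest CASP to come down), derive a one-step contraction estimate of the form $\EE{A_{m+1}\mid A_m=k}\le k\bigl(1+O(s_N)\bigr)-c\,\tfrac{k^2}{N}$ directly from the exact WF occupancy formula $\EE{A_{m+1}\mid A_m=k,H}=N\bigl(1-(1-\tfrac1N)^H\bigr)$, and then iterate to conclude that the descent takes $o(N^b)$ generations.

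The only genuine difference is in how you extract the stretched-exponential bound $O(\exp(-N^\delta))$. You propose chopping into $O(\log N)$ dyadic scales and applying Azuma/Chernoff on each, which requires second-moment or bounded-difference control of the coalescence step at every scale. The paper uses a much lighter argument: from the contraction in expectation one gets $\EE{A_m\mid A_0=N}\le N^{1-b+\varepsilon'}$ for $m\asymp N^{b-\varepsilon'}\ln N$ (with $\varepsilon'<\varepsilon$), hence by Markov $\PP(A_m>N^{1-b+\varepsilon})\le N^{\varepsilon'-\varepsilon}$, and then one simply \emph{restarts} the argument $N^{\delta_1}$ times (if the process has not yet come down, the worst case is that it is back at $N$), obtaining $\PP(\tau>N^{\delta_1}\cdot N^{b-\varepsilon'}\ln N)\le (N^{\varepsilon'-\varepsilon})^{N^{\delta_1}}=O(\exp(-N^\delta))$. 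This restart trick avoids any concentration input beyond first moments and is considerably simpler than the scale-by-scale Chernoff scheme you outline; you may want to adopt it.
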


\begin{proof}
	The branching step of the CASP dynamics only depends on $s_N$ and neither on the distribution  nor the realization of $\mathscr W$, see Remark \ref{dynA} i). On the other hand, the coalescence step only depends on $\mathscr W$\!. By Lemma \ref{WFext} the size of this coalescence step is stochastically lower bounded by the corresponding step in an ancestral selection process of the Wright-Fisher model. 
	Thus, among all CASP's  with selective strength $s_N$, the CASP of the Wright-Fisher model with selection is the slowest to come down from $N$ to $N^{1-b+\varepsilon}$; therefore  we use the stopping time corresponding to the Wright-Fisher model as a stochastic upper bound for $\tau$. Consequently, we assume for the rest of the proof that $\mathscr{W} = (\frac{1}{N}, ..., \frac{1}{N})$.\\
	To show \eqref{equation coming down} we estimate $\EE{A_{m+1}|A_m=k}$ for $1 \leq k \leq N$.\\
	$A_{m+1}$ denotes the number of potential parents of $A_m$ individuals, that is 
	\begin{align*}A_{m+1} =\sum_{i=1}^N \1_{ \{ \text{ Individual } i \text{ is a potential parent of some of the } A_m \text{ individuals} \}}.\end{align*} 
	Let $H=\sum_{j=1}^{A_m} G^{(j)}$, with $G^{(j)}\sim \text{Geom}(1-s_N)$ and independent for $j \in [N]$. Then
	\begin{align*}
	\PP(\text{Individual $i$ is chosen as a potential parent}|A_m)=1-\EE{\left( 1- \frac{1}{N}\right)^H\Big|A_m}
	\end{align*} 
	for $i\in [N]$. Hence, for $k \geq 1$ and $x = \frac{1}{N(1-s_N)}$
	\begin{align}
	\EE{A_{m+1}|A_m=k}&=N\EE{\left(1-\left(1-\frac{1}{N}\right)^{H}\right)\Big{|}A_m=k} \notag \\
	&= N \left(1 - \left(1-\frac{1}{N(1-s_N + \frac{s_N}{N})}\right)^k\right) \label{probabnegbin} \\
	&\leq - N\left( k \ln(1-x) + \frac{(k \ln( 1-x))^2}{2} + 
	\frac{(k \ln( 1-x))^2 k \ln( 1-x) }{6}\right) \label{taylor} \\
	&\leq N\left( k x - \frac{(k x)^2 (1 -2 s_N)}{3} \right) \notag \\
	&= \frac{k}{1-s_N} -  \frac{k^2 (1-2s_N)}{3 N } \label{estimate expectation A_m} ,
	\end{align}
	where for \eqref{probabnegbin} we use the probability generating function of the  negative binomial distribution and for \eqref{taylor} we use an estimate for the remainder of the corresponding Taylor expansion.
	Let $0 < \eps' < \eps$. From \eqref{estimate expectation A_m} follows
	\begin{align*}
	\EE{A_{m+1} |A_m} &\leq \max \left\{ \frac{A_m}{1 -s_N} - \frac{A_m N^{1-b+\eps'} (1 -2s_N) }{ 3 N}, N^{1-b+\eps'} \right\} \\
	&=\max \{q_N A_m,N^{1-b+\eps'}\},
	\end{align*}
	with $q_N=\frac{1}{1-s_N} - \frac{N^{1-b+\eps'} (1-2s_N)}{3N}$. This yields 
	\begin{align}
	\EE{A_m|A_0=N} \leq \max \{ q_N^m N,N^{1-b+\eps'}\}.
	\end{align}
	For any $m \geq c_1 N^{b-\eps'} \ln N$, for some appropriate constant $c_1>0$ we have thus the estimate $\EE{A_m|A_0=N}\leq N^{1-b+\eps'}$.
	By Markov's inequality we obtain
	\begin{align}
	\PP( A_{c_1N^{b-\eps'} \ln N} > N^{1-b+\eps})\leq N^{\eps'-\eps}\to 0 
	\end{align}
	as $N \to \infty$. If $(A_m)_{m\geq0}$ did not reach $N^{1-b+\eps}$ after $c_1N^{b-\eps'} \ln N$ steps we can start the process in $N$ again and wait another $c_1N^{b-\eps'} \ln N$ steps and check whether the process did reach the level $N^{1-b+\eps}$.  By using this argument $N^{\delta_1}$ times this yields, for any $0 < \delta_1 < \eps'$, the following upper bound for the probability to stay above $N^{1-b+\eps}$ for the generations $m \le c_1  N^{b-\eps' +\delta_1}$:
	\begin{align*}
	\PP(A_m > N^{1-b+\eps} \text{ for } m \in \{0,...,c_1  N^{b-\eps' +\delta_1} \ln N \}  )  & \leq \PP(A_{c_1N^{b-\eps'} \ln N} > N^{1-b+\eps} )^{N^{\delta_1}} \\& \leq (N^{\eps'-\eps})^{N^{\delta_1}}.
	\end{align*}
	Since $\eps>\eps'$ and $N^{b-\eps' +\delta_1} < N^b$, we have  $(N^{\eps'-\eps})^{N^{\delta_1}} = O(\exp(-N^{\delta}))$ for some appropriate $\delta>0$ from which the assertion follows. 
\end{proof}
\noindent

\noindent From Lemma \ref{Lemma CASP comes down} we obtain the following corollary:
\begin{corollary}\label{Corollary Expectation DASG}
 	Let $(A_m)_{m \geq 0}$ be a CASP. 
	Then for any $m_0\ge 0$ there exists a $C>0$ such that for all $N\ge 1$ and  all $j \ge N^{1-b+\varepsilon}$ 
\begin{align*}
&\pp{A_{m_0+N^{b}} >j|A_0=N} \le C N^{1-b+\eps}/j,\\
&\EE{A_{m_0+N^{b}}|A_0=N}= O(\ln(N) N^{1-b+\eps}).
\end{align*}
 \end{corollary}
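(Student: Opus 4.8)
The plan is to derive Corollary \ref{Corollary Expectation DASG} from Lemma \ref{Lemma CASP comes down} together with the expectation estimate \eqref{estimate expectation A_m} established in its proof. The first statement is essentially a tail bound, and the natural route is through Markov's inequality once we control $\EE{A_{m_0+N^b}\mid A_0=N}$. So I would first establish the expectation bound (the second displayed inequality) and then read off the tail bound, or alternatively prove the tail bound directly and integrate. Let me sketch the expectation bound first, since it does the main work.

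First I would condition on the event $\{\tau \le c N^b\}$ that the CASP has come down to the level $N^{1-b+\varepsilon}$ by time $cN^b$ (for a suitable constant $c$; the choice $m_0$ just shifts the clock and is harmless). By Lemma \ref{Lemma CASP comes down} the complementary event has probability $O(\exp(-N^\delta))$, which is super-polynomially small and therefore utterly negligible against the polynomial target $N^{1-b+\varepsilon}$ even after multiplying by the trivial bound $A_m\le N$. On the event $\{\tau \le m_0 + N^b\}$, once the process is at or below $N^{1-b+\varepsilon}$, I would use the drift estimate \eqref{estimate expectation A_m}, namely $\EE{A_{m+1}\mid A_m=k}\le \frac{k}{1-s_N}-\frac{k^2(1-2s_N)}{3N}$, to argue that the process cannot climb much higher than its center of attraction. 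Concretely, the upward drift $\frac{k}{1-s_N}-k \approx k s_N$ is balanced against the downward quadratic term $\frac{k^2}{3N}$, so there is a stable equilibrium at $k$ of order $N s_N = N^{1-b}$; above the level $N^{1-b+\varepsilon}$ the negative quadratic term dominates and the process is pushed back. Iterating the one-step bound as in the proof of Lemma \ref{Lemma CASP comes down} (taking the maximum with the barrier $N^{1-b+\varepsilon}$) shows that after the process has come down, its expectation stays $O(N^{1-b+\varepsilon})$, and the extra logarithmic factor $\ln N$ in the statement absorbs the contribution from the at most $\sim N^b \ln N$ coming-down steps during which the expectation may still be as large as $N^{1-b+\varepsilon}$ per step in a telescoping sum, or equivalently the $\ln N$ slack already present in the coming-down time. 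This yields $\EE{A_{m_0+N^b}\mid A_0=N}=O(\ln(N)\,N^{1-b+\varepsilon})$.

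For the first (tail) inequality I would argue similarly but keep track of the level $j$. On the event $\{\tau\le m_0+N^b\}$ the process sits near its equilibrium of order $N^{1-b}$, so for $j\ge N^{1-b+\varepsilon}$ the event $\{A_{m_0+N^b}>j\}$ requires an atypically large excursion; combining the drift bound with Markov's inequality applied from the level $N^{1-b+\varepsilon}$ gives a bound of the form $C N^{1-b+\varepsilon}/j$, while the coming-down failure event again contributes only $O(\exp(-N^\delta))$, which is dominated by $N^{1-b+\varepsilon}/j$ for the relevant range of $j$ (note $j\le N$ so $N^{1-b+\varepsilon}/j\ge N^{-b+\varepsilon}$, which is only polynomially small). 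Thus the tail bound and the expectation bound both follow from the same two ingredients.

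The main obstacle I anticipate is making the ``stays near equilibrium after coming down'' part rigorous with the correct constants: the one-step drift estimate \eqref{estimate expectation A_m} controls the conditional expectation of the \emph{next} state, but to bound $\EE{A_{m_0+N^b}}$ one must propagate this through the supermartingale-type recursion without the conditional-Jensen subtleties introduced by the quadratic (non-linear) term. The clean way is to note that $\EE{A_{m+1}\mid A_m}\le \max\{q_N A_m, N^{1-b+\varepsilon}\}$ with $q_N<1$ bounded away from $1$ by a term of order $N^{-b+\varepsilon}$ once $A_m\ge N^{1-b+\varepsilon}$, exactly as in the proof of Lemma \ref{Lemma CASP comes down}; iterating and taking expectations (the max is a convex function but the barrier term is deterministic, so the recursion $\EE{A_{m+1}}\le \max\{q_N\EE{A_m}, N^{1-b+\varepsilon}\}$ holds by Jensen applied to the convex function $x\mapsto\max\{q_N x, b_0\}$) shows the expectation contracts geometrically to $O(N^{1-b+\varepsilon})$. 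The logarithmic factor is then the price of the $O(N^b\ln N)$ horizon and the coming-down contribution, and the whole argument is elementary once this recursion is set up carefully.
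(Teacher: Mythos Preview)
Your overall strategy---condition on the coming-down event from Lemma~\ref{Lemma CASP comes down}, bound the complementary event trivially, and control the process after it has come down via a drift argument---matches the paper's. But the step you single out as ``clean'' is in fact wrong: you claim the recursion $\EE{A_{m+1}}\le \max\{q_N\EE{A_m}, N^{1-b+\varepsilon}\}$ follows from Jensen applied to the convex function $x\mapsto\max\{q_N x, b_0\}$. Jensen's inequality for convex $f$ gives $\EE{f(X)}\ge f(\EE{X})$, not $\le$; from $\EE{A_{m+1}\mid A_m}\le \max\{q_N A_m, b_0\}$ you only obtain $\EE{A_{m+1}}\le \EE{\max\{q_N A_m, b_0\}}$, and Jensen then bounds the right-hand side from \emph{below}, which is useless. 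So the propagation of the one-step drift bound, which you correctly flagged as the main obstacle, is not handled by your argument.

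The paper avoids this entirely by a supermartingale argument rather than a recursion on expectations. From Lemma~\ref{Lemma jumpsize=1} one extracts a level $x_0\le N^{1-b+\varepsilon/2}$ above which the net drift is negative, so the stopped process $(A_{m\wedge\sigma})$ with $\sigma=\inf\{m:A_m\le x_0\}$ is a supermartingale; by the strong Markov property at $\tau$ this gives $\EE{A_{\tau+m'}}\le N^{1-b+\varepsilon}$ for every $m'\ge 0$ directly. The paper also reverses your order of the two statements: it proves the tail bound first (Markov's inequality on the event $\{\tau\le N^b\}$ with the supermartingale bound on $\EE{A_{N^b}\mid \tau\le N^b}$), and then obtains the expectation bound by writing $\EE{A_{N^b}}=\sum_{j\ge 1}\PP(A_{N^b}>j)$ and splitting the sum at $j=N^{1-b+\varepsilon}$. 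The factor $\ln N$ arises from the harmonic tail $\sum_{j=N^{1-b+\varepsilon}}^{N} N^{1-b+\varepsilon}/j$, not from the coming-down horizon as you suggest.
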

\begin{proof}
For simplicity assume that $m_0=0$, but the same proof works for any $m_0\in \N$ as $\pp{A_{m_0+N^b}>j|A_0=N} \leq \pp{A_{m_0+N^b}>j|A_{m_0}=N} $. Due to Lemma \ref{Lemma CASP comes down} for the stopping time $\tau=\inf \{m \geq 0: A_m \leq N^{1-b+\eps} \} $ it holds $\pp{\tau>N^{b}}= O(\exp(-N^{-\delta}))$, with $\delta $ as in Lemma \ref{Lemma CASP comes down}. By Lemma \ref{Lemma jumpsize=1} we can compare the jump probabilities and obtain that there exists some $x_0\leq N^{1-b+\eps/2}$ such that above $x_0$ the upward drift  is smaller than the downward drift. This yields that the process stopped in $x_0$ is a supermartingale. Consequently since $x_0< N^{1-b+\eps}$, we have for any $m' \in \N$ by the strong Markov property
\begin{align}
	\EE{A_{\tau+m'}} \leq N^{1-b+\eps}.
\end{align}
	Hence by Markov's inequality we obtain
\begin{align*}
	\pp{A_{N^b} >j|A_0=N}&\leq \pp{A_{N^{b}} >j|A_0=N,\tau \leq N^{b}}+\pp{\tau> N^b} \\
	&\leq\frac{\EE{A_{N^{b}}|A_0=N,\tau \leq N^{b} } }{j}+O(\exp(-N^{\delta }))\\
	& \leq \frac{\EE{A_\tau}}{j}+O(\exp(-N^{\delta})), 
\end{align*}
which shows the first part. For the second part observe that
\begin{align*}
&\EE{A_{N^{b}}|A_0=N} =\sum_{j=1}^{N} \pp{A_{N^{b}}>j|A_0=N} \\
&=\sum_{j=1}^{N^{1-b+\eps}} \pp{A_{N^{b}}>j|A_0=N}+\sum_{j=N^{1+b-\eps}}^{N} \pp{A_{N^{b}}>j|A_0=N} \\
&\leq N^{1-b+\eps} +\sum_{j=N^{1+b-\eps}}^{N} \frac{N^{1-b+\eps}}{j} = O( N^{1-b+\eps} \ln N).
\end{align*}
\end{proof}
The following three lemmata provide some properties about the Moran process and the coupling of a Moran process to a Moran process in stationarity. For the remainder of this section we will fix three constants
\begin{align}
\delta_1 \in (0,1), \, \, 0<\delta_3 < \delta_2/2. \label{Definition constants}
\end{align}
The role of $\delta_1$ will be to specify a region $\left[\frac{2s_N}{2s_N + \rho^2 }N (1-\delta_1),\frac{2s_N}{2s_N + \rho^2 }N (1+\delta_1) \right]$ around MASP's center of attraction. The constant $\delta_2$ will appear in factors $N^{\delta_2}$ that stretch some time intervals, and the constant $\delta_3$ will be an exponent in small probabilities $O(\exp(-N^{\delta_3}))$.
\begin{lemma}[MASP's hitting time of the central region] \label{Lemma MASP comes down from N,up from 1}
	Let $(B_r)_{r\geq 0}$ be a MASP started in some state $n\in[N]$ and let $T=\inf\{ r\geq 0:B_r \in [\frac{2s_N}{2s_N + \rho^2 }N (1-\delta_1),\frac{2s_N}{2s_N + \rho^2 }N (1+\delta_1) ] \}$. Then,
	\begin{align}
	\pp{T\leq N^{b+\delta_2}|B_0=n} =  1 - O(\exp( -N^{\delta_3})).
	\end{align}
\end{lemma}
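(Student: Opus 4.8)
The plan is to exploit that $(B_r)_{r\ge 0}$ is a birth--death chain whose drift points toward the central region from both sides: I would first bound the \emph{expected} time to reach the region uniformly in the starting state $n$ by $O(N^b\ln N)$, and then boost this to the claimed exponentially small failure probability by a restart argument based on the strong Markov property. Write $a^\ast:=\frac{2s_N}{2s_N+\rho^2}N$ for the centre and $m_\pm:=a^\ast(1\pm\delta_1)$ for the endpoints of the region $R:=[m_-,m_+]$. Since $s_N\to 0$ we have $a^\ast\sim\frac{2}{\rho^2}N^{1-b}\to\infty$, so $R$ has width $2\delta_1 a^\ast\to\infty$ and contains integers for all large $N$. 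As $(B_r)$ moves by steps of size $\pm1$ only, it cannot pass from one side of $R$ to the other without entering $R$; hence, starting from $n>m_+$ we have $T=\inf\{r:B_r\le m_+\}$, and starting from $n<m_-$ we have $T=\inf\{r:B_r\ge m_-\}$. I would treat these two one-sided crossing times (descent and ascent) separately.

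With $\lambda_k=ks_N\frac{N-k}{N}$ and $\mu_k=\frac{\rho^2}{N}\binom{k}{2}$ the generator $\mathcal L$ of $(B_r)$ applied to the identity is
\[
\mathcal L\,\mathrm{id}(k)=\lambda_k-\mu_k=\frac{k}{N}\Big(s_NN+\tfrac{\rho^2}{2}-\big(s_N+\tfrac{\rho^2}{2}\big)k\Big),
\]
which vanishes at $a^\ast+O(1)$, so that $\lambda_k-\mu_k\le -c_0\delta_1 N^{1-2b}$ for all $k\ge m_+$ (the downward drift only strengthening as $k$ grows), while $\lambda_k-\mu_k\ge c_0 k s_N$ for $1\le k\le\tfrac12 a^\ast$ and $\lambda_k-\mu_k\ge c_0\delta_1 N^{1-2b}$ for $\tfrac12 a^\ast\le k\le m_-$, for some constant $c_0=c_0(\rho^2)>0$. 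Next I would bound $\EE{T\mid B_0=n}$ by a Lyapunov/Dynkin argument, constructing $V\ge 0$ with $V\equiv 0$ on $R$ and $\mathcal L V\le -c_1<0$ off $R$, so that $\EE{T\mid B_0=n}\le V(n)/c_1$. For the descent a linear $V$ is too crude (it ignores the fast descent from high states and would give only $O(N^{2b})$); instead I take $V(k)\propto\frac{2N}{\rho^2}\big(\frac{1}{m_+}-\frac{1}{k}\big)$ on $\{k>m_+\}$, the discrete analogue of $\int^{k}\mathrm{d}s/|\lambda_s-\mu_s|$, whose leading contribution to $\mathcal L V$ is $(\lambda_k-\mu_k)V'(k)\approx-1$ with a negative second-order correction; this gives $\max_k V(k)=V(N)\le\frac{2N}{\rho^2 m_+}=O(N^b)$. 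For the ascent I take $V(k)\propto\frac{1}{s_N}\ln(m_-/k)$ on $\{k<m_-\}$, whose leading contribution is $-\frac{\lambda_k-\mu_k}{s_N k}\le -c_1$, and here $V(1)=O(\frac{1}{s_N}\ln N)=O(N^b\ln N)$ is the dominant term and the origin of the logarithm. Hence $\EE{T\mid B_0=n}\le C N^b\ln N$ uniformly in $n\in[N]$.

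It remains to boost this to the stated tail bound. Setting $T_0:=2CN^b\ln N$, Markov's inequality gives $\sup_n\pp{T>T_0\mid B_0=n}\le\tfrac12$. Since this bound is uniform in the starting state and $B$ stays on one side of $R$ until $T$, the strong Markov property applied at the times $T_0,2T_0,\dots$ yields $\pp{T>jT_0\mid B_0=n}\le 2^{-j}$ for every $j\ge 1$. Choosing $j=\lfloor N^{b+\delta_2}/T_0\rfloor=\Theta(N^{\delta_2}/\ln N)$ we obtain
\[
\pp{T>N^{b+\delta_2}\mid B_0=n}\le 2^{-j}=O\big(\exp(-N^{\delta_3})\big)
\]
for any $\delta_3<\delta_2$, in particular for the $\delta_3<\delta_2/2$ fixed in \eqref{Definition constants}; this is the assertion.

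The main obstacle is the uniform expected-hitting-time bound, and within it the verification of $\mathcal L V\le -c_1$ across the two interfaces with $R$, where $V$ meets its zero level and the drift is only of the borderline order $N^{1-2b}$: there one must check that the fixed width parameter $\delta_1$ makes the first-order drift term dominate the discrete second-order correction. (Equivalently, one may replace the Lyapunov step by the explicit birth--death expected-hitting-time formula, trading the interface bookkeeping for a longer but routine summation.)
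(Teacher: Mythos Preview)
Your proposal is correct and the restart argument at the end is exactly the device the paper also uses (for the ascent case), but the route to the uniform expected-hitting-time bound is genuinely different from the paper's.

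The paper does not work with Lyapunov functions on the MASP directly. Instead it sandwiches $(B_r)$ between two auxiliary \emph{linear} birth--death processes and analyses those. For the descent from above $m_+$ it couples $(B_r)$ with a process $(\overline{B}_r)$ having individual birth rate $\overline{\beta}=s_N$ and death rate $\overline{\alpha}=\tfrac{s_N\rho^2}{2s_N+\rho^2}(1+\delta_1)>\overline{\beta}$, and computes $\PP(\overline{\tau}_0\ge r\mid \overline{B}_0=1)$ explicitly by solving the Riccati ODE $f'=(\overline{\beta}-\overline{\alpha})f-\overline{\beta}f^2$; this yields the exponential tail directly, without passing through the expectation. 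For the ascent from below $m_-$ it couples with a supercritical process $(\underline{B}_r)$, passes to the pure-birth process of its immortal lines, bounds the expected time for that process to reach $m_-$ by $O(N^b\ln N)$, and then applies Markov plus the same $N^{\delta_2/2}$-fold restart you use.

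Your approach has the advantage of being unified: one Lyapunov/Dynkin computation per side, with the final iteration handling both, and no need for branching-process formulas or the immortal-line decomposition. The paper's coupling approach, by contrast, recycles tools it needs again in Lemma~\ref{Lemma Moran coupling Stationarity}, and in the descent case avoids the expected-value detour entirely. The interface verification you flag (checking that the first-order drift term $\sim\delta_1$ dominates the discrete second-order correction $\sim 1/k$ near $k\approx m_\pm$) is indeed the only place requiring care; since $m_\pm\sim N^{1-b}\to\infty$ it goes through, and your candidate functions $V$ are well chosen for this.
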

\begin{proof}
	We proceed in a similar manner as \cite{PP} and separate the proof into two cases
	\begin{itemize}
		\item[i)] $B_0 > \frac{2s_N}{2s_N + \rho^2 }N (1+\delta_1)$
		\item[ii)] $B_0<\frac{2s_N}{2s_N + \rho^2 }N (1-\delta_1)$.
	\end{itemize}
	For case i) the proof relies on a stochastic domination of the MASP by a birth-death process, while for case ii) we construct a pure birth process  that is stochastically dominated by the MASP. We start by proving case i).\\
	Assume the most extremal starting point $B_0=N$. 
	We couple the process $(B_r)_{r\geq 0}$ with a birth-death process $(\overline{B}_r)_{r\geq 0}$ which stochastically dominates $(B_r)_{r\geq 0}$ until $(B_r)_{r\geq 0}$ crosses the level $\frac{2s_N}{2s_N + \rho^2 }N (1+\delta_1)$. $(\overline{B}_r)_{r\geq 0}$ is defined as the Markov process with state space $\N_0$ and the following transition rates
	\begin{itemize}
		\item $k \to k+1$ with rate $k s_N=:\overline{\beta} k$
		\item $k \to k-1$ with rate $k \frac{s_N \rho^2 }{2s_N+\rho^2 } (1+\delta_1)=:\overline{\alpha} k$.
	\end{itemize}
Note that $\overline{\beta}k\geq s_N k(N-k)/N $ and $\overline{\alpha} k \leq \binom{k}{2} \frac{\rho^2}{N}$ for any $k \geq \frac{2 s_N}{2 s_N + \rho^2} N (1+\delta_1)$. Hence, we can couple $(\overline{B}_r)_{r\geq 0}$ and $(B_r)_{r\geq 0}$ such that $B_r \leq \overline{B}_r$ a.s. as long as $B_r \geq \frac{2 s_N}{2 s_N + \rho^2} N (1+\delta_1)$. In particular, we have 
\begin{align}\PP(T \geq r| B_0=k) \leq \PP(\overline{\tau}_0 \geq r| \overline{B}_0=k) \label{approx by branching}
\end{align}
 when we set $\overline{\tau}_0:=\inf \left\lbrace r \geq 0 : \overline{B}_r=0 \right\rbrace$ and $k \geq \frac{2 s_N}{2 s_N + \rho^2} N (1+\delta_1)$.
 For the birth-death process $\overline{B}_r$ we can estimate  $\overline{\tau}_0$,  by a classical \textit{first step} analysis
	\begin{align*}
	\PP(\overline{\tau}_0 \geq r|\overline{B}_0=1 )=  &  (1-(\overline{\alpha}+\overline{\beta} )dr) \,\PP(\overline{\tau}_0 \geq r-dr|\overline{B}_0=1) \\ &+ \overline{\beta} dr(1-(1-\PP(\overline{\tau}_0 \geq r-dr|\overline{B}_0=1))^2) 
	\end{align*}
    Setting $f(r)=\PP(\overline{\tau}_0 \geq r|\overline{B}_0=1)$ we obtain
 \begin{align}    
	f^{\prime}(r)&= ( \overline{\beta} -\overline{\alpha}) f(r)-\overline{\beta} f(r)^2 \label{Diffeq2} \notag
	\end{align}
	with $f(0)=1$ which is solved by
	\begin{align*}
	f(r)= \frac{\overline{\alpha}-\overline{\beta} }{\overline{\alpha} e^{r (\overline{\alpha}-\overline{\beta})} -\overline{\beta}}.
	\end{align*}
	Observe that $\overline{\alpha}-\overline{\beta}=\frac{s_N \rho^2 }{\rho^2 +2s_N} (1+\delta_1)-s_N =\delta_1 s_N (1+o(1))$, hence
	\begin{align}
	f(N^{b+\delta_2})&=\frac{\delta_1 s_N(1+o(1))}{ \frac{\rho^2 s_N}{2s_N+\rho^2}(1+\delta_1) \exp \left( N^{b+\delta_2} \delta _1 s_N  (1+o(1))  \right)-s_N}\\
	&=\frac{\delta_1 (1+o(1))}{\frac{\rho^2}{2s_N+\rho^2}(1+\delta_1) \exp \left( N^{\delta_2} \delta _1(1+o(1))  \right)-1}. \label{one indi}
	\end{align}

    From \eqref{approx by branching} and \eqref{one indi} we finally estimate
		\begin{align*}
	\PP(T<N^{b+\delta_2} |B_0=N) &\geq \PP(\overline{\tau}_0<N^{b+\delta_2} |\overline{B}_0=N)\geq 
	1-  N f(N^{b+\delta_2})\\
	&=1- O(N \exp(-N^{\delta_2})=1- O( \exp(-N^{\delta_3}))
	\end{align*}
	for any $\delta_3<\delta_2.$ This proves part i).\\\\
	Now it remains to prove case ii). Again assume the most extremal starting point  $B_0=1$. 
	Let $(\underline{B}_r)_{r\geq 0}$ be a birth-death process which jumps
	\begin{itemize}
		\item from $k$ to $k+1$ at rate $ks_N (1-\frac{2s_N}{2s_N+ \rho^2}(1-\delta_1))=:\underline{\beta} k$
		\item from $k$ to $k-1$ at rate $k \frac{s_N \rho^2 }{2 s_N+\rho^2 }(1-\delta_1)=: \underline{\alpha} k$.
	\end{itemize}
	Observe that $\underline{\beta} k \leq s_N k (N-k)/N$ and $\underline{\alpha} k  \geq \binom k 2 \frac{\rho^2}{N}$ as long as $k \leq \frac{ 2 s_N}{\rho^2 + 2 s_N} N (1-\delta_1)$. Hence, we can couple 
	$(\underline{B}_r)_{r\geq 0}$ and $(B_r)_{r\geq 0}$ such that $B_r \geq \underline{B}_r$ as long as $B_r \leq \frac{ 2 s_N}{\rho^2 + 2 s_N} N (1-\delta_1)$.
	
	The extinction probability $\xi_0$ of $(\underline{B}_r)_{r\geq 0}$ is the smallest solution of
	\begin{align*}
	\xi= \frac{\underline{\beta}}{\underline{\beta} +\underline{\alpha}} \xi^2 + \frac{\underline{\alpha}}{\underline{\beta} +\underline{\alpha}},
	\end{align*}
	that is $\xi_0=\frac{\underline{\alpha}}{\underline{\beta}}<1$. Let $(\underline{B}_r^{I})_{r\geq 0}$ be the pure birth process consisting of the immortal lines of $(\underline{B}_r)_{r\geq 0}$, i.e.\ each line branches at rate $(1-\xi_0) \underline{\beta}$.\\
	Let $ \tau=\inf \{ r \geq 0 : B_r \geq \frac{2}{ \rho^2 } s_N N (1-\delta_1) \}$ be the time when $(B_r)_{ r\geq 0}$ 
	reaches the level $\frac{2}{ \rho^2 } s_N N (1-\delta_1)$ and define $\underline{\tau}^{I}$ and $\underline{\tau}$ in the same way for the processes $(\underline{B}_r^{I})_{t \geq 0}$ and $(\underline{B}_r)_{r \geq 0}$ respectively in place of $(B_r)_{r \geq 0}$, then $\underline{\tau}^{I}\geq \underline{\tau} \geq \tau$ a.s. In order to prove ii) it remains to show $\PP (\underline{\tau}^{I} \geq N^{b+\delta_2 })=O(\exp(-N^{\delta_3}))$ for $\delta_3 >0$. We have	
	\begin{align}
	\EE{\underline{\tau}^I}&=\EE{\sum_{i=1}^{ \left\lfloor\frac{2s_N}{2s_N+\rho^2} N(1-\delta_1)\right\rfloor} \frac{1}{i \underline{\beta} (1-\xi_0)} }= \frac{1}{\underline{\beta} (1-\xi_0)} \left(  \ln\left( \frac{2s_N(1-\delta_1)}{2s_N+\rho^2 } N \right)+O(1) \right)\notag \\
	&=\frac{1}{\delta_1 s_N} \left(  \ln\left( \frac{2s_N(1-\delta_1)}{2s_N+\rho^2 } N \right)+O(1) \right)= \frac{1}{\delta_1} N^b \ln\left( \frac{2(1-\delta_1)}{\rho^2 } N^{1-b} \right)(1+O(s_N)) \notag \\
	&=\frac{1-b}{\delta_1} N^b \ln\left( N \right)(1+O((\ln N)^{-1})).
	\end{align}
	We can estimate
	\begin{align}
		\PP(\underline{\tau}^I>N^{b+\delta_2}|\underline{B}^I_0=1)\leq \PP_1(\underline{\tau}^I>N^{b+\delta_2/2}|\underline{B}^I_0=1) ^{N^{\delta_2/2}}
	\end{align}	
for $\delta_2>0$ by separating the time interval of length $N^{b+\delta_2}$ into $N^{\delta_2/2}$ time intervals of length $N^{b+\delta_2/2}$ and realizing that if $(\underline{B}_r^I)_{t\geq 0}$ did not reach the level $\frac{2}{ \rho^2 } s_N N (1-\delta_1)$ in a time interval of length $N^{b+\delta_2/2}$ then in the worst case $(\underline{B}_r)_{r\geq 0}$ is $1$ at the start of each time interval. 
	
	By Markov's inequality we then arrive at
	\begin{align}
	\PP(\underline{\tau}^I \geq N^{b+\delta_2})&\leq \PP(\underline{\tau}^I >N^{b+\frac{\delta_2}{2}} )^{N^{\frac{\delta_2}{2}}} \notag \leq \left( \frac{1}{\delta_1} N^{-\frac{\delta_2}{2}} \ln N \right)^{N^{\frac{\delta_2}{2}}}\\
	&=\exp \ln \left( \left( \frac{1}{\delta_1} N^{-\frac{\delta_2}{2}} \ln N \right)^{N^{\frac{\delta_2}{2}}} \right)\notag \\
	& \leq \exp\left( -\frac{\delta_2}{2} N^{\frac{\delta_2}{2}} \right)
	=O(\exp(-N^{\delta_3})) \label{Estimated Probability}
	\end{align}
	for $\delta_3<\delta_2/2$. From \eqref{Estimated Probability} we can directly conclude $\PP(\tau \geq N^{b+\delta_2})= O( \exp( -N^{\delta_3}))$, which together with part i) finishes the proof.
\end{proof}

\begin{lemma}[MASP's leaving time of the central region] \label{Lemma MASP stays close to expectation}
Let $(B_r)_{r\geq 0}$ be a MASP started in $x \in [\frac{2s_N}{2s_N+\rho^2} N(1-\delta_1),\frac{2s_N}{2s_N+\rho^2} N(1+\delta_1)]$ and assume in addition to \eqref{Definition constants} that $0< \delta_1< \frac{1}{2}$ and $0<\delta_2 <\frac{\eta}{3}$. Let $S=\inf \{r\geq 0 : B_r \notin [\frac{2s_N}{2s_N+\rho^2} N(1-2\delta_1),\frac{2s_N}{2s_N+\rho^2} N(1+ 2\delta_1) ]\}$. Then  
	\begin{align}\label{Equation stay close to expectation}
	\pp{S>N^{b+\delta_2} } \geq 1- o(\exp(- N^{1-b-3\delta_2})).
	\end{align}
\end{lemma}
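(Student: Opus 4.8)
The plan is to exploit that, throughout the doubled interval, the MASP has a definite \emph{restoring} bias towards its centre $m_N:=\frac{2s_N}{2s_N+\rho^2}N$, and that this centre sits at scale $m_N=\Theta(N^{1-b})$ (since $s_N=N^{-b}$ with $1-b\in[\eta,\tfrac13-\eta]$). First I would record the shape of the rates in the relevant range. Every state $k$ that the process can visit before time $S$ satisfies $k\le m_N(1+2\delta_1)=\Theta(N^{1-b})$, hence $k/N=O(s_N)=o(1)$; consequently the up-rate is $\beta(k)=ks_N\frac{N-k}{N}=ks_N(1+o(1))$ and the down-rate $\alpha(k)=\binom k2\frac{\rho^2}{N}=\frac{k^2\rho^2}{2N}(1+o(1))$, so that
\[\frac{\alpha(k)}{\beta(k)}=\frac{k}{m_N}\,(1+o(1)).\]
In particular $\alpha(k)/\beta(k)\ge 1+\delta_1/2$ once $k\ge m_N(1+\delta_1)$, and $\alpha(k)/\beta(k)\le 1-\delta_1/2$ once $k\le m_N(1-\delta_1)$, for $N$ large; this uniform order-$\delta_1$ bias drives the argument.

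Next I would split the exit event $\{S\le N^{b+\delta_2}\}$ into an \emph{upper} exit (reaching $U:=m_N(1+2\delta_1)$) and a \emph{lower} exit (reaching $m_N(1-2\delta_1)$), treating them symmetrically; I describe the upper one. Writing $L:=m_N(1+\delta_1)$, the process can reach $U$ only by completing an excursion above $L$ that attains $U$ before returning to $L$. By comparison with a birth--death process of constant downward bias $\theta:=1+\delta_1/2$ (equivalently, via the space-harmonic function $k\mapsto\theta^{\,k}$ for that dominating process and optional stopping) each such excursion reaches $U$ before falling back to $L$ with probability at most
\[\theta^{-(U-L)}=(1+\delta_1/2)^{-\delta_1 m_N}\le \exp\!\big(-\tfrac14\delta_1^2 m_N\big)=\exp\big(-\Theta(N^{1-b})\big).\]

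It then remains to bound the number of such excursions available within time $N^{b+\delta_2}$. Before $S$ the state is at most $U=\Theta(N^{1-b})$, so the total jump rate $\beta(k)+\alpha(k)$ is $O(N^{1-2b})$ there; hence the number $J$ of jumps performed up to time $\min(S,N^{b+\delta_2})$ is stochastically dominated by a Poisson variable of mean $O(N^{1-b+\delta_2})$, and $\PP(J>2\,\E J)=\exp(-\Theta(N^{1-b+\delta_2}))$. On $\{J\le 2\,\E J\}$ there are at most $O(N^{1-b+\delta_2})$ excursions above $L$, so a union bound over excursions together with the strong Markov property gives
\[\PP(\text{upper exit by }N^{b+\delta_2})\le O\!\big(N^{1-b+\delta_2}\big)\exp\big(-\Theta(N^{1-b})\big)+\exp\big(-\Theta(N^{1-b+\delta_2})\big).\]
The lower exit is estimated identically, now using the upward bias below the centre. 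Since $1-b\ge\eta>3\delta_2$, both right-hand exponents dominate $N^{1-b-3\delta_2}$ and the polynomial prefactor is absorbed, so $\PP(S\le N^{b+\delta_2})=\exp(-\Theta(N^{1-b}))=o(\exp(-N^{1-b-3\delta_2}))$, which is the claim (indeed a slightly stronger bound than stated).

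I expect the main obstacle to be the passage from the time-free gambler's-ruin estimate for a single excursion to the time-bounded statement, i.e.\ controlling how many excursions can occur by time $N^{b+\delta_2}$. This is exactly where one must use that the process stays below $U=\Theta(N^{1-b})$ to pin the jump rate at $O(N^{1-2b})$ and thereby keep the Poisson jump count at mean $O(N^{1-b+\delta_2})$; the two standing conditions enter precisely here, $\delta_1<\tfrac12$ keeping the doubled interval inside $(0,\infty)$ and guaranteeing the order-$\delta_1$ bias, and $\delta_2<\eta/3$ ensuring $1-b-3\delta_2>0$ so that the resulting bound is nontrivial. A secondary point requiring care is the uniformity of the $(1+o(1))$ corrections in the rates across the whole strip, which is what legitimizes replacing $\alpha/\beta$ by $k/m_N$ and extracting the constant comparison bias $\theta$.
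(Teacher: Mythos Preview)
Your argument is correct but takes a genuinely different route from the paper's. The paper \emph{discards} the restoring drift altogether: it couples the embedded jump chain of $(B_r)$ with a \emph{symmetric} simple random walk $(S_n)$, bounds the number of jumps in $[0,N^{b+\delta_2}]$ by $N^{1-b+2\delta_2}$ via a tail bound on sums of exponentials (Janson's inequality), and then applies the reflection principle together with Hoeffding's inequality to get
\[
\PP\Big(\max_{n\le N^{1-b+2\delta_2}}|S_n|>\delta_1 m_N\Big)\le 4\exp\big(-c\,N^{1-b-2\delta_2}\big),
\]
since the required displacement $\delta_1 m_N$ is of order $N^{1-b}$ while the time horizon for the walk is $N^{1-b+2\delta_2}$. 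By contrast, you exploit the drift via an exponential supermartingale / gambler's-ruin bound on each excursion above $L=m_N(1+\delta_1)$ (or below $m_N(1-\delta_1)$), together with a Poisson bound on the number of such excursions.

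What each approach buys: the paper's argument is shorter and needs only that the displacement-squared beats the number of steps, whereas your argument does a bit more bookkeeping (excursion decomposition plus a count of jumps) but yields the stronger exponent $\exp(-\Theta(N^{1-b}))$ instead of $\exp(-\Theta(N^{1-b-2\delta_2}))$. Both comfortably imply the stated $o(\exp(-N^{1-b-3\delta_2}))$. Your identification of where $\delta_2<\eta/3$ and $\delta_1<\tfrac12$ enter is also accurate; in the paper's proof $\delta_2<\eta/3$ is only needed for the weaker final rewriting, not for Hoeffding itself.
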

\begin{proof}
	Assume we have $B_0 \in [\frac{2s_N}{2s_N+\rho^2 }N (1-\delta_1),\frac{2s_N}{2s_N+\rho^2 } N (1+\delta_1)]$. To prove 
	\eqref{Equation stay close to expectation} we couple $(B_r)_{r\geq 0}$ with a symmetric (discrete time) random walk $(S_n)_{n\geq 0}$, and thus ignore the drift to  $\frac{2s_N}{2s_N+\rho^2 }N$. An application of Theorem 5.1 iii) of \cite{SJ} yields that $(B_r)_{r\geq0}$ makes at most $N^{1-b +2\delta_2}$ many jumps in a time interval of length $N^{b+\delta_2}$ with probability $1-O(\exp(-N^{1-b+2 \delta_2})$, see also the estimate \eqref{Exp bound exp variables} in Lemma \ref{Lemma Coupling MASP,CASP} below, where we analyse the jumps and jump times of the MASP in more detail. Hence,	
		\begin{align}
		&\PP_x \left( B_r \notin [\frac{2s_N}{2s_N+\rho^2 }N (1-2\delta_1),\frac{2s_N}{2s_N+\rho^2 } N (1+2\delta_1)] \text{ for some } r \leq N^{b+\delta_2} \right) \notag 
		\\ \leq &\PP_0 \left( S_n \notin [-\delta_1 \frac{2s_N}{2s_N+\rho^2 }N ,\delta_1\frac{2s_N}{2s_N+\rho^2 } N]\text{ for some } n \leq N^{1-b+2\delta_2} \right)\notag \\
		=&2\PP_0 \left( \max_{1\leq n \leq N^{1-b+2\delta_2}}S_n \notin [0,\delta_1\frac{2s_N}{2s_N+\rho^2 } N]\right) \notag \\
		=&4\PP_0 \left( S_{N^{1-b+2\delta_2}} >\delta_1\frac{2s_N}{2s_N+\rho^2 } N \right)  \label{Reflection principle}\\
		\leq& 4 \exp \left(-c N^{1-b-2\delta_2} \right)=o(\exp( -N^{1-b-3\delta_2}))
		\label{Hoeffding} 
		\end{align}
		for some appropriate $c>0$ independent of $N$. To obtain equation \eqref{Reflection principle} and inequality \eqref{Hoeffding} we used the reflection principle and Hoeffding's inequality.  This finishes the proof.
	\end{proof}
 \begin{lemma}[MASP close to stationarity] \label{Lemma Moran coupling Stationarity}
 Let	$(B_r)_{r\geq 0}$ be a MASP started in $k$ individuals, with $1 \leq k \leq N$, then 
 	\begin{align*}
d_{\rm TV}(\mathscr L(B_{N^{b+\delta_2}}), \mathscr L(B_{\rm eq})) = O(\exp(-N^{\delta_3}))
 	\end{align*}
 	with $B_{\rm eq}=B_{\rm eq}^{(N)}$ as in \eqref{fixNMoran}, i.e.\ distributed as a Binomial$(N, \frac{2s_N}{2s_N + \rho^2})$-random variable  conditioned to be strictly positive, and the constant in the Landau $O$ is uniform in $k$.	
 \end{lemma}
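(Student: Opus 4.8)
The plan is to bound the total-variation distance by the coalescence time of a \emph{monotone grand coupling} of the MASP started from all initial states at once. Since the MASP is a skip-free (birth--death) jump process on $[N]$ whose up- and down-rates are, for each fixed transition, monotone in the current state, I would use the standard order-preserving coupling to produce a family $(B^{(x)}_r)_{r\ge0}$, $x\in[N]$, driven by common randomness, with $x\le y \Rightarrow B^{(x)}_r\le B^{(y)}_r$ for all $r$, and such that two copies that ever meet stay together forever. Feeding in the random initial state $B_{\rm eq}$ (independent of the driving noise) yields a stationary copy $B^{(B_{\rm eq})}$; since $B^{(1)}_r\le B^{(k)}_r,\,B^{(B_{\rm eq})}_r \le B^{(N)}_r$, coalescence of the two extremal copies $B^{(1)}$ and $B^{(N)}$ at time $t$ forces $B^{(k)}_t=B^{(B_{\rm eq})}_t$. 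Writing $D_r:=B^{(N)}_r-B^{(1)}_r\ge0$ and $t:=N^{b+\delta_2}$, the coupling inequality then gives
\[
d_{\rm TV}\bigl(\mathscr L(B_{t}\mid B_0=k),\ \mathscr L(B_{\rm eq})\bigr) \le \pp{D_{t}\ge 1},
\]
uniformly in $k\in[N]$, which is precisely the asserted uniformity.

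For the first phase I would invoke Lemma \ref{Lemma MASP comes down from N,up from 1} in both of its regimes --- case i) for the top copy $B^{(N)}$ coming down and case ii) for the bottom copy $B^{(1)}$ going up --- to conclude that both extremal copies enter the central interval $I_1:=[k^*(1-\delta_1),k^*(1+\delta_1)]$, with $k^*:=\frac{2s_N}{2s_N+\rho^2}N$, by some time $\le N^{b+\delta_2'}$ for a fixed $\delta_2'<\delta_2$, on an event of probability $1-O(\exp(-N^{\delta_3}))$. On this event a window of length $\gtrsim N^{b+\delta_2}$ remains for coalescence, and by Lemma \ref{Lemma MASP stays close to expectation} both copies then stay inside the wider interval $I_2:=[k^*(1-2\delta_1),k^*(1+2\delta_1)]$ throughout $[0,N^{b+\delta_2}]$ up to a further error $o(\exp(-N^{1-b-3\delta_2}))$; for the prescribed range of constants in \eqref{Definition constants} both errors are $O(\exp(-N^{\delta_3}))$.

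The heart of the matter is the second phase: inside $I_2$ the gap $D_r$ contracts geometrically at rate of order $s_N$. Writing $\mu(k):=ks_N\frac{N-k}{N}-\frac{\rho^2}{N}\binom{k}{2}$ for the MASP drift, one has, for \emph{any} coupling, $\frac{d}{dr}\EE{D_r}=\EE{\mu(B^{(N)}_r)-\mu(B^{(1)}_r)}$, since the instantaneous mean change of $D$ is the difference of the marginal drifts. A direct computation gives $\mu(k^*)=0$ and $\mu'(k^*)=-s_N(1+o(1))$, and $\mu$ is concave with its maximum well below $I_2$; hence for $\delta_1$ small enough there is $c>0$ with $\mu'(k)\le -cs_N$ throughout $I_2$, so that $\mu(B^{(N)}_r)-\mu(B^{(1)}_r)\le -cs_N D_r$ while both copies lie in $I_2$. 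Letting $S$ be the first exit time of $I_2$ by either copy and working with the stopped gap $D_{r\wedge S}$ turns this into $\frac{d}{dr}\EE{D_{r\wedge S}}\le -cs_N\,\EE{D_{r\wedge S}}$, whence $\EE{D_{r\wedge S}}\le D_{r_0}\,e^{-cs_N (r-r_0)}$ started from the Phase-1 entrance time $r_0$, where $D_{r_0}\le N$. Since $cs_N\,(t-r_0)\sim cN^{-b}\cdot N^{b+\delta_2}=cN^{\delta_2}$, I get $\EE{D_{t\wedge S}}\le N\,e^{-cN^{\delta_2}}$; combining this with $\pp{S\le t}=o(\exp(-N^{1-b-3\delta_2}))$ and the Phase-1 estimate, Markov's inequality yields $\pp{D_t\ge1}=O(Ne^{-cN^{\delta_2}})+O(\exp(-N^{\delta_3}))=O(\exp(-N^{\delta_3}))$, as required.

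The main obstacle is exactly this second phase: turning the heuristic geometric contraction into a rigorous bound in the presence of the rare excursions out of $I_2$, where the linearisation $\mu(k)\approx -s_N(k-k^*)$ and hence the sign of $\mu(B^{(N)})-\mu(B^{(1)})$ are no longer controlled. One must couple these excursions to the stopping-time bound of Lemma \ref{Lemma MASP stays close to expectation}, and check that passing to $r\wedge S$ preserves the contraction while the escape contributes only the stated stretched-exponential error. A secondary, more routine point is to spell out that the order-preserving grand coupling of this skip-free birth--death process can indeed be realised so that coalesced copies remain coalesced and the first-moment identity for $D_r$ holds.
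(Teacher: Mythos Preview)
Your approach is correct and reaches the same conclusion, but it differs from the paper's in the second phase. Both arguments start from the same monotone coupling (the paper realises it via the Moran graphical representation and couples a copy from $k$ with one started in stationarity, while you run a grand coupling of the two extremal copies) and both invoke Lemmata \ref{Lemma MASP comes down from N,up from 1} and \ref{Lemma MASP stays close to expectation} to trap the relevant copies in the central interval. Where you use a first-moment contraction --- linearising the drift $\mu$ on $I_2$ to get $\mu'(k)\le -cs_N$ and then Gr\"onwall on $\EE{D_{r\wedge S}}$ --- the paper instead writes out the joint transition rates of the ordered pair, reads off that the gap $Z_r$ jumps up at rate $\le s_N z$ and down at rate $\ge \rho^2\tfrac{2s_N}{2s_N+\rho^2}(1-2\delta_1)\,z$, and dominates $Z$ by a subcritical linear birth--death process whose extinction time is bounded via the explicit ODE $f'=(\beta'-\alpha')f-\beta'f^2$.

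The paper's rate-domination is a bit cleaner on two counts: it only needs the \emph{lower} copy to stay in the central window (not both), so the conditioning is lighter; and the birth--death comparison gives the coalescence-time tail directly, without the stopping-time bookkeeping you correctly flag as the main obstacle. Conversely, your drift argument is more portable (it doesn't use the skip-free structure and would transfer to processes where no clean rate domination is available), and your grand coupling yields the uniformity in $k$ immediately rather than via a case split $B_0 \gtrless B_0^{eq}$. Note that your differential inequality for the stopped gap actually picks up a forcing term $cs_N\,\EE{D_S\mathbf 1_{\{S\le r\}}}$, but bounding this by $cs_N N\,\pp{S\le t}$ and integrating against $e^{-cs_N(r-u)}$ only adds another $O(N\pp{S\le t})$ to the final estimate, which is already absorbed in your error budget.
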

\begin{proof}
    We follow a similar strategy as the one used in the proof of Lemma 2.10 in \cite{PP}.
    Let $(B_r^{eq})_{r\geq 0}$ be a MASP started in the stationary distribution. Assume that in the graphical representation at time 0 either the lines of $B_0$ are contained in $B^{eq}_0$ or vice versa. Then $B_r \leq B^{eq}_r$, for all $r \geq 0$, or vice versa $B^{eq}_r \leq B_r$. Then $\PP(B_{N^{b+\delta_2}}=k) =  \PP(B^{eq} =k)(1-O(e^{-N^{\delta_3}}))$ follows, once we show that at time $N^{b+\delta_2}$ both processes are equal with probability $(1-O(e^{-N^{\delta_3}}))$.
    
    The tuple $(B^{eq}_r, B_r)_{r \geq 0}$, and the tuple $(B_r, B^{eq}_r)_{r\geq 0}$ resp., have the following transition rates: jumps from  $(k, \ell)$ for $1 \leq k \leq \ell \leq N $ to
		 \begin{itemize}
              \item $(k+1, \ell +1)$ occur at rate $s_N k(1-\frac{\ell}{N})$
              \item $(k, \ell +1)$ occur at rate $s_N(\ell -k)(1- \frac{\ell}{N})$
		      \item $(k+1,\ell)$ occur at rate $ks_N \frac{\ell-k}{N}$
		      \item $(k, \ell-1)$ occur at rate $\frac{\rho^2}{N} \left( \binom{\ell-k}{2} +  (\ell -k) k \right)$
		      \item $(k-1, \ell -1)$ occur at rate $\frac{\rho^2}{N} \binom k 2$.
		      \end{itemize} 

    To proceed further we consider the two cases
	\begin{itemize}
		\item[i)] $B_0 > B^{eq}_0$
		\item[ii)] $B_0 < B^{eq}_0$
	\end{itemize}
	separately.

We begin with Case i). 
Consider the process $(Z_r)_{r\geq 0}$ defined as $Z_r: = B_r - B^{eq}_r$ and condition on the two events that the process $B^{eq}_0$ is started in a state in $[\frac{2s_N}{2s_N+\rho^2} N(1-\delta_1),\frac{2s_N}{2s_N+\rho^2} N(1+\delta_1) ]$ and stays in 
$[\frac{2s_N}{2s_N+\rho^2} N(1-2\delta_1),\frac{2s_N}{2s_N+\rho^2} N(1+2\delta_1) ]$ for some $0<\delta_1<\frac{1}{2}$. The probability of each event can be estimated by $1 -O(\exp(- N^{\delta_2}))$,  the former event by Hoeffding's inequality  and the latter with Lemma \ref{Lemma MASP stays close to expectation}.
 The process $(Z_r)_{r\geq 0}$ jumps from $z$ to $z+1$ at most at rate $s_n  z$ and under the above condition  $(Z_r)_{r\geq 0}$ jumps from $z$ to $z-1$ at least at rate $ \rho^2\frac{2s_N}{2s_N+\rho^2}(1 -2 \delta_1) z$: 
		      If $(Z_r, B_r, B^{eq}_r)= (z,\ell, k)$
		      jumps to $(z-1, \ell-1,k)$ occur at rate $\frac{\rho^2}{N}(\binom z 2 + z k)$ and jumps to $(z-1,\ell,k+1)$ at rate $ks_N \frac{\ell-k}{N}$. Therefore, the process $(Z_r)_{r\geq 0}$ jumps from $z \to z-1$ at rate $r_{z,z-1}=\frac{\rho^2}{N}(\binom z 2 + z k)+ks_N \frac{z}{N}$.
              Due to the condition and the assumption that $\ell \geq k \geq \frac{2s_N}{2s_N+\rho^2} N (1 -2 \delta_1)$ we can bound 
      \begin{align*}
              r_{z,z-1} & = \frac{\rho^2}{N}\left(\binom z 2 + z k\right)+ks_N \frac{z}{N}\geq  \frac{\rho^2}{2N}z(k+\ell -1)+z \frac{2s_N^2}{2s_N+\rho^2}(1 -2 \delta_1)
              \\ &
              \geq z\frac{\rho^2}{2N} 2 \frac{2s_N}{2s_N+\rho^2} N (1 -2 \delta_1) = z \rho^2\frac{2s_N}{2s_N+\rho^2}(1 -2 \delta_1).
        \end{align*}
		      Hence, we can couple $(Z_r)_{r \geq 0}$ to a birth-death process $(Z'_r)_{r\geq 0}$ with individual birth rate $s_N=:\beta'$ and individual death rate $\rho^2\frac{2s_N}{2s_N+\rho^2}(1 -2 \delta_1)=:\alpha'$, such that $Z_r \leq Z'_r$ a.s.
		      Let $\xi:=\inf \{r\geq 0 : Z_r=0 \}$ and $\xi':=\inf \{r\geq 0 : Z_r'=0 \}$. Obviously it holds $\PP(\xi \geq r) \leq \PP(\xi ' \geq r)$ for all $r \geq 0$.
		      As in the proof of Lemma \ref{Lemma MASP comes down from N,up from 1} we estimate 
		      \begin{align*}
		      \PP(\xi' \geq N^{b+\delta_2}| Z'_0=1) 
		      &=\frac{\left(\frac{2\rho^2 (1-2\delta_1)}{2s_N+\rho^2}-1\right) s_N}{\left(\frac{2\rho^2 (1-2\delta_1)}{2s_N+\rho^2}-1\right) s_N \exp(\left(\frac{2\rho^2 (1-2\delta_1)}{2s_N+\rho^2}-1\right) N^{\delta_2}) -s_N} \\ &
		      =O(\exp(-c_N N^{\delta_2}))
		      \end{align*}
		      with $c_N=\left(\frac{2\rho^2 (1-2\delta_1)}{2s_N+\rho^2}-1\right)\to 2(1-2\delta_1)-1>0$. Since $Z_0 \leq N$ the probability that all lines go extinct before time $N^{b+\delta_2}$ can be estimated by
		      \begin{align*}
		      \PP(Z_{N^{b+\delta_2}}=0)&\geq \left( 1-\exp(-c_N N^{\delta_2})\right)^{N} = 1 - O \left( \exp( -N^{\delta_3})\right),
		      \end{align*}
            which proves Lemma \ref{Lemma Moran coupling Stationarity} in Case i).\\\\
		      In Case ii) we first wait until $(B_r)_{r\geq 0}$ reaches the level $\frac{2}{\rho^2} s_N N(1-\delta_1)$ within a time interval of length $O(N^{b+\delta_2})$ with probability $1- O(\exp(-N^{\delta_3}))$ due to Lemma \ref{Lemma MASP comes down from N,up from 1} and we assume that $B^{eq}_0$ is started in at least $\frac{2}{\rho^2} s_N N(1-\delta_1)$, which happens with probability $1-O(\exp(-\delta_1^2 N))$ due to Hoeffding's inequality. Then due to Lemma \ref{Lemma MASP stays close to expectation} both processes remain bounded from below by $\frac{2}{\rho^2} s_N N(1-2\delta_1)$. When $(B_r)_{r\geq 0}$ has reached at least the level $\frac{2}{\rho^2} s_N N(1-\delta_1)$) consider $Z_r = B_r - B^{eq}_r$. Then the same arguments as in Case i) show the claim.    
\end{proof}

As mentioned in the sketch of proof of Theorem \ref{Haldane  Formel} in Section \ref{Main Results} we aim to couple the CASP with the MASP. We have seen in the calculations before that in the regime where the number of potential ancestors is at most of order $N^{1-b+\varepsilon}$ for $\varepsilon$ sufficiently small the transition probabilities of these two processes are essentially the same for a time interval of length of order $O(N^{b+\eps})$. In particular in a time interval of length $O(N^{b+\eps})$ we can exclude jumps of size $2$ or bigger in the CASP with probability $O(N^{-\delta})$).\\
\begin{lemma} [Coupling of MASP and CASP] \label{Lemma Coupling MASP,CASP}
	Let $0< \varepsilon< \frac{\eta}{2}$, and $0<\delta=3\eta -6 \eps$. 
	There exists a coupling of the MASP $(B_r)_{r \geq 0}$ and the CASP  $(A_m)_{m \geq 0}$ such that for all common initial values $k_0$ with $1 \leq k_0 \leq N^{1-b+\varepsilon}$
	\begin{align}
	\PP \left( |A_m-B_m| \leq 1,\forall m \in \{0,\dots , N^{b+\eps}\} \right)=1- O(N^{-\delta}). \label{Coupling CASP ASG}
	\end{align}
	with the constant in the Landau $O$ uniform in $k_0$.
\end{lemma}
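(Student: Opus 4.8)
The plan is to drive both chains from the common starting value $k_0$ and to couple their one–step net increments generation by generation, discarding only a family of rare events whose probabilities sum to $O(N^{-\delta})$. Write $b=b_N$ as in \eqref{stob}, so that $\tfrac23+\eta\le b\le 1-\eta$, and fix the horizon $m\in\{0,\dots,N^{b+\eps}\}$. First I confine both processes to the region in which the uniform estimates of Lemma \ref{Lemma jumpsize=1} are available. Since the common centre of attraction $\tfrac{2}{\rho^2}Ns_N$ is of order $N^{1-b}$ and lies well below the starting level $N^{1-b+\eps}$, the one–step drift is strictly negative above that level; the supermartingale property established in the proof of Corollary \ref{Corollary Expectation DASG} (for the CASP, together with Lemma \ref{Lemma CASP comes down}) and the birth–death domination used in the proof of Lemma \ref{Lemma MASP comes down from N,up from 1} (for the MASP) therefore show that, with probability $1-O(N^{-\delta})$, neither $A_m$ nor $B_m$ leaves the range $[1,N^{1-b+\eps}]$ during the whole horizon. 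On this event Lemma \ref{Lemma jumpsize=1} applies with $k\le N^{1-b+\eps}$; writing $E_k:=k^4N^{-2}+k^2s_N^2$, the CASP changes by at least $2$ in a single generation with probability $O(E_k)$ by \eqref{TransitionProbCASP4}, and $E_k=O(N^{2-4b+4\eps})$ uniformly on the range.

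Next I remove multiple moves. For the CASP, summing \eqref{TransitionProbCASP4} over the $N^{b+\eps}$ generations gives total probability $O(N^{b+\eps}\cdot N^{2-4b+4\eps})=O(N^{2-3b+5\eps})$ of ever jumping by $2$ or more; using $b\ge\tfrac23+\eta$ and $\delta=3\eta-6\eps$ this is $O(N^{-\delta})$. For the MASP, on $\{B_r\le N^{1-b+\eps}\}$ the total jump rate is $B_rs_N\tfrac{N-B_r}{N}+\tfrac{\rho^2}{N}\binom{B_r}{2}\le\Lambda_N:=CN^{1-2b+2\eps}$, so the number of jumps in any unit interval is stochastically dominated by a Poisson variable of mean $\Lambda_N$, whence
\begin{equation}\label{Exp bound exp variables}
\PP(\text{at least two jumps of }(B_r)_{r\ge 0}\text{ in }[m,m+1])\le \Lambda_N^{2}=O(N^{2-4b+4\eps}).
\end{equation}
Summing over the $N^{b+\eps}$ intervals again yields $O(N^{-\delta})$. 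Hence, outside an event of probability $O(N^{-\delta})$, in every generation each process changes by a net amount in $\{-1,0,+1\}$ produced by at most one elementary move.

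It remains to couple these single moves so that they coincide. Conditionally on making at most one move, the CASP increment is $+1$, $-1$ or $0$ with probabilities $ks_N+O(E_k)$, $\binom{k}{2}\tfrac{\rho^2}{N}+O(E_k)$ and the complementary mass, by Lemma \ref{Lemma jumpsize=1}. Computing the MASP one–interval law by integrating the exponential holding times, using $\tfrac{N-k}{N}=1-O(N^{-b+\eps})$ and the matched coalescence rate $\tfrac{\rho^2}{N}\binom{k}{2}$, gives $+1$ and $-1$ with probabilities $ks_N+O(E_k+k^2s_N/N)$ and $\binom{k}{2}\tfrac{\rho^2}{N}+O(E_k)$; since $k^2s_N/N=O(E_k)$ on the range, the two conditional increment laws are within total variation $O(E_k)=O(N^{2-4b+4\eps})$ whenever $A_m=B_m=k$. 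Coupling the increments maximally, they disagree in a given generation with probability $O(E_k)$, and summing over the horizon the two processes ever perform different net moves with probability $O(N^{-\delta})$. On the complement of all the exceptional events above, both chains execute identical net moves in each generation; as $A_0=B_0=k_0$ this forces $A_m=B_m$ for all $m\le N^{b+\eps}$, which is even stronger than the asserted $|A_m-B_m|\le 1$, and the total discarded probability is $O(N^{-\delta})$ uniformly in $k_0$, as required.

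The main obstacle is the third step: one must match the net increment of the continuous–time MASP over a unit interval with the single–generation increment of the discrete–time CASP to within $O(E_k)$, which needs the explicit one–interval transition computation and, crucially, that every error contribution — the CASP jump-of-size-$\ge2$ mass, the MASP two-jumps-per-interval mass, and the per-step total-variation mismatch — be simultaneously of order $N^{2-4b+4\eps}$ and hence summable to $O(N^{-\delta})$ over the $N^{b+\eps}$ generations. This bookkeeping closes precisely because $b\ge\tfrac23+\eta$; it is here that the upper bound in \eqref{sasymptotics} is indispensable.
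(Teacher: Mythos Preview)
Your argument follows the same three-stage architecture as the paper's proof: confine both processes to the range where the estimates of Lemma~\ref{Lemma jumpsize=1} are uniform, exclude generations in which either process makes more than one elementary move, and then match the remaining single moves. The genuine difference lies in the third stage. The paper couples the jump \emph{times} explicitly by writing $T_i^A$ and $T_i^B$ as functions of a common uniform, establishes the interlacing $T_i^B\le T_i^A<T_{i+1}^B$ with probability $1-O(N^{1-2b+2\eps})$ per jump, and then matches jump directions; this is what produces the bound $|A_m-B_m|\le 1$ rather than equality. You instead compute the one-generation increment law of the MASP directly and couple the net increments in total variation. Your route is more economical---it avoids the jump-time calculus and, on the good event, actually forces $A_m=B_m$---while the paper's route makes the ``MASP at most one jump ahead'' structure explicit and would extend more readily to non-integer observation times.

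One step in your write-up is under-justified: the confinement of the CASP in stage~1. Invoking the supermartingale property from the proof of Corollary~\ref{Corollary Expectation DASG} together with Doob's maximal inequality only yields $\PP(\max_m A_m\ge \lambda)\le k_0/\lambda$, which cannot produce the required $1-O(N^{-\delta})$ for a threshold within a constant factor of $N^{1-b+\eps}$. The standard repair is a stopping-time bootstrap: set $\sigma:=\inf\{m:\max(A_m,B_m)>2N^{1-b+\eps}\}$, carry out your coupling of increments only on $\{m<\sigma\}$ (where the uniform bound $E_k=O(N^{2-4b+4\eps})$ is valid), prove MASP-confinement below $\tfrac32 N^{1-b+\eps}$ separately via an exponential supermartingale for the embedded birth--death chain (here jumps are always of size~$1$, so this is elementary and gives a stretched-exponential bound), and then observe that on the intersection of ``MASP confined'' and ``increments coupled up to $\min(\sigma,N^{b+\eps})$'' one has $A_m\le B_m+1<2N^{1-b+\eps}$, forcing $\sigma>N^{b+\eps}$. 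The paper's own treatment of this point is brief as well; with this fix your argument is complete.
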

\begin{proof}	
	Let $A_0=B_0=k_0 \leq N^{1-b+\eps}$. We will show that the CASP and the MASP can be coupled such that the jump times of the CASP and the MASP occur consecutively with probability $1-O(N^{-\delta})$. Since the transition probabilities of the CASP and the MASP are essentially the same we can also couple the jump directions with high probability.
	To show that the jump times occur consecutively we first show the following claim.\\
	Claim 1: With probability $1- O(N^{-\delta})$ 
	the MASP makes in each of the time intervals $[\ell-1, \ell]$ at most one jump.\\
	By Lemma \ref{Lemma MASP comes down from N,up from 1} and \ref{Lemma MASP stays close to expectation}  the MASP stays below $2 N^{1-b+\eps}$ with probability $1-O(\exp(-N^{\delta_3}))$.\\
	Denote by $r_{k,k+1}$ and $r_{k,k-1}$ the jump rates for the MASP from $k$ to $k+1$ and from $k$ to $k-1$ respectively with $\gamma=\rho^2$. Then
	\begin{itemize}
		\item $r_{k,k+1}= k s_N +O(\frac{k}{N})$
		\item $r_{k,k-1}= \binom{k}{2} \frac{\rho^2}{N}.$
	\end{itemize}
	Define $r_k = r_{k,k+1}+r_{k,k-1}$ the total jump rate and
	\begin{align*}
	r_{\star}=r_{N^{1-b+\varepsilon}}=\max_{1\leq k \leq N^{1-b+\varepsilon}} r_k= N^{1-2b+2\eps} (1+o(1))
	\end{align*}
	the maximal jump rate. We aim for the coupling to hold for an interval of length $N^{b+\eps}$. The jump times of $(B_r)_{r \geq 0}$ are exponentially distributed with a parameter bounded from above by~$r_{\star}$. To estimate the number of jumps falling into an interval of length $N^{b+\eps}$ we use Theorem~5.1~iii) in \cite{SJ}. Let $(X_i)_{i \geq 1}$ be a family of independent Exp$(r_{\star})$ distributed random variables. For $c = 1-b+4\varepsilon$ Theorem 5.1 iii) yields
	\begin{align}
	\PP\left(\sum_{i=1}^{N^c} X_i \leq N^{b+\varepsilon} \right) =O(\exp( -N^{1-b+4\eps})),
	 \label{Exp bound exp variables}
	\end{align}
	that is the number of jumps is bounded by $N^{1-b+4\eps}$ with probability $1-O(\exp( -N^{1-b}))$.
	For $E=\{(B_r)_{0\leq r \leq N^b} \text{ has at most one jump in the intervals $ [j,j+1]$ for each } 0 \leq j \leq N^b-1 \}$ we have
	\begin{align*}
	\PP(E) &\geq\left( 1-O(\exp( -N^{1-b})) \right) \prod_{i=1}^{N^c}\PP(X_i >1 ) \geq \left(1-O( \exp({-N^{1-b}} )  \right)e^{-r_{\star}  N^c} \notag \\
	&= 1- O(N^{-\delta})
	\end{align*}
	which yields Claim 1.\\	
	Let $T_i^A=\inf \left\lbrace m \geq T_{i-1}^A: A_m \neq A_{T_{i-1}^A} \right\rbrace $ be the $i$-th jump of the CASP with the convention $T_{-1}^A=0$. In the same manner let $T_i^B=\inf \left\lbrace r \geq T_{i-1}^B: B_r \neq B_{T_{i-1}^B} \right\rbrace$ be the $i$-th jump of the MASP again with the convention that $T_{-1}^B=0$. We have
	\begin{align*}
	\PP(B_{T_i^B}=k+1 | B_{T_{i-1}^B} =k)  =\frac{r_{k,k+1}}{r_{k,k+1}+r_{k,k-1}}  =\PP(A_{T_i^A}=k+1| A_{T_{i-1}^A} =k) + e_{k,N}
	\end{align*}
	and
	\begin{align*}
	\PP(B_{T_i^B}=k-1 | B_{T_{i-1}^B} =k)=\frac{r_{k,k-1}}{r_{k,k+1}+r_{k,k-1}}=\PP(A_{T_i^A}=k-1| A_{T_{i-1}^A} =k)+ f_{k,N}, 
	\end{align*}
	where $e_{k,N}, f_{k,N} \in O(\max \left\lbrace  k^2 s_N^2,k^4N^{-2},N^{-1} \right\rbrace )$, the latter being the error terms from \eqref{TransitionProbCASP2} and \eqref{TransitionProbCASP3}. Note that $e_{k,N}, f_{k,N} \geq 0$ because the CASP can make jumps of size $2$ or larger. Set $d_{k,N}=e_{k,N}+f_{k,N}$.\\ 
	We show that we can couple the times $T_{i}^A$ and $T_{i}^B$, such that 
	$T_{i+1}^B <  T_i^A$ for $i=1, ..., N^{1-b + 4\eps}$ with probability 
	$1 - O(N^{-\delta})$. From that follows the Assertion \eqref{Coupling CASP ASG} of the lemma by coupling the jump directions.\\
	We couple the jump times $T_i^A$ and $T_i^B$ such that for all $i \in \{ 1,...,\lfloor N^{1-b+3\eps}\rfloor\}$
	\begin{align}
	\PP(T_{i+1}^B <  T_i^A )=O(N^{1-2b +2\eps})  \label{Coupling condition}
	\end{align}
	from which follows the assertion. We explicitly construct the coupling for $i=1$, and the same holds for any $i \in \{1,...,\lfloor N^{1-b+4\eps}\rfloor\}$. To show $\eqref{Coupling condition}$ observe that, if $A_0=k= B_0$ we can couple $T_1^A$ and $T_1^B$ by setting
	\begin{align*}
	T_1^A \stackrel{d}{=} \left\lceil  \frac{\ln U_1}{ \ln(1-r_k +d_{k,N})} \right\rceil ,\qquad T_1^B\stackrel{d}{=} -\frac{\ln U_1}{r_k}
	\end{align*}
	for $U_1 \sim $Unif$([0,1])$, since $T_1^B$ is Exp$(r_{k,k+1}+r_{k,k-1})$ distributed and $T_1^A$ is Geom$(r_{k,k+1}+r_{k,k-1}+d_{k,N})$ distributed. Note that $T_1^A \geq T_1^B$ almost surely. The coupling holds due to a) if
	\begin{align}
	\PP(T_{2}^B - T_1^B <  T_1^A-T_{1}^B )=O(N^{1-2b +2\eps}). 
	\label{Coupling condition prob}
	\end{align}
	Furthermore observe
	\begin{align*}
	T_1^A-T_1^B \geq \ln U_1 \left( \frac{1}{\ln(1-r_k+d_{k,N}) }+\frac{1}{r_k}\right)=: c_k \ln U_1
	\end{align*}
	We can upper bound the probability in \eqref{Coupling condition prob} if we assume $T_{2}^B- T_1^B \sim $Exp$(r_{k+1})$, thus we obtain for $E_2\sim$Exp$(r_{k+1})$
	\begin{align*}
	\PP( T_{2}^B - T_1^B <T_1^A -T_1^B )&\leq \pp{ E_2 \leq c_k \ln U_1 }\\&= 1- \int_0^1 e^{-r_{k+1} c_k \ln u} du \notag 
	=1 -\int_{0}^{1} u^{-r_{k+1} c_k}du\\&= 1- \frac{1}{r_{k+1} c_k+1} 
	=1-   \frac{1}{\frac{r_{k+1}}{\ln(1-r_k+O(e_N)) } +\frac{r_{k+1}}{r_k} +1} \notag \\
	&=1- \frac{1}{ -\frac{r_{k+1}}{r_k + d_{k,N}+O(r_k^2)}+\frac{r_{k+1}}{r_k} +1} \notag \\
	&=1- \frac{1}{ -\frac{r_{k+1}}{r_k} (1 + O(d_{k,N}/r_k)+O(r_k))+\frac{r_{k+1}}{r_k} +1} \notag \\
	&= O(d_{k,N}/r_k)+O(r_k) = O(N^{1-2b + 2\eps}). \notag
		\end{align*}
	which proves \eqref{Coupling condition}. Together with Claim 1 this proves the assertion of the lemma.
\end{proof}
	We are now able to complete the proof of Theorem \ref{Haldane  Formel}a.
\begin{proof}[Proof of Theorem \ref{Haldane Formel}a.]
	Let $(A_m)_{m\in \Z} = (A_m^{(N)})_{m\in \Z}$ be a stationary version of the CASP with parameters $N$, $\mathscr L(\mathscr W^{(N)})$ and $s_N$ (where as in the previous statements and proofs we are going to suppress the superscript $N$ in $A_m^{(N)}$). By Corollary \ref{corfix} it suffices to analyse $\EE{A_0}/N$ in order to obtain the probability of fixation of a single beneficial mutant. Let 
	\begin{align}\label{defE}
	\mathcal{E}:=\mathcal E^{(N)}:=\{A_{-\lfloor N^{b+\eps}\rfloor} \leq N^{1-b+\eps} ,\,  |A_{-j}-B_{-j}|\leq 1,\ \forall j \in \{0,1,...,\lfloor N^{b+\eps}\rfloor \} \}
	\end{align}
	 be the event that the (stationary) CASP $(A_m)_{m\in \Z}$ is not unusually big at time $-\lfloor   N^{b+\eps}\rfloor $ and can  be coupled with a MASP $(B_r)_{r\ge -\lfloor N^{b+\eps}\rfloor }$ for the time between  $-N^{b+\eps}$ and $0$ such that there the CASP and the MASP differ at most by $1$. Due to the Lemmata \ref{Lemma CASP comes down} and \ref{Lemma Coupling MASP,CASP} we can estimate the probability of this event by
\begin{align}
\pp{\mathcal{E}} = (1-O(N^{-\delta}))(1-O(\exp(-N^{\delta })))=1-O(N^{-\delta}) \label{Event E}
\end{align}
and a suitable  $\delta > 0$. This yields
\begin{align}
\frac{\EE{A_0}}{N}=\frac{1}{N} \EE{A_0|\mathcal{E}}\pp{\mathcal{E}}+ \frac{1}{N} \EE{A_0|\mathcal{E}^c}\pp{\mathcal{E}^c} \label{Decomposition expectation}
\end{align}
We analyse the two expectations above separately, the first one will give us the desired Haldane formula, whereas the second is an error term of order $o(s_N)$. By Lemma \ref{Lemma Moran coupling Stationarity} we get that with 
\begin{equation} \label{Beq} 
 B^{(N)}_{\rm eq}\stackrel{d}{=} {\rm Bin}(N,\frac{2s_N}{2s_N+\rho^2})
 \mbox{ conditioned to be strictly positive},
 \end{equation}
\begin{align*}
\frac{1}{N}\EE{A_0|\mathcal{E}}\pp{\mathcal{E}} & = \frac{1}{N} \sum_{j=1}^{N} j \pp{A_0=j|\mathcal{E}}(1-O(N^{-\delta})) \\ &
= \frac{1}{N} \sum_{j=1}^{N} j \pp{ B^{(N)}_{\rm eq}=j} (1-O(N^{-\delta}))\\ & = \frac{1}{N} \frac{N2s_N}{2s_N+\rho^2} (1-O(N^{-\delta}))
=\frac{2s_N}{\rho^2}(1+o(s_N)).
\end{align*}
It remains to bound the second expectation on the r.h.s.~of \eqref{Decomposition expectation}, with the worst case being $A_{-\lfloor N^{b+\eps}\rfloor}=N$. Then  using the second part of Corollary \ref{Corollary Expectation DASG} gives us
\begin{align*}
\frac{1}{N} \EE{A_0|\mathcal{E}^c}\pp{\mathcal{E}^c}= O\left( \frac{N^{1-b+\eps}}{N} N^{-\delta}\right)=O(N^{-b+\eps-\delta})=o(s_N),
\end{align*}
since $\eps>0$ can be chosen small enough such that $\delta>\eps$. This finishes the proof of Theorem~\ref{Haldane  Formel}.
\end{proof}

\begin{corollary}\label{asnorm}
	Let $A_{\rm eq}^{(N)}$ have the stationary distribution of the CASP with parameters $N$, $\mathscr L(\mathscr W^{(N)})$ and $s_N$. Then, with  $p_N:=\frac{s_N }{\rho^2/2+s_N }$, $\mu_N:= Np_N$  and $\sigma_N^2=Np_N(1-p_N)$, the sequence of random variables
	$\left(A_{\rm eq}^{(N)}-\mu_N\right)/\sigma_N$ 
 converges as $N\to \infty$  in distribution to a standard normal random variable.
	\end{corollary}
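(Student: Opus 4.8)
The plan is to deduce the claimed central limit theorem from the de Moivre--Laplace theorem for the binomial law of $B^{(N)}_{\rm eq}$, and then to transport it to $A^{(N)}_{\rm eq}$ through the coupling of the stationary CASP and the MASP that was already set up in the proof of Theorem \ref{Haldane  Formel}a.

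First I would record the CLT for the Moran equilibrium. By \eqref{Beq} the variable $B^{(N)}_{\rm eq}$ is distributed as $\mathrm{Bin}(N,p_N)$ conditioned to be strictly positive, with $p_N=\frac{s_N}{\rho^2/2+s_N}=\frac{2s_N}{\rho^2+2s_N}$. Since $s_N\to 0$ we have $p_N\sim 2s_N/\rho^2$, hence $\mu_N=Np_N\sim 2Ns_N/\rho^2$ and $\sigma_N^2=Np_N(1-p_N)\sim 2Ns_N/\rho^2$; the lower bound $s_N\ge N^{-1+\eta}$ in \eqref{sasymptotics} forces $\mu_N,\sigma_N^2\ge cN^{\eta}\to\infty$. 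Writing an (unconditioned) $\mathrm{Bin}(N,p_N)$ variable as a sum of $N$ i.i.d.\ $\mathrm{Bernoulli}(p_N)$ summands, the Lyapunov ratio is $O\!\left(Np_N(1-p_N)/\sigma_N^3\right)=O(1/\sigma_N)\to 0$, so the Lyapunov CLT (equivalently, de Moivre--Laplace for a triangular array) yields $(\mathrm{Bin}(N,p_N)-\mu_N)/\sigma_N\Rightarrow \mathcal N(0,1)$. The conditioning on strict positivity is harmless: it has probability $1-(1-p_N)^N$ with $(1-p_N)^N\le e^{-\mu_N}\to 0$, so the total variation distance between $\mathscr L(B^{(N)}_{\rm eq})$ and $\mathscr L(\mathrm{Bin}(N,p_N))$ is $O(e^{-\mu_N})\to 0$ and does not affect the weak limit. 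Thus $(B^{(N)}_{\rm eq}-\mu_N)/\sigma_N\Rightarrow \mathcal N(0,1)$.

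Next I would transfer this to $A^{(N)}_{\rm eq}$. Let $(A_m)_{m\in\Z}$ be a stationary CASP and, on the event $\mathcal E$ of \eqref{defE}, couple it to the MASP $(B_r)$ as in the proof of Theorem \ref{Haldane  Formel}a, so that $|A_0-B_0|\le 1$; by \eqref{Event E} we have $\PP(\mathcal E)=1-O(N^{-\delta})$, and by Lemma \ref{Lemma Moran coupling Stationarity} the law of $B_0$ is within total variation $O(\exp(-N^{\delta_3}))$ of $\mathscr L(B^{(N)}_{\rm eq})$. Fix $x\in\R$ and split the distribution function of $(A_0-\mu_N)/\sigma_N$ according to $\mathcal E$ and $\mathcal E^c$. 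The $\mathcal E^c$-part is bounded by $\PP(\mathcal E^c)=O(N^{-\delta})\to 0$. On $\mathcal E$ I would write $A_0=B_0+R$ with $|R|\le 1$, so that $\{(A_0-\mu_N)/\sigma_N\le x\}\cap\mathcal E$ is sandwiched between the events $\{(B_0-\mu_N)/\sigma_N\le x\mp 1/\sigma_N\}\cap\mathcal E$; dropping the intersection with $\mathcal E$ costs at most $\PP(\mathcal E^c)=o(1)$, replacing $B_0$ by $B^{(N)}_{\rm eq}$ costs at most $O(\exp(-N^{\delta_3}))$, and since $1/\sigma_N\to 0$ the continuity of the limit law gives $\PP((B^{(N)}_{\rm eq}-\mu_N)/\sigma_N\le x\pm 1/\sigma_N)\to\Phi(x)$ by the previous step. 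Combining the two bounds yields $\PP((A_0-\mu_N)/\sigma_N\le x)\to\Phi(x)$ for every $x$, which is the assertion.

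The routine points are the standard CLT and the harmlessness of the positivity conditioning; the only ingredient that requires genuine care, and hence the natural main obstacle, is that the coupling identifies $A_0$ with $B_0$ only up to an additive discrepancy of size $1$ and only on an event of probability $1-O(N^{-\delta})$ rather than almost surely. Both defects are absorbed precisely because $\sigma_N\to\infty$ (so the $\pm 1$ becomes a $\pm 1/\sigma_N\to 0$ shift, harmless by continuity of $\Phi$) and because $\PP(\mathcal E^c)\to 0$. It is exactly the lower bound $s_N\ge N^{-1+\eta}$, guaranteeing $\sigma_N^2\to\infty$, that makes this upgrade from the first-moment statement of Theorem \ref{Haldane  Formel}a to convergence in distribution possible.
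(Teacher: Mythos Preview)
Your proof is correct and follows essentially the same approach as the paper: decompose according to the event $\mathcal E$ of \eqref{defE}, use the coupling $|A_0-B_0|\le 1$ on $\mathcal E$, replace $B_0$ by $B^{(N)}_{\rm eq}$ via Lemma~\ref{Lemma Moran coupling Stationarity}, and invoke de Moivre--Laplace. The only cosmetic difference is that the paper phrases the transfer through bounded continuous test functions $f\in C_b(\R)$, whereas you work with distribution functions and a sandwich $x\mapsto x\pm 1/\sigma_N$; your explicit handling of the $\pm 1$ discrepancy via $\sigma_N\to\infty$ is in fact a detail the paper leaves implicit.
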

\begin{proof}
	In the previous proof we have worked, for a stationary CASP $(A_m^{(N)})_{m\in \mathbb Z}$, with a decomposition of $\mathbb E[A_{0}^{(N)}]$ according to the events  $\mathcal{E}$ and $\mathcal{E}^c$, with $\mathcal{E}$ defined in  \eqref{defE}. We now use the same decomposition for the distribution of  $(A_0^{(N)}-\mu_N)/\sigma_N$
	and obtain for any  $f \in C_b(\R)$ with the same line of reasoning as in the previous proof and with $B^{(N)}_{\rm eq}$ as in \eqref{Beq}
		\begin{equation*}
	\lim_{N\to \infty} \mathbb E[f((A_0^{(N)}-\mu_N)/\sigma_N)] = \lim_{N\to \infty} \mathbb E[f((B^{(N)}_{\rm eq}-\mu_N)/\sigma_N)]= \mathbb E[f(Z)],
	\end{equation*}
where $Z$ is a standard normal random variable.
\end{proof}
\begin{remark}
Using the technique of \cite{K1} it is not difficult to show that $\left(A_{\lfloor r{s_N^{-1}}\rfloor}^{(N)}/\mu_N\right)_{r \ge 0}$ converges in distribution as $N\to \infty$  uniformly on compact time intervals to the solution of a dynamical system whose stable fixed point is 1. One might then also ask about  the asymptotic fluctuations of the   process $A^{(N)}$. {\color{black} Although available results in the  literature (like \cite[Theorem 8.2]{K2} or \cite[Theorem 11.3.2]{EK}) do not directly cover our situation (because e.g. of boundedness assumptions required there), the coupling between $A^{(N)}$ and $B^{(N)}$ analysed above is a promising tool to obtain weak convergence of properly rescaled ancestral processes $A^{(N)}$ to an Ornstein-Uhlenbeck process, which in view of Corollary \ref{asnorm} should include also time infinity.}
Let us mention in this context \cite{C}, which contains a fluctuation result (including time infinity) for the Moran frequency process under \emph{strong} selection and two-way mutation.
\end{remark}

\section{A concentration result for the  equilibrium distribution of the CASP.  Proof of Theorem \ref{Haldane  Formel}b.}\label{Sec7}
Let  $A^{(N)}= (A_m^{(N)})_{m\geq 0}$ be the Cannings ancestral selection process (CASP) as defined in Sec.~\ref{secCASP}.
We will show in the present section that under the assumptions \eqref{sasymptoticsa}, \eqref{second} and \eqref{moments}   the expectation of the equilibrium state $A_{\rm eq}^{(N)}$ of $A^{(N)}$ satisfies the asymptotics
\begin{equation}\label{toshow}
   	\EE{A_{\rm eq}^{(N)}}= \frac{2}{\rho^2 }  s_N N (1+o(1)).
   	\end{equation}
 The proof of Theorem  \ref{Haldane  Formel}b is then immediate from Corollary \ref{corfix}. 
  
  Let us describe here the strategy of our proof. We will show that the distribution of $A_{\rm eq}^{(N)}$ is sufficiently concentrated around the ``center''  $\frac{2}{\rho^2 }  s_N N$ as  $N\to \infty$.
  Throughout, we will fix a sequence $(h_N)$ obeying \eqref {ellN} such that  \eqref{moments} is satisfied.  As in the previous section we will switch to $b_N$ defined by \eqref{stob}. The Assumption  \eqref{sasymptoticsa}, which is now the standing one, thus translates into 
  \[\frac 12 +\eta \le b_N \le 1-\eta.\]
  Frequently we will suppress the subscript $N$ in $b_N$, thus denoting the sequence $s_N N$ simply by~$N^{1-b}$. We will show in the subsequent lemmata that the CASP $A^{(N)}$ needs only a relatively short time to enter a small box around  $\frac{2}{\rho^2 }  s_N N$, compared to the time it spends in this box.  The former assertion is provided by Lemmata \ref{Lemma Coming down} and \ref{Lemma going up from 1}. The behaviour of $A^{(N)}$ near the center is controlled by Proposition \ref{Lemma exp leaving time}. This is prepared by Lemmata \ref{Lemma Probability large upward jump} and~\ref{Lemma Log N Down} which bound the probability of jumps of absolute size larger than $h_N$ near the center. {\color{black} The estimates achieved in the lemmata allow to bound from above and below the process $\mathcal A:= A^{(N)}$ by processes $\mathcal A^u$ and $\mathcal A^{\ell}$ on an event of high probability. The process $\mathcal{A}^u$ moves only in the box $\mathcal{I}^u = [n^{(\gamma)}, n^{(\alpha)}]$ which is close to the center (i.e. $n^{(\gamma)} - \frac{2}{\rho^2} N^{1-b}$ and $n^{(\alpha)}  - \frac{2}{\rho^2} N^{1-b} \ll N^{1-b}$). All (upward or downward) jumps of $\mathcal A$ of size $2, \ldots, h_N$ are replaced in $\mathcal{A}^u$ by an upward jump of size $h_N$, furthermore $\mathcal{A}^u$ is reset to its starting value $n^{(\beta)}$ near the lower boundary of the box $\mathcal{I}^u$, see also Figures \ref{Aupper} and \ref{ProofLeaveCenter} for illustrations. The precise definitions of $\mathcal A^u$ and $\mathcal A^{\ell}$ are given in the proof of Proposition \ref{Lemma exp leaving time}.}
  	\begin{figure}[ht]
		\centering
		\includegraphics[width=14cm]{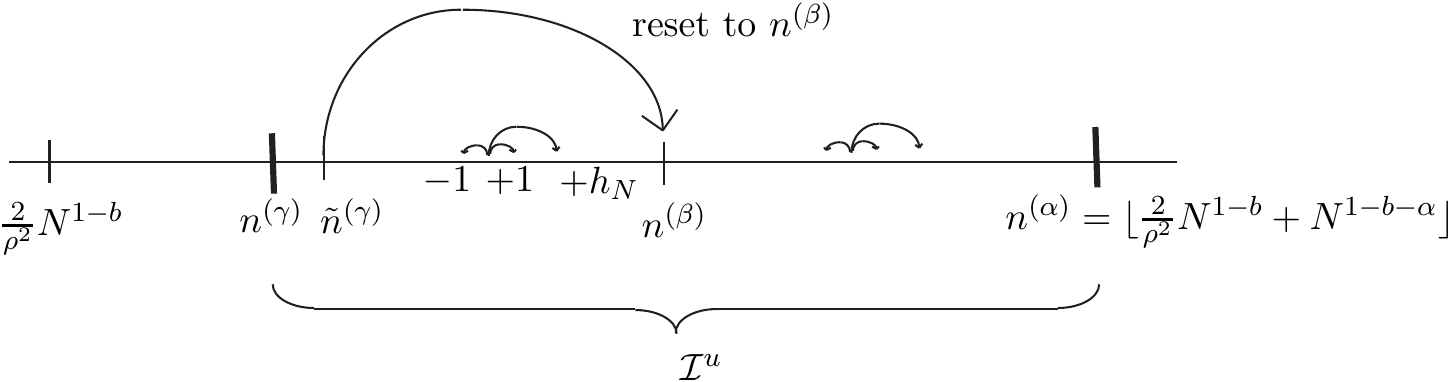}
			\caption {{\color{black}A sketch of the transition dynamics of the process $ \mathcal A^u$ which upper bounds stochastically the CASP with high probability. Mainly $ \mathcal A^u$ makes jumps only of size $\pm 1$, occasionally it jumps upwards by $h_N$ and whenever it reaches $\tilde n^{(\gamma)}$, it is reset to its starting point $n^{(\beta)}$.  The precise definitions of the quantities $n^{(\beta)}$ and $\tilde n^{(\gamma)}$ as well as of the process ${\mathcal A}^u$ are given in the proof of Proposition \ref{Lemma exp leaving time}.}
			}
			\label{Aupper}	
			\end{figure}

The following lemma controls the probability of large upward jumps of $\mathcal A$ near the center, using the construction of the branching step of the CASP described in Section \ref {secCASP}.
\begin{lemma}[Probability for large jumps upwards]\label{Lemma Probability large upward jump} \qquad \\
	Let $k = \lceil \kappa N^{1-b} \rceil$ for some $\kappa > 0$, then
	\begin{align}
	\pp{  A_{m+1} \geq A_m + h_N| A_m=k } = O( (N^{1-2b} )^{h_N}) \mbox { as } N\to \infty.
	\end{align}
	\end{lemma}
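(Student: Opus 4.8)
The plan is to reduce the statement to the branching step of the CASP and then to estimate a negative-binomial tail; notably, no assumption on the weight distribution $\mathscr L(\mathscr W^{(N)})$ is needed here. Recall from Section~\ref{secCASP} (see also Remark~\ref{dynA}) that, given $A_m=k$, the branching step produces $H=\sum_{\ell=1}^{k}G^{(\ell)}$ with independent $\mathrm{Geom}(1-s_N)$ summands, and the subsequent coalescence step places $H$ balls into boxes and counts the occupied ones. Since $H$ balls occupy at most $H$ boxes, we always have $A_{m+1}\le H$, whatever the realization of $\mathscr W^{(g-m-1)}$ is. Hence the upward-jump event is contained in a pure branching event,
\[
\{A_{m+1}\ge k+h_N\}\subseteq\{H-k\ge h_N\}=\Big\{\textstyle\sum_{\ell=1}^{k}(G^{(\ell)}-1)\ge h_N\Big\},
\]
so it suffices to bound $\pp{H-k\ge h_N}$, where $H-k$ is negative binomial with parameters $k$ and $1-s_N$ (the number of failures before the $k$-th success in $\mathrm{Bernoulli}(1-s_N)$ trials).

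Writing $t_j:=\binom{k+j-1}{j}s_N^{\,j}(1-s_N)^{k}=\pp{H-k=j}$, I would first show the tail $\sum_{j\ge h_N}t_j$ is dominated by its leading term. The consecutive ratio equals $t_{j+1}/t_j=\frac{k+j}{j+1}\,s_N$, which is decreasing in $j$ and thus maximal at $j=h_N$; since by \eqref{ellN} one has $h_N=o(\ln N)\ll N^{1-b}\asymp k$,
\[
q:=\sup_{j\ge h_N}\frac{t_{j+1}}{t_j}=\frac{k+h_N}{h_N+1}\,s_N\le\Big(\frac{k}{h_N}+1\Big)s_N=O\!\Big(\frac{N^{1-2b}}{h_N}\Big)+O(s_N)=o(1),
\]
where $N^{1-2b}\le N^{-2\eta}\to0$ because $b\ge\tfrac12+\eta$ by \eqref{sasymptoticsa} and \eqref{stob}. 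In particular $q\le\tfrac12$ for large $N$, whence $\sum_{j\ge h_N}t_j\le t_{h_N}\sum_{i\ge0}q^{\,i}\le 2\,t_{h_N}$.

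It then remains to evaluate $t_{h_N}$. Bounding $(1-s_N)^{k}\le1$ and $\binom{k+h_N-1}{h_N}\le (k+h_N)^{h_N}/h_N!$ gives $t_{h_N}\le \bigl(s_N(k+h_N)\bigr)^{h_N}/h_N!$. With $k=\lceil\kappa N^{1-b}\rceil$ and $h_N=o(\ln N)$ we have $s_N(k+h_N)=\kappa N^{1-2b}\bigl(1+O(N^{b-1}h_N)\bigr)$; and since the exponentiated error $h_N\cdot O(N^{b-1}h_N)=O(N^{b-1}h_N^{2})\to0$ (using $b\le1-\eta$ and $h_N=o(\ln N)$), the factor $(1+o(1))^{h_N}$ stays bounded. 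Combined with the elementary estimate $\kappa^{h_N}/h_N!\le e^{\kappa}$, this yields $t_{h_N}\le e^{\kappa}(1+o(1))\,(N^{1-2b})^{h_N}$, and together with the previous paragraph establishes $\pp{A_{m+1}\ge k+h_N\mid A_m=k}\le 2t_{h_N}=O((N^{1-2b})^{h_N})$, as claimed.

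The only genuinely delicate point is the control of the combinatorial prefactor once it is raised to the $h_N$-th power: one must verify that the multiplicative error in $s_N k=\kappa N^{1-2b}(1+o(1))$ is small enough (of order $N^{b-1}$) that, after exponentiation by $h_N=o(\ln N)$, it contributes only a bounded factor, and that the factorial $h_N!$ absorbs the constant $\kappa^{h_N}$. Everything else is a routine negative-binomial computation, and crucially the whole argument depends on the weights only through the trivial bound $A_{m+1}\le H$, in line with the heuristic in Section~\ref{Main Results} that upward jumps are governed solely by $s_N\ll N^{-1/2}$.
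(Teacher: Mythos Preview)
Your proof is correct and takes a genuinely different route from the paper's. Both arguments begin with the same reduction $\{A_{m+1}\ge k+h_N\}\subseteq\{H\ge k+h_N\}$, but then diverge: the paper rewrites the negative-binomial tail as a binomial lower-tail probability $\pp{S_{k'}\le ak'}$ with $k'=k+h_N$ and $a=k/k'$, and then applies the Chernoff bound, extracting the exponent by expanding the rate function $I(a)=a\ln(a/p)+(1-a)\ln((1-a)/(1-p))$. You instead work directly with the negative-binomial pmf, observe that the consecutive ratio $t_{j+1}/t_j=\tfrac{k+j}{j+1}s_N$ is decreasing so the tail is dominated (up to a factor $2$) by its first term $t_{h_N}$, and then bound $t_{h_N}$ elementarily via $\binom{k+h_N-1}{h_N}\le (k+h_N)^{h_N}/h_N!$ together with the neat observation $\kappa^{h_N}/h_N!\le e^{\kappa}$. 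Your approach is slightly more elementary (no large-deviation machinery) and makes the dependence on $\kappa$ explicit as a uniform constant $2e^{\kappa}$; the paper's Chernoff route is more mechanical once set up and would extend more readily if one wanted sharper exponents. Both rely on the same crucial inputs: $b>\tfrac12$ to make $N^{1-2b}\to 0$, and $h_N=o(\ln N)$ to ensure the $(1+O(N^{b-1}h_N))^{h_N}$ correction stays bounded.
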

\begin{proof}
	We want to estimate 
	\begin{equation}
	\pp{  A_{m+1} \geq A_m + h_N| A_m=k } \leq  \pp{  \sum_{i=1}^{k} G^{(i)} \geq k+ h_N }, 
	\end{equation}
	for independent Geom$(p)$ with $p=1-s_N$ distributed random variables $G^{(i)}$, $i \geq 1$.
	With  $k'= k +h_N$, $S_{k'}$ a  Bin$(k',p)$-distributed random variable, and $a= \frac{k}{k'}$, we can estimate the r.h.s.~from above by
	\begin{equation}
	 \pp{S_{k'} \leq k} = \pp{S_{k'} \leq ak'}, 
	\end{equation}
since the probability that at least $k'$ trials are necessary for $k$ successes can be estimated from above by the probability to have at most $k$ successes in $k'$ trials. Using the Chernoff bound for binomials we can estimate 
		\begin{align}\label{Chernoff Binomial}
	\pp{{S_{k'}} \le ak'} \leq \exp( - k'  I(a)), 
		\end{align}
	with rate function $I(a)=a \ln \left( \frac{a}{p}\right) + (1-a) \ln \left( \frac{1-a}{1-p}\right)$.
	Inserting our parameters yields
	\begin{align}
	I(a)&=\frac{k}{k + h_N} \ln \left( \frac{ k}{k + h_N} \frac{1}{1-s_N}\right)+\frac{h_N}{h_N + k} \ln \left( \frac{h_N}{k + h_N} \frac{1}{s_N}\right)   \\
	&= \frac{\kappa N^{1-b}}{\kappa N^{1-b} + o(N^{1-b})} \ln \left( 1- \frac{h_N}{\kappa N^{1-b}}(1+o(1))\right)+\frac{h_N}{\kappa N^{1-b} +o(N^{1-b})}  \ln \left( \frac{ N^b h_N }{\kappa N^{1-b}+o(N^{1-b})} \right) \\
	&=  \frac{h_N}{\kappa N^{1-b}} \ln \left( \frac 1\kappa N^{2b-1}  h_N \right) (1+o(1)) - \frac{1}{\kappa} N^{b-1}h_N (1+o(1)).
	\end{align}
	The dominating term above is $\frac{1}{\kappa} N^{b-1} h_N \ln(N^{2b-1}) $ and plugging this back into \eqref{Chernoff Binomial} one obtains
	\begin{align}
	\pp{S_{n'} \leq an'} &\leq \exp( - \kappa N^{1-b} \frac{1}{\kappa} N^{b-1} h_N \ln(N^{2b-1}) ( 1+o(1) ) ) \\
	&=\exp(- h_N \ln (N^{2b-1})) (1+o(1) ) =  (N^{1-2b})^{h_N} (1+o(1)).
	\end{align}
	
\end{proof}

Next, we set out to bound the probability for downward jumps of size at least $h_N$ near the center. In view of the construction of the coalescence step described in Section \ref {secCASP} this is settled by the following lemma. 

\begin{lemma}[Probability for large jumps downwards] \label{Lemma Log N Down}\qquad  \\
Let $\mathscr{W}^{(N)}=(W_1^{(N)},...,W_N^{(N)})$ be as in Theorem 3.5b, let $\kappa>0$ and $k^{(N)} \leq \kappa N^{1-b}$. 
For $N\in \mathbb N$ sort \mbox{$k^{(N)}$} balls independently into $N$ boxes, such that any given ball is sorted into box $i$ with probability  $W_i^{(N)}$, $i\in [N]$. Then the probability that no more than $k^{(N)}-h_N$ boxes are occupied  is
\begin{equation}
O\left( h_N^4 N^{1-2b} \right)^{h_N} \mbox{ as } N\to \infty. 
\end{equation}
\end{lemma}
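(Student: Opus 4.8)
\emph{The plan} is to reformulate the event combinatorially and then run a first--moment (union--bound) argument over collision patterns, controlling each pattern's probability with the moment bound \eqref{moments}. Let $C$ denote the number of occupied boxes and set $R := k^{(N)} - C$, the number of \emph{repeat balls}, i.e.\ balls that land in a box already containing a ball of smaller index (so that each occupied box contains exactly one non-repeat ball). The event in question is precisely $\{R \ge h_N\}$. Since $\1_{\{R\ge h_N\}}\le \binom{R}{h_N}$, Markov's inequality gives $\pp{R\ge h_N}\le \EE{\binom{R}{h_N}}$, and I would expand
\[
\EE{\binom{R}{h_N}}=\sum_{\{b_1,\dots,b_{h_N}\}\subset[k^{(N)}]}\pp{b_1,\dots,b_{h_N}\text{ are all repeats}}.
\]
For a fixed $h_N$-set of candidate repeat balls, each $b_i$ shares its box with some strictly earlier ball $j_i$, so a union bound over the choices of partners $j_i<b_i$ bounds the summand by $\sum_{j_1,\dots,j_{h_N}}\pp{b_i,j_i\text{ in the same box }\forall i}$.

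Now fix such a configuration of $h_N$ edges $(b_i,j_i)$. Since every edge joins a ball to a strictly smaller one, these edges form a forest on the balls they touch; let its connected components have sizes $c_1,\dots,c_r$ with $\sum_{t}(c_t-1)=h_N$, hence $\sum_t c_t=h_N+r$ and $r\le h_N$. Conditionally on $\mathscr W$ the balls are placed independently, so the probability that every component is monochromatic equals $\prod_{t=1}^r\sum_{i=1}^N W_i^{c_t}$; averaging over $\mathscr W$ leaves $\EE{\prod_t\sum_i W_i^{c_t}}$. The heart of the proof is to bound this expectation. Expanding, $\EE{\prod_t\sum_i W_i^{c_t}}=\sum_{\ell_1,\dots,\ell_r\in[N]}\EE{\prod_t W_{\ell_t}^{c_t}}$, and grouping the indices $\ell_t$ according to coincidence reduces each term to $\EE{\prod_{s}W_{i_s}^{d_s}}$ over \emph{distinct} indices, where the $d_s$ are sums of some of the $c_t$ and $\sum_s d_s=h_N+r$. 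For distinct indices H\"older's inequality with the conjugate exponents $(h_N+r)/d_s$ yields $\EE{\prod_s W_{i_s}^{d_s}}\le \EE{W_1^{\,h_N+r}}$; since $h_N+r\le 2h_N$, condition \eqref{moments} applies and gives $\EE{W_1^{\,h_N+r}}\le (Kh_N/N)^{h_N+r}$. The all--distinct index pattern (at most $N^r$ choices) dominates the sum over set partitions because $r\le h_N=o(\ln N)$, and I obtain
\[
\EE{\prod_{t=1}^r\sum_{i=1}^N W_i^{c_t}}\le 2\,(Kh_N)^{h_N+r}\,N^{-h_N}\le 2\,(Kh_N)^{2h_N}\,N^{-h_N}.
\]

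It remains to collect the combinatorial factors. There are $\binom{k^{(N)}}{h_N}$ ways to choose the repeat balls and at most $(k^{(N)})^{h_N}$ ways to choose the partners, so with $k^{(N)}\le\kappa N^{1-b}$ and $1/h_N!\le (e/h_N)^{h_N}$,
\[
\pp{R\ge h_N}\le \frac{(k^{(N)})^{2h_N}}{h_N!}\,2\,(Kh_N)^{2h_N}N^{-h_N}\le 2\left(e\kappa^2K^2 h_N\,N^{1-2b}\right)^{h_N},
\]
using $2(1-b)-1=1-2b$. This is $O\!\left(h_N^4 N^{1-2b}\right)^{h_N}$, as claimed (indeed already $O(h_N\,N^{1-2b})^{h_N}$, so the stated $h_N^4$ is a comfortable over-estimate). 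One should note that the extremal profile is $r=h_N$ (disjoint pairs): a profile with $r$ components contributes an $N$-exponent $(1-b)r-bh_N$, which is maximal at $r=h_N$ because $b>1/2$, so larger components are strictly subdominant.

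The main obstacle is the expectation $\EE{\prod_t\sum_i W_i^{c_t}}$: the weights are only exchangeable, not independent (they sum to $1$), so one cannot simply factorize over components. The H\"older reduction of every joint moment to a single marginal moment $\EE{W_1^{\,h_N+r}}$ of order at most $2h_N$ is exactly what makes \eqref{moments} the right hypothesis, and it is precisely here that higher moments of $W_1$ must be controlled, as anticipated in the proof sketch. A secondary technical point is to check that the all--distinct index pattern dominates the partition sum, which relies on $h_N\ll\ln N$ so that the Bell-type factor $\sum_{q}S(r,q)N^{q-r}$ stays bounded.
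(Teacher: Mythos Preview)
Your argument is correct and follows the same overall strategy as the paper---a union bound over collision patterns followed by the moment bound \eqref{moments}---but the execution differs in two places worth noting. First, the paper parametrizes by the \emph{occupation vector} $\beta=(\beta_1,\dots,\beta_\ell)$ of the multiply--occupied boxes (with $\sum_i\beta_i-\ell=h_N$), whereas you parametrize by the set of $h_N$ repeat balls together with a witness partner for each, which yields a forest with component sizes $c_t$ summing to $h_N+r$. Second, and more substantively, to reduce the joint moment $\EE{\prod_s W_{i_s}^{d_s}}$ to the single moment $\EE{W_1^{h_N+r}}$ the paper invokes \emph{negative association} of the paintbox weights (citing Joag-Dev and Proschan) to factorize, then Jensen to collapse; you instead apply H\"older with exponents $(h_N+r)/d_s$, which is more elementary and requires no structural property of $\mathscr W$ beyond exchangeability. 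Your route also produces the sharper exponent $h_N$ in place of $h_N^4$.

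One small simplification: your discussion of the ``all--distinct index pattern dominating'' and the resulting factor $2$ is unnecessary. The H\"older bound $\EE{\prod_t W_{\ell_t}^{c_t}}\le (Kh_N/N)^{h_N+r}$ holds for \emph{every} tuple $(\ell_1,\dots,\ell_r)\in[N]^r$, coincidences or not, so summing over all $N^r$ tuples already gives $\EE{\prod_t\sum_i W_i^{c_t}}\le N^r(Kh_N/N)^{h_N+r}=(Kh_N)^{h_N+r}N^{-h_N}$ directly; the Bell--number detour and the hypothesis $h_N=o(\ln N)$ are not needed at this step.
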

\begin{proof}
   We will suppress the superscript $(N)$ and write $k:= k^{(N)}$, $\mathscr W := \mathscr{W}^{(N)}$. For $h:= h_N$ let~$p_h$ be the probability of the event that no more than $k-h$ boxes are occupied.  
    This is equal to the event that at least $h$ collisions occur, where we think of the balls with numbers $1,\ldots,k$ being subsequently sorted into the boxes  and say that the ball with number $\nu$ produces a collision if it lands in an already occupied box.    In the following we record the occupation numbers of (only) those boxes that receive more than one ball. These are of the form $\beta=(\beta_1, ...,\beta_\ell) \in \{2, ...,h+1\}^\ell$ with  $\ell \in \{1, ..., h\}$ and $\beta_1+...+\beta_\ell -\ell= h$. For a given $\beta$ of this form, and $\ell$ given boxes with  $|\beta|:= \beta_1+ ...+ \beta_{\ell}$, assume that $\beta_1$ balls are sorted into the first box, $\beta_2$ balls into the second box, etc, and the remaining $k- |\beta|$ balls are sorted into arbitrary boxes (so that, as required, the number of occupied boxes is at most $\ell + k - |\beta| = k- h$). 
   Given the weights $W_1, \dots , W_N$, the probability to sort the first $\beta_1$ balls into box 1, the following $\beta_2$ balls into box $2,\dots ,$ and finally $\beta_\ell$ balls into box $\ell$ is
    \begin{equation*}
    \prod_{i=1}^{\ell} W_i^{\beta_i}.
    \end{equation*}
   
   There are $\frac{N!}{(N-\ell)!}$ many possibilities to choose $\ell$ different boxes out of $N$. Furthermore, there 
   are $\binom{k}{\beta_1,\ldots,\beta_\ell,k-|\beta|}$ many possibilities to choose $|\beta|$ many balls out of $k$ balls 
   and sort these balls into $\ell$ boxes, such that $\beta_i$ balls are sorted into box $i$. Hence, due to exchangeability of the weights $W_1, \dots , W_N$ we get
    \begin{equation}\label{JumpAtLeastJ}
	p_h\leq  \sum_{\beta\in \mathcal B} \EE{\prod_{i=1}^{\ell(\beta)} W_i^{\beta_i}}  \frac{N!}{(N-\ell(\beta))!} \binom{k}{\beta_1,\ldots,\beta_\ell,k-|\beta|},
	\end{equation}
   where 
    \begin{equation} \mathcal B:= \bigcup_{\ell\in\{1,\ldots, h\}} \{\beta=(\beta_1,\ldots, \beta_\ell): \beta_1+...+\beta_\ell -\ell= h \mbox{ and } \beta_1 \geq \beta_2 \geq ...\geq \beta_\ell\}
\end{equation}
 and $\ell = \ell(\beta)$ denotes the length of the vector $\beta \in \mathcal B$.
 
  To obtain an upper bound of the r.h.s.~of  \eqref{JumpAtLeastJ} we estimate the moments $\EE{ \prod _{i=1}^{\ell} W_i^{\beta_{i}}}$. 
   Since $(W_1,W_2,...,W_N)$ are negatively associated \cite{Joag-Dev1983}, we can use the property 2 in \cite{Joag-Dev1983} of negatively associated random variables, which reads   
 \begin{align}
	\EE{ \prod _{i=1}^{\ell} W_i^{\beta_{i}}} 
	\leq  \prod _{i=1}^{\ell} \EE{ W_i^{\beta_i}}.
	\end{align}
   Applying Jensen's inequality we can estimate for $\beta_1 \geq \beta_2$
	\begin{eqnarray*}
	\EE{W_1^{\beta_1 }} \EE{W_2^{\beta_2}} &\leq& \EE{W_1^{\beta_1 }} \EE{W_2^{\beta_1}}^{\frac{\beta_2}{\beta_1}}= \EE{W_1^{\beta_1}}^{1+\frac{\beta_2}{\beta_1}} \\
	&\leq& \EE{W_1^{\beta_1+\beta_2}}.
	\end{eqnarray*}
Iterating the above argument, we obtain
	\begin{equation}\label{Iteration Jensen}
	 \prod _{i=1}^{\ell} \EE{ W_i^{\beta_i}}  \leq 
	 \EE{W_1^{|\beta|}}.
	\end{equation}
	\blue{With regard to \eqref {JumpAtLeastJ} and  \eqref {Iteration Jensen} we will now analyse the quantities
	\begin{align}
	a_\beta:= \EE{W_1^{|\beta|}} \frac{N!}{(N-\ell(\beta))!},\qquad \beta\in \mathcal B. \label{probjump2}
	\end{align}
	For brevity we write $\ell(\beta)=:\ell$. Since $|\beta| = h + \ell$ and $1\le \ell \le h$, we obtain from  \eqref{moments} for $N$ sufficiently large and all $\beta \in \mathcal B$ the estimate
\begin{equation}\label{aest}
a_\beta \le  \frac{(Kh)^{h+\ell}}{N^hN^\ell} N(N-1)\cdots (N-\ell+1) \le   \frac{(Kh)^{2h}}{N^h}.
\end{equation}}
	For the rightmost term in \eqref{JumpAtLeastJ} we have the estimate 
	\begin{equation}\label{multcoeff}
	\binom{k}{\beta_1,\ldots,\beta_\ell,k-|\beta|} \le  \binom{k}{2,...,2,k-2h_N}\le (\kappa N^{1-b})^{2h_N} 
	\end{equation} 
	 and the number of occupation vectors $\beta$ appearing in the sum in \eqref{JumpAtLeastJ} (i.e. the cardinality of~$\mathcal B$) can be estimated from above by  $(h_N +1)^{  h_N}$.
	Hence we obtain from  \eqref{JumpAtLeastJ}, \eqref{aest} and \eqref{multcoeff},  
	{\color{black}
	\begin{align*}
	 p_{h_N} &= O\left((\kappa N^{1-b})^{2h_N}N^{-h_N} (K h_N)^{2h_N}(h_N+1)^{ h_N}\right) \\	
	 &= O \left( (N^{1-2b}h_N^4 )^{h_N}\right).\nonumber
	\end{align*}
	}
\end{proof}

Building on the previous two lemmata, the next result shows that the CASP does not leave the central region up to any polynomially long time{\color{black}, i.e. within a time frame of order $N^c$ for any $c>0,$} with high probability.

\begin{proposition}[CASP stays near the center for a long time] \label{Lemma exp leaving time}\qquad \\
	Consider $\alpha$, $\beta$ with  $ 0<\alpha<\beta < \frac{2b-1}{3}$, let
	\[\mathcal{I}:=\left[\left\lceil \frac{2}{\rho^2 } N^{1-b} -  N^{1-b-\alpha} \right\rceil , \left\lfloor \frac{2}{\rho^2 } N^{1-b} +  N^{1-b-\alpha} \right\rfloor \right]\] and define $\tau_{\mathcal{I}}:=\inf \{ m \geq 0 : A_m \notin \mathcal{I} \}$. Then for all $\theta>0$ and all $\eps>0$
	\begin{equation}
	\pp{\tau_{\mathcal{I}} \leq N^\theta | A_0=k} = O((N^{1-2b+\eps})^{h_N})   \quad \mbox{ as } N\to \infty \label{ProbLeave}
	\end{equation}
	uniformly in 
	$k \in  [ \lceil \frac{2}{\rho^2 } N^{1-b} - N^{1-b-\beta} \rceil , \lfloor \frac{2}{\rho^2 } N^{1-b} + N^{1-b-\beta} \rfloor ]$.
\end{proposition}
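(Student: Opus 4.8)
The plan is to control leaving $\mathcal I$ through its upper and through its lower endpoint separately; by the symmetry of the argument I only describe the upper escape, the lower one being handled identically with a lower-bounding process $\mathcal A^\ell$ built from the mean-reverting \emph{upward} drift that the CASP has below the center. Write $c:=\frac 2{\rho^2}N^{1-b}$ for the center. By Lemma \ref{Lemma jumpsize=1}, for $A_m=k=c+x$ with $0\le x=o(N^{1-b})$ the one-step up- and down-probabilities are $ks_N$ and $\binom k2\frac{\rho^2}{N}$, and $ks_N-\binom k2\frac{\rho^2}N=-s_Nx\,(1+o(1))$. Hence above the center $\mathcal A$ feels a restoring drift of order $s_N\,(A_m-c)$ per step, while in a single step $A_m$ changes by $\pm1$ with total probability $\Theta(N^{1-2b})$ and makes a larger move only with the small probabilities quantified below.

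First I would split off the event $\mathcal G^c$ that $\mathcal A$ makes a jump of absolute size at least $h_N$ at some time $m\le N^\theta$ while still lying in (a fixed enlargement of) $\mathcal I$. Throughout that region $A_m=\Theta(N^{1-b})$, so Lemma \ref{Lemma Probability large upward jump} and Lemma \ref{Lemma Log N Down} bound the probability of such a jump in one step by $O\big((h_N^4N^{1-2b})^{h_N}\big)$. Using $h_N^4\le(\ln N)^4=N^{o(1)}$ and $N^\theta=(N^{\theta/h_N})^{h_N}$ with $\theta/h_N\to0$, a union bound over the at most $N^\theta$ steps yields
\[
\mathbb P(\mathcal G^c)=O\big((N^{1-2b+\eps})^{h_N}\big),
\]
which already matches the claimed error term. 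It remains to show that on $\mathcal G$ the process cannot reach the upper endpoint $n^{(\alpha)}:=\lfloor c+N^{1-b-\alpha}\rfloor$ within $N^\theta$ steps, except with probability of at most this order.

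To this end I would use the dominating chain $\mathcal A^u$ indicated before the statement and in Figure \ref{Aupper}. It lives in the box $\mathcal I^u=[n^{(\gamma)},n^{(\alpha)}]$ and keeps the genuine state-dependent $\pm1$-transition probabilities of $\mathcal A$ (hence the restoring drift), but every jump of $\mathcal A$ of size $2,\dots,h_N$ is replaced by a deterministic $+h_N$-step, and $\mathcal A^u$ is reset to the fixed level $n^{(\beta)}:=\lceil c+N^{1-b-\beta}\rceil$ whenever it reaches the low level $\tilde n^{(\gamma)}$. Both modifications can only raise the chain, so a step-by-step coupling gives $A_m\le A^u_m$ for all $m\le N^\theta$ on $\mathcal G$, as long as $A$ stays above $\tilde n^{(\gamma)}$ (below which the lower boundary is the concern of $\mathcal A^\ell$). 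The resets turn $\mathcal A^u$ into a renewal process whose excursions start at $n^{(\beta)}$ and terminate either by a reset at $\tilde n^{(\gamma)}$ or by an escape at $n^{(\alpha)}$.

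The final step is to bound the escape probability $p_{\mathrm{esc}}$ of a single excursion and to multiply by the number of excursions, which is at most $N^\theta$. Since $n^{(\beta)}$ sits \emph{above} the center, escaping means climbing the gap $N^{1-b-\alpha}-N^{1-b-\beta}=\Theta(N^{1-b-\alpha})$ against the restoring drift. I would estimate $p_{\mathrm{esc}}$ by a gambler's-ruin computation for the birth--death part of $\mathcal A^u$, whose reversible measure is governed by the ratios $\frac{ks_N}{\binom{k+1}2\rho^2/N}=\frac{c}{k+1}$, i.e.\ by the Poisson$(c)$ weights $c^k/k!$; a Stirling estimate then gives $p_{\mathrm{esc}}=\exp\big(-\Theta(N^{1-b-2\alpha})\big)$, while the contribution of the $+h_N$-jumps is even smaller, since already one such jump costs a factor $O(N^{-2(2b-1)})$. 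The hypotheses $\tfrac12+\eta\le b\le1-\eta$ and $0<\alpha<\beta<\tfrac{2b-1}3$ are tailored so that $N^{1-b-2\alpha}$ is a positive power of $N$ (and so that $\mathcal I^u$ stays within $o(N^{1-b})$ of $c$, keeping the large-jump bounds of Lemmata \ref{Lemma Probability large upward jump} and \ref{Lemma Log N Down} valid throughout the box); consequently $N^\theta p_{\mathrm{esc}}$ is super-polynomially small, hence $o\big((N^{1-2b+\eps})^{h_N}\big)$, and $\mathbb P(\mathcal G^c)$ remains the dominant term. The main obstacle I anticipate is making the domination coupling fully rigorous while simultaneously accommodating the $\pm1$-steps, the $+h_N$-replacement and the resets, and then pushing the gambler's-ruin estimate through with enough uniformity in the starting point $k$.
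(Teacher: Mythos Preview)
Your scaffold matches the paper's: split off jumps of absolute size $\ge h_N$ (your $\mathcal G^c$ is the paper's $F_N^c$), sandwich $\mathcal A$ by a process $\mathcal A^u$ that replaces medium-sized jumps by $+h_N$ and is reset to $n^{(\beta)}$ at $\tilde n^{(\gamma)}$, then bound the escape probability of a single excursion and multiply by the (at most polynomial) number of excursions.

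The execution differs in two respects. First, the paper does \emph{not} keep the state-dependent $\pm1$-probabilities: it freezes them at the value they take at $n^{(\gamma)}$, the bottom of the box, where the restoring drift is weakest. Between resets $\bar{\mathcal A}^u$ is therefore a random walk with i.i.d.\ increments $X_i$, which makes the monotone coupling with $\mathcal A$ immediate (the obstacle you anticipate at the end disappears) and reduces the escape question to a one-dimensional random-walk estimate. Second, instead of a gambler's-ruin/Stirling computation, the paper bounds $p_{\rm hit}$ via the exponential supermartingale $Y_m=\exp\bigl(\lambda_N\sum_{i\le m}X_i\bigr)$ with $\lambda_N=N^{-2\gamma}$: one checks $\EE{e^{\lambda_N X_1}}=1-\tfrac{\rho^2}4 N^{-3\gamma}+O(N^{1-2b})<1$, where the $O(N^{1-2b})$ term already absorbs the $+h_N$-jump, and this is exactly where the constraint $\gamma<(2b-1)/3$ enters. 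The $\pm1$ steps and the $+h_N$ steps are thus handled in one stroke, whereas your proposal to run the gambler's ruin on the birth--death part and account for the $+h_N$-contribution separately would need an additional argument to be made rigorous.

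One concrete error: the hypotheses do \emph{not} force $1-b-2\alpha>0$. For $b>5/7$ one can take $\alpha$ close to $(2b-1)/3$ and obtain $1-b-2\alpha<0$ (e.g.\ $b=4/5$, $\alpha=3/20$, giving $1-b-2\alpha=-1/10$); your Stirling estimate then yields only $p_{\rm esc}=\exp(-o(1))$, which is not small enough to beat the $N^\theta$ excursions.
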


	\begin{figure}[ht]
		\includegraphics[width=13cm]{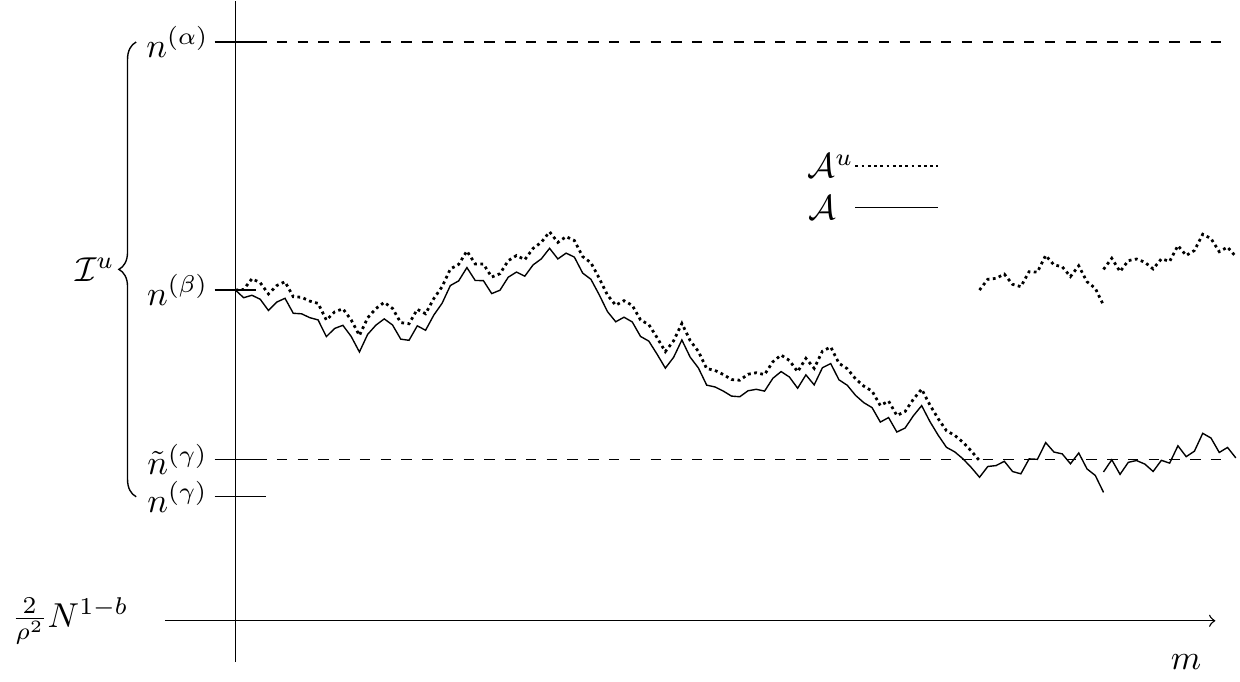}
			\caption {
			 An example of a realisation of the processes $\mathcal{A}$ and $\mathcal{A}^u$, displaying that  $\mathcal{A}^u$ dominates~$\mathcal{A}$ as long as it is below the level $n^{(\alpha)}$. Note that $\mathcal{A}^u$ is reset to $n^{(\beta)}$ whenever it hits the level $\tilde{n}^{(\gamma)}$; see also Figure \ref{Aupper}.
			}
			\label{ProofLeaveCenter}	
			\end{figure}

\begin{proof}
	To show the above claim we bound stochastically $\mathcal{A}$ from above and from below by simpler processes, which in certain boxes close to the center of attraction of $\mathcal{A}$ follow essentially a time changed random walk dynamics with constant drift. In the first part of the proof we will construct a time-changed Markov chain $\mathcal A^u$ that dominates $\mathcal A = (A_m)_{m \geq 0}$ from above for a sufficiently long time. This construction will rely on Lemma \ref{Lemma Probability large upward jump}.  In this first part we will give all details; in the second part of the proof we will indicate how an analogous construction can be carried out ``from below'', then making use of \ref{Lemma Log N Down}.
	\\
	1. Let $\gamma \in (\beta,  \frac{2b-1}{3})$  and write, in  accordance with Figure \ref{Aupper}, 
\[n^{(\zeta)} :=  \lceil  \tfrac{2N^{1-b}}{\rho^2} + N^{1-b -\zeta} \rceil, \qquad \zeta \in\{\alpha , \beta, \gamma \}.\]
Observing that $n^{(\gamma)} <  n^{(\beta)} < n^{(\alpha) }$, we  consider the box
\[\mathcal{I}^u:= \left[n^{(\gamma)}, n^{(\alpha) }\right],\]
and take $n^{(\beta)}$ as the starting point of both $\mathcal A$ and $\mathcal A^u$. The process $\mathcal{A}^u$ will be such that $\mathcal{A}^u$ makes only jumps of size $-1,1,h_N$ and it is re-set to its starting point $n^{(\beta)}$ as soon as it hits the level
\[\tilde n^{(\gamma)} := n^{(\gamma)} + h_N.\]
Consider the following Markov chain ${\mathcal{ \bar{A}}}^u$.
We decree that within the box $(\tilde n^{(\gamma)},n^{(\alpha) }]$ the process
${\mathcal{ \bar{A}}}^u$ makes only jumps $-1,+1$ and $+h_N$. Here, the probabilities for jumps
$-1,+1$ of ${\mathcal{ \bar{A}}}^u$ from an arbitrary state in $(\tilde n^{(\gamma)},n^{(\alpha) }]$ 
are set equal to the  probabilities  for jumps $-1,+1$ of $\mathcal A$ from the state $ n^{(\gamma)}$, and
the probability for a jump $+h_N$  of ${\mathcal{ \bar{A}}}^u$ from an arbitrary state in $(\tilde n^{(\gamma)},n^{(\alpha) }]$ is set equal to the  probability  of a jump of $\mathcal A$ from the state $ n^{(\gamma)}$ that has an absolute size larger than 1. 
More formally, we define
\begin{equation}
c_N^{(\gamma)}= \mathbbm{P}\big[|A_{m+1} -A_{m}|>0 | A_m= n^{(\gamma)}\big]
\end{equation}
and observe that
\begin{equation} 
c_N^{(\gamma)}= \mathbbm{P}\big[|A_{m+1} -A_{m}|>0 | A_m= n^{(\gamma)}\big]= s_N n^{(\gamma)} + \binom{n^{(\gamma)}}{2} \frac{\rho^2}{N}(1 + o(1)).
\end{equation}

We define the jump probabilities as follows:
	\begin{align*}
	\mathbbm{P}(\bar{A}^u_{i} = k +1 \mid  \bar{A}^u_{i -1} =k)& = \frac 1{ c_N^{(\gamma)} } s_N n^{(\gamma)}, \qquad k> \tilde n^{(\gamma)},
	 \\
	\mathbbm{P}(\bar{A}^u_{i} =k+ h_N \mid \bar{A}^u_{i -1} =k) & = 1 -  \frac {1}{c_N^{(\gamma)}} \left(s_N n^{(\gamma)} + \binom{n^{(\gamma)}}{2} \frac{\rho^2}{N}\right), \qquad k> \tilde n^{(\gamma)}\\
	\mathbbm{P}(\bar{A}^u_{i} = k -1 \mid \bar{A}^u_{i -1} =k)& =   \frac {1}{c_N^{(\gamma)}} \binom{n^{(\gamma)}}{2} \frac{\rho^2}{N}, \qquad k > \tilde n^{(\gamma)},\\
	\mathbbm{P}(\bar{A}^u_{i} =n^{(\beta)}| \bar{A}^u_{i -1} = \tilde{n}^{(\gamma)})& =1.
	\end{align*}
Note that	
\begin{align}
		\begin{split}
	&\frac {1}{c_N^{(\gamma)}} s_N n^{(\gamma)}= \frac{1}{2}\left(1 - \frac{\rho^2}{4} N^{-\gamma}\right)(1+O(s_N n^{(\gamma)} )),
	\label{boxdrift} \\
	&\frac {1}{c_N^{(\gamma)}} \binom{n^{(\gamma)}}{2} \frac{\rho^2}{N}= \frac{1}{2}\left(1 + \frac{\rho^2}{4} N^{-\gamma}\right)(1+O(s_N n^{(\gamma)} )),\\
	&1 -  \frac {1}{c_N^{(\gamma)}} \left(s_N n^{(\gamma)} + \binom{n^{(\gamma)}}{2} \frac{\rho^2}{N}\right) = O(N^{1-2b}), 
	\end{split}
\end{align}
	which results in a small downwards drift of $\mathcal{\bar{A}}^u$ in $\mathcal{I}^u$.
	The process $\mathcal{A}^u$ is defined as follows. Denote by $\tau_i$ the time of the $i$-th non-trivial
	jump (that is a jump of size $\neq 0$) of $\mathcal{A}$ for $i\geq 1$ and let $\tau_0=0$. We set for 
	$\tau_{i-1}\leq m \leq \tau_{i} -1$, with $i\geq 1$, 
	\begin{align*}
	 A^u_m := \bar{A}^u_{i-1}.
	\end{align*}
    
	Putting $T_N:= \lceil N^\theta\rceil$, we now consider the events
	\[E_N:= \{\mathcal A^u \mbox{ has not left  } \mathcal{I}^u \mbox{ by time  } T_N\}\]
	and 
	\[F_N:= \{\mathcal A \mbox{ has not performed jumps of absolute size larger than  }h_N \mbox{ by time  } T_N\}.\]
	We can now couple  $\mathcal A$ and $\mathcal A^u$ such that on the event $E_N \cap F_N$ and for all $m\le T_N$ we have $A_m \le A_m^u$. In order to show that the probability of the event $\{\mathcal A$ reaches $n^{(\alpha)}$ before time $T_N\}$ is bounded by the r.h.s.~of \eqref{ProbLeave} it thus suffices to show that  
	\begin{equation}\label{both}
	\mathbb P(E_N^c)= O((N^{1-2b +\eps})^{h_N}) \quad \mbox{ and } \quad \mathbb P(F_N^c)= O((N^{1-2b +\eps})^{h_N})  \quad\mbox{as } N\to \infty.
	\end{equation}
 From Lemmata \ref{Lemma Probability large upward jump} and  \ref{Lemma Log N Down} it is obvious that
	\begin{equation}\label{PF}
	\mathbb P(F_N^c) =O(T_N (h_N^4 N^{1-2b})^{h_N} ).
	\end{equation} 	
We claim that there exists a $\delta >0$ such that
	\begin{equation}\label{PE}
	\mathbb P(E_N^c) = O(\exp(-N^{\delta})).
	\end{equation}
Write $p_{\rm hit}$ for the probability that $\mathcal A ^u$, when started from $n^{(\beta)}$, hits (or crosses) $n^{(\alpha)}$ before it hits $\tilde n^{(\gamma)}$.
The jump size of the process $\mathcal{A}^u$ is at least -1 at each generation. Therefore, at least $N^{1-b-\beta}$ generations are necessary to reach  the level $n^{(\gamma)}$ when starting from level $n^{(\beta)}$. Thus, with the $\theta$ given in the proposition, within $N^\theta$ generations the process $\mathcal{A}^u$ makes at most $N^\theta/N^{1-b-\beta}$ transitions from $n^{(\beta)}$ to $n^{(\gamma)}.$\\ 	
The probability that $\mathcal{A}^u$ crosses $n^{(\alpha)}$ within a single excursion from $n^{(\beta)}$ that reaches $\tilde n^{(\gamma)}$ is obviously bounded from above by the probability that the process $\widetilde{A}_m:=\sum_{i=1}^{m} X_i $, $m=0,1,\ldots$, crosses the level $n^{(\alpha)}$, where the $X_i$ are i.i.d. and distributed as the jump sizes of the process $\mathcal{\bar{A}}^u$ in a state $x > \tilde{n}^{\gamma}$; thus, other than the process $\mathcal{\bar{A}}^u$ the process $\widetilde{A}$ is not reset to $n^{(\beta)}$ at an attempt to cross  $\tilde n^{(\gamma)}$. 
For $\lambda_N = N^{-2\gamma}$ one has 
\begin{align*}
\EE{\exp(\lambda_N X_1)} = 1- \frac{\rho^2}{4} N^{-3\gamma} + O(N^{1-2b}) + o(N^{-3\gamma}) <1,
\end{align*}
for $N$ large enough, since $\gamma < \frac{2b-1}{3}$. Therefore the process $Y_m = \exp \left( \lambda_N \sum_{i=1}^{m} X_i\right)$ is a supermartingale. Define $\tilde{\tau} = \inf \{ m \geq 0 : \widetilde{A}_m \geq n^{(\alpha)}  \}$, then by the Martingale Stopping Theorem
\begin{align}
n^{(\beta)} = \EE{Y_0} \geq \EE{Y_{m \wedge \tilde{\tau }}} \geq \EE{Y_{\tilde{\tau }} \1_{ \{  \tilde{\tau} <m \} } } \geq \exp( \lambda_N n^{(\alpha)}) \PP(\tilde{\tau} <m),
\end{align}
which yields the bound  $p_{\rm hit} \leq n^{(\beta)} \exp(-\lambda_N n^{(\alpha)} )$. Thus, we can estimate
\begin{align}
\PP(E_N) \geq (1-p_{\rm hit})^{N^{\theta -1+b+\beta}},
\end{align}
which proves \eqref{PE}. Clearly, \eqref{PE} and \eqref{PF} imply \eqref{both}, which completes the first part of the proof.\\\\
	2. It remains to prove also the ``lower part''  of  \eqref{ProbLeave}, i.e. to control the time it takes  $\mathcal{A}$ to leave $\mathcal{I}$ in downwards direction. We argue similarly by defining a process $\mathcal{A}^\ell$ which bounds $\mathcal{A}$ from below in a box $\mathcal{I}^{\ell}$ correspondingly chosen to the box $\mathcal{I}^u$. This process is again a Markov chain that makes   jumps of size 1 and -1 and (rarely) jumps of size $-h_N$ and whose drift coincides with that of $\mathcal{A}$ at the upper boundary of the box $\mathcal{I}^\ell$.  Due to Lemma~\ref{Lemma Log N Down} jumps downwards at least of size  $h_N$ occur with exponentially small probability and hence, these jumps can be ignored in the time frame of interest and $\mathcal{A}^\ell$ is stochastically dominated by $\mathcal{A}$ with sufficiently high probability.
\end{proof}
We now show that the time to reach the center from state $N$ does not grow faster than polynomially in $N$, i.e. is of the order $N^{c}$ for some $c>0$ with high probability.

\begin{lemma}[Coming down from $N$] \label{Lemma Coming down} \qquad \\
	Let $0< \varepsilon <\frac{2b-1}{3}$, $B:=[\frac{2}{\rho^2 }  N^{1-b} - 2 N^{1-b-\eps },\frac{2}{\rho^2 }  N^{1-b} + 2 N^{1-b-\eps }]$ and $\tau_B := \inf \{m \geq 0 : A_m \in B\}$. Then for  any $\eps < \eps'< \frac{2b-1}{3}$ and  $0<\delta\leq \eps' -\eps$
	\begin{equation}
	\pp{ \tau_B \geq N^{2b+2\eps'}\mid A_0=N} = O(\exp( - N^{\delta})).
	\end{equation}
\end{lemma}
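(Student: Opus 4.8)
The plan is to show that, started at its maximal value $N$, the CASP feels a strictly negative drift everywhere above the upper endpoint of $B$, and to convert a bound on the expected hitting time into the exponential estimate by the same independent-restart device already used in Lemma~\ref{Lemma CASP comes down}. Write $L_{\rm top}:=\lfloor \tfrac{2}{\rho^2}N^{1-b}+2N^{1-b-\eps}\rfloor$ and $L_{\rm bot}:=\lceil \tfrac{2}{\rho^2}N^{1-b}-2N^{1-b-\eps}\rceil$ for the endpoints of $B$, and set $\sigma:=\inf\{m\ge 0: A_m\le L_{\rm top}\}$. Since the chain starts above $B$, its first entry into $B$ coincides with $\sigma$ unless at time $\sigma$ it overshoots $B$ downwards, which I treat separately at the end.

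First I would establish a uniform drift bound: there is $c>0$ such that for all $k\ge L_{\rm top}$,
\begin{equation}
\EE{A_{m+1}-A_m\mid A_m=k}\le -c\,N^{1-2b-\eps}. \label{planA1}
\end{equation}
For the moderate range $L_{\rm top}\le k\le CN^{1-b}$ this comes from the transition probabilities of Lemma~\ref{Lemma jumpsize=1}: the drift equals $ks_N-\binom{k}{2}\tfrac{\rho^2}{N}$ up to an error of order $o(N^{1-2b-\eps})$, once the contribution of jumps of absolute size $\ge 2$ is controlled (for size between $2$ and $h_N$ through \eqref{TransitionProbCASP2}--\eqref{TransitionProbCASP3}, for size $\ge h_N$ through Lemmata~\ref{Lemma Probability large upward jump} and \ref{Lemma Log N Down}, which are much smaller); since $k\ge L_{\rm top}$ sits a relative amount $\asymp N^{-\eps}$ above the balance point $\tfrac{2}{\rho^2}N^{1-b}$ of $ks_N$ and $\binom{k}{2}\tfrac{\rho^2}{N}$, this leaves a downward surplus of order $N^{1-2b-\eps}$. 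For the large range $k>CN^{1-b}$ I would instead invoke the Wright--Fisher comparison of Lemma~\ref{WFext}, which stochastically dominates $A_{m+1}$ from above and, via the explicit estimate \eqref{estimate expectation A_m}, gives a drift $\le -\tfrac{k^2}{6N}\le -\tfrac{C^2}{6}N^{1-2b}$; taking $C$ large makes this stronger than \eqref{planA1}. Note that the Wright--Fisher chain cannot replace the fine estimate near $L_{\rm top}$: for $\rho^2>1$ it has a higher centre $2N^{1-b}$, so close to $L_{\rm top}$ its drift is still positive and one genuinely needs the true transition probabilities there.

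Given \eqref{planA1}, the process $M_m:=A_{m\wedge\sigma}+cN^{1-2b-\eps}(m\wedge\sigma)$ is a supermartingale, so $cN^{1-2b-\eps}\,\EE{\sigma}\le \EE{A_0}=N$, i.e.\ $\EE{\sigma}=O(N^{2b+\eps})$. This loose but robust bound is exactly what fixes the polynomial exponent $2b$ in the statement; it reflects that the drift degenerates near the centre, so that although the deterministic descent would only take $O(N^{b}\ln N)$ steps, the simple Lyapunov argument charges $N^{2b}$. Markov's inequality then gives $\PP(\sigma>N^{2b+2\eps'})=O(N^{\eps-2\eps'})=o(1)$, a bound bounded away from $1$ on each time block of length $\asymp N^{2b+\eps}$; cutting $[0,N^{2b+2\eps'}]$ into $N^{\delta}$ blocks each long enough ($\gg\EE{\sigma}$) that the chain reaches $L_{\rm top}$ within it with probability bounded away from $0$, and using the Markov property together with $A_m\le N$ at the start of every block, exactly as in Lemma~\ref{Lemma CASP comes down}, upgrades this to $\PP(\sigma>N^{2b+2\eps'})=O(\exp(-N^{\delta}))$ for $\delta\le \eps'-\eps$.

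It remains to pass from $\sigma$ to $\tau_B$, that is, to exclude that the chain jumps from above $L_{\rm top}$ directly below $L_{\rm bot}$. Such an overshoot forces a downward jump of size $\ge 4N^{1-b-\eps}\gg h_N$ out of a state of order $N^{1-b}$; its probability is bounded by that of a jump of size $\ge h_N$ and hence, by Lemma~\ref{Lemma Log N Down}, is negligible, and if it nonetheless occurs the chain lands below the centre, where the symmetric upward-drift argument (the ``coming up'' Lemma~\ref{Lemma going up from 1}) returns it into $B$ within the same budget. The main obstacle is precisely the drift estimate \eqref{planA1} at the top of the narrow box: there selection and coalescence almost cancel, so one must correctly extract the surplus of order $N^{1-2b-\eps}$ while showing that every larger-jump and multiple-collision correction---controlled through \eqref{second}, \eqref{third} and \eqref{moments}---is of strictly smaller order, and one must ensure that the box $B$, of width only $\asymp N^{1-b-\eps}$, is actually hit rather than skated over.
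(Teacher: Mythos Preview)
Your proposal is correct and follows the same overall scheme as the paper: establish a uniform negative drift above the box, deduce a polynomial bound on the expected hitting time, and upgrade to the exponential estimate by the restart-and-iterate device already used in Lemma~\ref{Lemma CASP comes down}. The difference lies in how the drift is obtained. The paper's proof invokes Lemma~\ref{WFext} to reduce \emph{globally} to the Wright--Fisher case ($\rho^2=1$) and then computes the WF drift exactly via the negative-binomial generating function (its Step~3). You instead split into two ranges: the WF comparison and \eqref{estimate expectation A_m} for large $k$, and the true transition probabilities from Lemma~\ref{Lemma jumpsize=1} together with the large-jump bounds of Lemmata~\ref{Lemma Probability large upward jump} and~\ref{Lemma Log N Down} for $k$ near $L_{\rm top}$. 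Your observation that the WF chain has centre $2N^{1-b}>L_{\rm top}$ when $\rho^2>1$, so that the WF domination alone provides no negative drift at $L_{\rm top}$, is well taken; the paper's Step~1, read literally, only establishes the lemma for $\rho^2=1$, since the dominating WF process need not enter the box $B$ defined with the original $\rho^2$. A cleaner fix than your split, and closer to the paper's style, is to redo the direct generating-function computation for general $\mathscr W$ via $\E[A_{m+1}\mid A_m=y]=N\bigl(1-\E[(1-W_1)^H]\bigr)$ together with \eqref{second} and \eqref{third}---precisely as is done in the proof of Lemma~\ref{Lemma going up from 1}; this avoids having to extract the drift from Lemma~\ref{Lemma jumpsize=1} and separately controlling the tail contribution of large jumps. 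You also make the overshoot step (crossing $B$ without entering it) explicit via Lemma~\ref{Lemma Log N Down}, whereas the paper handles this only implicitly through its appeal to Proposition~\ref{Lemma exp leaving time}.
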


\begin{proof} The proof will be divided in three steps.\\ 
	1. The coalescence probabilities of the Wright-Fisher model are the smallest in our class of Cannings models with selection, see Lemma \ref{WFext}.  Therefore, the stopping time  $\tau_B$ for the CASP is stochastically dominated from above by the corresponding stopping time in a  Wright-Fisher model with selection. Consequently, we assume in the following that $\mathscr{W} = (1/N, ..., 1/N)$ and thus $\rho^2=1$.\\
2. We analyse the drift of  $(A_m)_{m \geq 0}$ in each point $y$ in $B'=[2 N^{1-b} + 2 N^{1-b-\eps'} ,N]$. We will show in part 3 of the proof that
	\begin{align}\label{DriftEst}
	y-\EE{A_1|A_0=y} \geq  2N^{1-2b-\eps'}, \qquad \text{for all } y \in B'.
	\end{align}
	The estimate \eqref{DriftEst} on the drift of $\mathcal{A}$ in $B'$ yields that for $m_0  = \lceil N^{2b+\eps'} \rceil $ it holds $\EE{A_{m_0}} \leq  2 N^{1-b} + 2 N^{1-b-\eps}$, since by Proposition \ref{Lemma exp leaving time} after entering the box $B$ the process $\mathcal{A}$ does not leave the box up to any polynomially long time except on an event with probability $O((N^{1-2b+\eps})^{h_N})$. Applying Markov's inequality yields for $m_0= \lceil N^{2b+\eps'} \rceil $
	\begin{align}
	\PP( A_{m_0} \geq 2 N^{1-b} + 2 N^{1-b-\eps}) &\leq \frac{\EE{A_{m_0}}}{2 N^{1-b} + 2 N^{1-b-\eps}} \leq \frac{2 N^{1-b} +2 N^{1-b-\eps'}}{2 N^{1-b} + 2 N^{1-b-\eps }} \\
	&=1+ \frac{N^{1-b-\eps'} -N^{1-b-\eps}}{N^{1-b} +N^{1-b-\eps}} = 1 + \frac{N^{- \eps'} - N^{-\eps}}{ 1+ N^{-\eps}} \\
	&=1- N^{-\eps}( 1+ N^{-(\eps' - \eps)} )( 1+ O(N^{-\eps})). 
	\end{align}
	Hence, after time $\lceil N^{2b+\eps'} \rceil $ the process started in $N$ is with a probability of at least $1- N^{-\eps} (1+o(1))$ in $B$. If the process $\mathcal{A}$ did not enter $B$ until time $\lceil N^{2b+\eps'} \rceil $, in the worst case the process is still in the state $N$. Therefore, recalling that $\delta \le \eps'-\eps$,  the probability that the process is after time $\lceil N^{2b+2 \eps'} \rceil $ still above $B$ can be estimated from above via
	\begin{eqnarray}
	\PP( A_{\lceil N^{2b+2\eps'} \rceil   } \geq 2 N^{1-b} + 2 N^{1-b-\eps}) &\leq& \PP( A_{m_0} \geq 2 N^{1-b} + 2 N^{1-b-\eps})^{N^{\eps'}} \leq (1-N^{-\eps})^{N^{\eps' }} \\
	&=& O(\exp( - N^{\delta})).
	\end{eqnarray}
	\noindent 
	3. It remains to show \eqref{DriftEst}. Recalling the ``balls in boxes'' description of the one-step transition probability of the CASP as described in Section \ref{secCASP},  let $\1_{C_i}$ be the indicator of the event $C_i$ that the $i$-th box is occupied by at least one ball. We can rewrite
	\begin{align}
	y-\EE{A_1|A_0=y} &= y- \EE{\sum_{i=1}^{N} \1_{C_i}|A_0=y}\\
	&=y- N\left( 1-\EE{\left(1-\frac{1}{N} \right)^H} \right) \label{Generating function H}
	\end{align}
	with $H=\sum_{i=1}^{y} G^{(i)}$ with $G^{(1)},...,G^{(y)}$ independent and $G^{(i)} \sim \text{Geom}(1-s_N)$. The expectation in \eqref{Generating function H} is the generating function of a negative binomial distribution with parameters $y$ and $(1-s_N)$ evaluated at $1-\frac{1}{N}$, which allows to continue  \eqref{Generating function H} as
	\begin{align} y- N \left(1- \left[ \frac{(1-s_N)(1-\frac{1}{N})}{1-s_N(1-\frac{1}{N})}\right]^y \right)=y-N \left( 1- \left[1- \frac{1}{N} \frac{1}{1-s_N  +\frac{s_N}{N}} \right]^y \right). \label{Estimate from below}
	\end{align}
	Using Taylor expansion for the function $x\mapsto (1-x)^y$ around $0$ with the remainder in Lagrange form yields
		\begin{equation}\label{taylor1}
	(1-x)^y = 1-yx + \binom{y}{2} x^2 - \binom{y}{3} x^3 (1-\xi)^{y-3} \geq 1-yx + \binom{y}{2} x^2 - \binom{y}{3} x^3
	\end{equation}
	for some $\xi \in [0,x]$. Abbreviating  $u:= \frac{1}{1-s_N  +\frac{s_N}{N}}$ in \eqref{Generating function H} and using \eqref{taylor1} yields that the r.h.s of \eqref{Estimate from below} can be estimated from below by
	\begin{align}
	 &\geq y-N \left( 1- \left[1- y \frac{u}{N} + \binom{u}{2} \frac{u^2}{N^2}- \binom{y}{3} \frac{u^3}{N^3} \right] \right) \\
	&= y- u y + \frac{u^2}{N} \binom{y}{2} - \frac{u^3}{N^2} \binom{y}{3} =:  h(y).
	\end{align}
	In order to show that the polynomial $h(y)$ is bounded away from $0$ as claimed in \eqref{DriftEst} we check that $h$ is
	positive at the lower boundary and that the derivative $h'$ is positive on the interval $B'$.
	We can factorise $h$ as $h(y) = y\tilde h(y)$, with
	\begin{align}
	\tilde h(y)=  1-u + \frac{u^2}{2 N } (y-1) - \frac{u^3}{6N^2} (y-1)(y-2).
	\end{align}
		It is straightforward to check for $y_0=2 N^{1-b} + 2N^{1-b-\eps'}$ that $\widetilde{h}(y_0) >0$. Thus it suffices to show that $\widetilde{h}'$ is strictly positive on $B'$. We have
	\begin{align}
	\widetilde{h}'(y) = - \frac{u^3}{3N^2}y + \frac{1}{2} \frac{u^3}{N^2} +  \frac{u^2}{2N}.
	\end{align}
	Hence $\widetilde{h}'(y)>0$ is implied by the inequality
		\begin{equation}
	0 \leq y \leq \frac{3}{2} \frac{N}{u} + \frac{1}{2} 
	\end{equation}
	which is fulfilled for all $y \in B'$. Hence the drift $\EE{A_1|A_0=y}-y$ is negative for all states $y \in B'$ with minimal absolute value bounded from below by 
	\begin{equation}
	h(y_0) \geq 2 N^{1-2b-\eps'} (1+ O(N^{-\eps'})).
	\end{equation}
	which proves \eqref{DriftEst}.
\end{proof}
Similar as Lemma \ref{Lemma Coming down} we show now that the time to reach the  box $B$   from below is also no longer than polynomial with high probability.

\begin{lemma}[Going up from $1$] \label{Lemma going up from 1} \qquad \\
	Let $B$ and $\tau_B$ as in Lemma \ref{Lemma Coming down}. Then for $0<\eps < \eps' <\frac{1-2b}{3}$ and $\delta \leq \eps'-\eps.$ 
	\begin{align}
	\pp{ \tau_B \geq N^{2b+2\eps'}\mid A_0=1} = O((N^{1-2b+\eps})^{h_N}).
	\end{align}
\end{lemma}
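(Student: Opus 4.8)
The plan is to follow the blueprint of Lemma~\ref{Lemma Coming down}, with one essential new difficulty: in contrast to the coming--down regime, the upward drift of $\mathcal A$ below the center $M_N:=\frac{2}{\rho^2}N^{1-b}$ is \emph{not} uniform. At a state $y$ it equals
$D(y):=\E[A_{m+1}-A_m\mid A_m=y]=y s_N-\binom{y}{2}\frac{\rho^2}{N}+o(N^{1-2b-\eps})$,
which degenerates to order $s_N$ as $y\to 1$. This drift is obtained exactly as in Lemma~\ref{Lemma jumpsize=1} from the branching and coalescence steps; under the present hypotheses the pair-collision term $\frac{\rho^2}{N}$ comes from \eqref{second}, while the higher-order collisions are controlled by \eqref{moments} and stay negligible for $y\le M_N(1+o(1))$ precisely because $b>\tfrac12$. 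Consequently the ``bound the expectation and apply Markov once'' step of Lemma~\ref{Lemma Coming down} is not available here. Instead I would first bound the \emph{expected} time $\E_1[\tau']$, with $\tau':=\inf\{m:A_m\ge\underline n\}$, needed to reach the lower boundary $\underline n:=\lceil M_N-2N^{1-b-\eps}\rceil$ of $B$, and then boost this to the stated probability by a restart argument.

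For the expectation bound I would split the ascent at the level $M_N/2$. On the lower phase $1\le y\le M_N/2$ the process grows multiplicatively, and I would use $\ln A_m$ as a Lyapunov function: an elementary computation (exact at $y=1$, by Taylor expansion for $y\ge2$) gives $\E[\ln A_{m+1}-\ln A_m\mid A_m=y]\ge c\,s_N$ for some $c>0$, since below half the center the multiplicative rate $s_N$ beats both the coalescence contribution and the concavity correction $\frac{p_++p_-}{2y^2}=O(s_N/y)$. Optional stopping applied to $\ln A_m-c\,s_N\,m$ then yields $\E_1[\sigma]=O(s_N^{-1}\ln N)=O(N^{b}\ln N)$, where $\sigma:=\inf\{m:A_m\ge M_N/2\}$. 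On the upper phase $M_N/2\le y<\underline n$ I would revert to the \emph{linear} drift as in Lemma~\ref{Lemma Coming down}: there $D(y)\ge 2N^{1-2b-\eps}$ uniformly (the drift error terms are $O(N^{2-4b})\ll N^{1-2b-\eps}$ because $b>\tfrac12$), so optional stopping on $A_m-2N^{1-2b-\eps}m$ contributes another $O(N^{b+\eps})$. Altogether $\E_1[\tau']=O(N^{b+\eps}\ln N)$.

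To upgrade this I would chop $[0,N^{2b+2\eps'}]$ into $N^{\eps'}$ blocks of length $N^{2b+\eps'}$. The CASP is stochastically monotone (more potential ancestors yield, after branching and coalescence, stochastically more potential ancestors), so the worst starting value for each block is $A_0=1$; by Markov's inequality and the strong Markov property each block reaches level $\underline n$ with probability at least $1-\E_1[\tau']/N^{2b+\eps'}=1-O(N^{-b-(\eps'-\eps)}\ln N)$, whence $\PP(\tau'\ge N^{2b+2\eps'}\mid A_0=1)=O(\exp(-N^{\delta}))$ for a suitable $\delta\le\eps'-\eps$. Finally I would introduce the event $G$ that $\mathcal A$ makes no jump of absolute size $\ge h_N$ before time $N^{2b+2\eps'}$; by Lemmata~\ref{Lemma Probability large upward jump} and \ref{Lemma Log N Down} and a union bound over the $O(N^{2b+2\eps'})$ generations (absorbing the polynomial prefactor and the factor $h_N^{4h_N}$ into $N^{\eps h_N}$, using $h_N\to\infty$), this has $\PP(G^c)=O((N^{1-2b+\eps})^{h_N})$. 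On $G$ the first visit to $[\underline n,\infty)$ overshoots $\underline n$ by less than $h_N\ll N^{1-b-\eps}$, the half-width of $B$, so on $G$ hitting level $\underline n$ coincides with entering $B$; hence $\{\tau_B\ge N^{2b+2\eps'}\}\subseteq\{\tau'\ge N^{2b+2\eps'}\}\cup G^c$, and combining the two bounds gives $\PP(\tau_B\ge N^{2b+2\eps'}\mid A_0=1)=O(\exp(-N^{\delta}))+O((N^{1-2b+\eps})^{h_N})=O((N^{1-2b+\eps})^{h_N})$.

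The crux --- and the place where the argument truly leaves the track of Lemma~\ref{Lemma Coming down} --- is the weak, state-dependent drift near the minimal state $y=1$, which forces the two-phase Lyapunov analysis: a linear drift suffices near the center, but the bulk of the ascent must be driven by the multiplicative (logarithmic) growth. The delicate quantitative point is that the logarithmic drift stays positive only below a fixed fraction of $M_N$: near $\underline n$ the variance correction can overwhelm it when $b$ is close to $1$, which is exactly why the crossover at $M_N/2$ and the constraint $\eps<\eps'<\frac{2b-1}{3}$ are needed. One should also note that the final order $(N^{1-2b+\eps})^{h_N}$ is dictated by the large-jump probability $\PP(G^c)$ rather than by the (much smaller) restart error $\exp(-N^\delta)$.
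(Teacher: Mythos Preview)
Your overall architecture (bound the hitting time of $\underline n$, boost by restarts and monotonicity, then control the overshoot via the no--large--jump event $G$) is sound and leads to the stated order $(N^{1-2b+\eps})^{h_N}$. But the route you take is genuinely different from the paper's, and one of your steps has a gap.

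\textbf{Comparison with the paper.} The paper never introduces a Lyapunov function or optional stopping. Its key observation is that the \emph{relative} drift is uniformly bounded below on all of $B'=[1,M_N-N^{1-b-\eps'}]$:
\[
\E[A_1\mid A_0=y]-y\ \ge\ y\cdot\tfrac{\rho^2}{4}\,N^{-b-\eps'},\qquad y\in B',
\]
which it proves by a direct computation using the negative--binomial generating function and Assumptions \eqref{second}, \eqref{third}. Iterating gives $\E[A_{m_0}\mid A_0=1]\ge M_N-N^{1-b-\eps'}$ for $m_0\asymp N^{b+\eps'}\ln N$ (invoking Proposition~\ref{Lemma exp leaving time} to ``cap'' the expectation once $B$ is entered), and then Markov's inequality is applied to $N-A_{m_0}$, followed by restarts. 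So the paper replaces your two--phase Lyapunov analysis by the single multiplicative expectation bound, which sidesteps the ``log--drift fails near $\underline n$'' issue you correctly diagnosed. What your approach buys in exchange is an explicit bound on $\E_1[\tau']$ via standard martingale tools; what the paper's approach buys is that no splitting is needed and no phase boundary has to be controlled.

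\textbf{A gap in the two--phase argument.} Your Phase~2 optional stopping on $A_m-2N^{1-2b-\eps}m$ requires the submartingale property to hold on the entire trajectory between $\sigma$ and $\tau'$. But after time $\sigma$ the process may well dip below $M_N/2$; the linear drift bound $D(y)\ge 2N^{1-2b-\eps}$ only holds for $y$ in (roughly) $[2N^{1-b-\eps},\,\underline n]$, and fails for small $y$ (indeed $D(1)=s_N\ll N^{1-2b-\eps}$ whenever $\eps<1-b$). As written, the optional stopping step is therefore not justified. Two clean fixes: (i) extend Phase~2 down to $y^*\asymp N^{1-b-\eps}$ where the linear bound still holds, and add a short argument (e.g.\ a gambler's--ruin estimate using the strong drift near $M_N/2$) that between $\sigma$ and $\tau'$ the process does not descend below $y^*$ except on an event you can absorb into the $\exp(-N^\delta)$ error; or (ii) abandon the split and use the paper's uniform relative--drift bound on $\E[A_m]$ directly, which automatically covers all of~$B'$.
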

\begin{proof}
	The strategy of the proof is similar to the proof of Lemma \ref{Lemma Coming down}: we estimate the drift in each point $y$ in the box $B'$ from below, with $B' := [1, \frac{2}{\rho^2 }  N^{1-b} -  N^{1-b-\eps' }]$. \\
	1. Assume we have shown for $y \in B'$ and $N$ large enough
	\begin{align}
	\EE{A_1|A_0=y} -y \geq y \frac{\rho^2}{4} N^{-b-\eps '} . \label{desired drift upwards}
	\end{align}
	Then
	\begin{align}
	\EE{A_{m_0}|A_0=1} \geq \inf \left\{ \left( 1+ \frac{\rho^2}{4} N^{-b-\eps '} \right)^{m_0} , \frac{2}{\rho^2 } N^{1-b} - N^{1-b-\eps '}\right\},
	\end{align}
	as long as $m_0$ is at most of polynomial order in $N$, since due to Proposition \ref{Lemma exp leaving time} after entering the box $B$ the process $A_m$ does not leave the box up to any polynomially long time with probability $1- O((N^{1-2b+\eps})^{h_N})$.
	In particular, 
	for $m_0 = \lceil  c N^{b+\eps ' }\ln N \rceil $ and some constant $c >0$, we have $\EE{A_{m_0}} \geq \frac{2}{\rho^2 } N^{1-b} - N^{1-b-\eps '}$.\\
	Now applying Markov's inequality yields
	\begin{align}
	\PP(A_{m_0} \leq \frac{2}{\rho^2 } N^{1-b} - N^{1-b-\eps }) & =	
	\PP(N- A_{m_0} \geq N- \frac{2}{\rho^2 } N^{1-b} + N^{1-b-\eps })\\
	&\leq \frac{\EE{N- A_{m_0}}}{N- \frac{2}{\rho^2 } N^{1-b} + N^{1-b-\eps}} 
	\leq \frac{ N - \frac{2}{\rho^2 } N^{1-b} + N^{1-b-\eps '}}{N- \frac{2}{\rho^2 } N^{1-b} + N^{1-b-\eps }} \\ &= 1- N^{-b-\eps}(1+ o(1)). \label{Iterate Markov}
	\end{align}
	In the worst case at time $m_0$ the process $A$ is still in state $1$. Iterating the argument in \eqref{Iterate Markov} yields that the probability that after time $\lceil N^{b+\eps'} \rceil m_0$ the process is still below $\frac{2}{\rho^2 } N^{1-b} - N^{1-b-\eps}$ is of order $O(\exp(-N^{\delta}))$, as claimed in the lemma.\\
	2. Now it remains to show \eqref{desired drift upwards}. For $y \in B'$ we observe that
		\begin{align}
	\EE{A_1|A_0=y}-y &= N (1 - \EE{(1-W_1)^H|A_0=y})-y \\
	&= N (1 -\EE{  \EE{(1-W_1)^H|A_0=y , W_1}|A_0=y})-y \\
	&=N \left(1 -\EE{\left( 1- \frac{W_1}{1-s_N +s_N W_1}\right)^y} \right)-y \\
	&= N \left(1 - \EE{ 1 - y \frac{W_1}{1-s_N +s_N W_1} + \binom{y}{2} \frac{W_1^2}{( 1-s_N +s_N W_1)^2} +O(y^3 W_1^3)  } \right) -y \\
	&\geq N \EE{ y \frac{W_1}{1-s_N +s_N W_1} - \binom{y}{2} \frac{W_1^2}{( 1-s_N )^2} +O(y^3 W_1^3)  }) -y . \label{Intermediate calculation result}
	\end{align}
	We analyse the first summand in the expectation separately, since the denominator and nominator both contain the random variable $W_1$.
	\begin{align}
	\EE{ y \frac{W_1}{1-s_N +s_N W_1}} &=  \EE{yW_1 (1+s_N +O(s_N W_1+ s_N^2) ) }\\
	&= \frac{y}{N} (1+s_N + O(s_N^2))
	\end{align}
	Using that $\EE{W_1^3}=O(N^{-3})$ (which follows from the assumptions of Theorem \ref{Haldane  Formel}), one can continue  \eqref{Intermediate calculation result} as
	\begin{align}
&= y (1+s_N +O(s_N^2)) -\binom{y}{2} \frac{\rho^2}{N} \frac{1}{( 1-s_N )^2} - y + O(y N^{-b-1}) +  O(y^3 \frac{1}{N^2})\\
	&\geq y \left(1+s_N +O(s_N^2) - \frac{ s_N - \frac{\rho^2 }{2} s_N N^{-\eps '} }{(1-s_N)^2} -1  + O(N^{-b-1}) + O(\frac{y^2}{N^2})\right)  \\
	&\geq  y\frac{\rho^2}{4} s_N N^{-\eps '},
	\end{align}
	where in the first inequality we used that $y \in B'$ is bounded from above. This gives the desired estimate \eqref{desired drift upwards} for the drift, and completes the proof of the lemma.
\end{proof}

\noindent
{\bf Completion of the proof of Theorem \ref{Haldane  Formel}b.}
\\
1. For proving \eqref{toshow} we will make use of Corollary \ref{corfix}, and to this purpose
	 derive asymptotic upper and lower bounds on the expectation of $A_{\rm eq}$ via stochastic comparison from above and below. Consider a time-stationary version $\mathcal A^{\rm stat}  = (A^{\rm  stat}_m)_{m\in \mathbb Z}$  and a  CASP $\mathcal A= (A_m)_{m \geq 0}$ that is started in $N$. We can couple both processes such that a.s. $A_m \geq A_m^{\rm stat}$  for all $m \geq 0$. This  implies
	\begin{align}
	\EE{A_{\rm eq}}=\EE{A_m^{\rm stat}} \leq \EE{A_m} ,\qquad m \geq 0.
	\end{align}
	Fix $0 < \alpha < \beta < \frac{2b-1}{3}$, and consider the box $B^{\alpha}=[\frac{2}{\rho^2} N^{1-b} \pm N^{1-b-\alpha}]$ as well as the (smaller) box $B^{\beta}=[\frac{2}{\rho^2}N^{1-b} \pm N^{1-b-\beta}]$.  Define $\tau_{B^{\beta}}:= \inf \{ m \geq 0 : A_m \in B^{\beta} \}$, the first hitting time of $B^{\beta}$, and $\tilde \tau_{B^{\alpha}}:= \inf \{m \geq \tau_{B^{\alpha}}: A_m \notin B^\alpha\}$, the first leaving time of $B^{\alpha}$.
	Choosing the time horizon $m_0= \lceil N^{2b+ 2\eps} \rceil$ with $0<\eps <1-b,$ we obtain
	\begin{align}
	\EE{A_{m_0}}&= \EE{A_{m_0}| \tilde \tau_{B^{\alpha}}> m_0 \geq \tau_{B^{\beta}} } \PP(\tilde \tau_{B^{\alpha}}> m_0 \geq \tau_{B^{\beta}}) \\
	&+\EE{A_{m_0}|m_0 < \tau_{B^{\beta}} } \PP(m_0 < \tau_{B^{\beta}}) \\
	&+\EE{A_{m_0}| m_0 > \tilde \tau_{B^{\alpha}} \geq \tau_{B^{\beta}} } \PP( m_0 > \tilde \tau_{B^{\alpha}} \geq \tau_{B^{\beta}}).
	\end{align}
	By Lemma \ref{Lemma Coming down} the second summand on the right hand side is of order $O(N\exp( - N^{\delta}))$ and by Lemma \ref{Lemma exp leaving time} the third summand is 
	of order $O(N(N^{1-2b+\alpha})^{h_N})$. Concerning the first summand we obtain $\PP(\tilde \tau_{B^{\alpha}}> m_0 \geq \tau_{B^{\beta}})= 1- O(N(N^{1-2b+\alpha})^{h_N})$. Observing that   $A_{m_0} = \frac{2}{\rho^2} N^{1-b} (1+o(1))$ whenever $A_{m_0}\in B^{\alpha}$ we thus conclude
	\begin{align}
\EE{A_{m_0}}&= \EE{A_{m_0}| \tilde \tau_{B^{\alpha}}> m_0 \geq \tau_{B^{\beta}} } \PP(\tilde \tau_{B^{\alpha}}> m_0 \geq \tau_{B^{\beta}}) + O(N(N^{1-2b+\alpha})^{h_N}) \\
&= \frac{2}{\rho^2} N^{1-b} (1+o(1)).
\end{align}
 This yields the desired upper bound on $\EE{A_{\rm eq}^{(N)}}$. The same argument applies for the lower bound, where we use a CASP started in the state $1$ and apply Lemma \ref{Lemma going up from 1} instead of Lemma~\ref{Lemma Coming down}.  
 \\

 \noindent
 {\color{black}{\bf Proof of Lemma \ref{Lemma implying Conditions} }\\
 a) In the symmetric Dirichlet case, $W_1^{(N)}$ is Beta$(\alpha_N, (N-1)\alpha_N)$-distributed, with $n$-th moment  $\frac {\Gamma(\alpha_N+n)}{\Gamma(\alpha_N) }\frac{\Gamma(\alpha_N N)}{\Gamma(\alpha_N N+n)}$. From this it is easy to see that \eqref{third} and \eqref{moments} are satisfied. In particular, the second moment is $\frac{\alpha_N+1}{N(N \alpha_N +1)}$, which by assumption obeys \eqref{second}.  \\
 b) For the Dirichlet-type weights, assume that $\mathscr W^{(N)}$ is of the form \eqref{Dirtype} with $Y$ obeying Conditions (i) and (ii) in Lemma \ref{Lemma implying Conditions}b). It is straightforward to  see that \eqref{second} and \eqref{third} are satisfied with $\rho^2:= \EE{Y^2}/(\EE{Y})^2$.  We now set out to show \eqref{moments}. To this end we first observe that
 \begin{equation} \label{estW}
 \EE{ \left(W_1^{(N)}\right)^n } \le \EE{ \left(\frac{Y_1}{Y_2+\cdots+Y_N}\right)^n} \le \EE{Y_1^n} \left(\frac {2}{N}
\right)^{n}
  \EE{\left(\frac {N-1}{Y_2+\cdots+ Y_N}\right)^n}.
 \end{equation}
 To bound the first factor  $\EE{Y_1^n}$ from above, we note, using Condition (i) of Lemma \ref{Lemma implying Conditions} on the exponential moments of $Y_1$ and the assumption $n \le 2h_N$ in \eqref{moments}, that
 \begin{equation}
\infty >\EE{ e^{cY_1} }\ge \EE{\frac{(cY_1)^n}{n!}}\ge \left(\frac{c}{2h_N}\right)^n \EE{Y_1^n}.
 \end{equation}
\noindent 
From this we obtain that 
\begin{equation}\label{bound}
\E[Y_1^n]\leq \E[e^{cY_1}]\Big(\frac{2h_N}{c}\Big)^n.
\end{equation}
On the other hand, the last factor in \eqref{estW} equals $\EE{\exp\left(-n\log\left(\frac  {Y_2+\cdots+ Y_N}{N-1}\right)\right)}$, which by Jensen's inequality is bounded from above by 
\[\EE{\exp\left(-\frac n{N-1}\left(\log Y_2 + \cdots + \log Y_N\right)\right)} =
	\left( \EE{\exp\left(-\frac{n}{N-1} \log Y_1 \right)}\right)^{N-1}.\]
By a Taylor expansion, and using Condition (ii) which implies that $\EE{\exp(-\frac{n}{N-1} \log Y_1)}$ is finite for $N$ big enough, we obtain
	\[\left(  1-\frac{n}{N-1} \EE{\log Y_1} + O(n^2 N^{-2} ) \right)^{N-1}  
	 \leq \left( e^{| \EE{\log Y_1}| (1+ O( \frac{n}{N} ))} \right)^n.
\]
Plugging this together with \eqref{bound} into \eqref{estW} yields 
$$
\E[\left(W_1^{(N)}\right)^n]\leq \E[e^{cY_1}]\Big(\frac{4e^{|\E[\log(Y_1)] | (1+ O( \frac{n}{N} ))}}{c} \frac{h_N}{N}\Big)^n.
$$
This implies \eqref{moments} with $K=\max\{1,\frac{5}{c} e^{| \EE{\log Y_1} | }\}$.
\qed 
}
\\

\noindent
\textbf{Acknowledgements}\\

We are grateful to two anonymous referees for stimulating and helpful comments, and thank G\"{o}tz Kersting for fruitful discussions and Jan Lukas Igelbrink for generating Figure 1. 

A.G.C. acknowledges CONACYT Ciencia Basica A1-S-14615 and travel support through the SPP 1590 and C.P. and A.W. acknowledge partial support through DFG project WA 967/4-2 in the SPP 1590.


\begin{thebibliography}{10}
\expandafter\ifx\csname url\endcsname\relax
  \def\url#1{\texttt{#1}}\fi
\expandafter\ifx\csname urlprefix\endcsname\relax\def\urlprefix{URL }\fi
\expandafter\ifx\csname href\endcsname\relax
  \def\href#1#2{#2} \def\path#1{#1}\fi

\bibitem{ABBS} { E. Anceaume, Y. Busnel, E. Schulte-Geers, B. Sericola},  {Optimization results for a generalized coupon collector problem}, J. Appl. Probab.  53 (2016), 622--629.

\bibitem{BGPW}
E.~Baake, A.~Gonz{\'a}lez~Casanova, S.~Probst, A.~Wakolbinger, L.~Yuan,
  {Modelling and simulating Lenski's long-term evolution experiment}, Theor.
  Popul. Biol. 127 (2019) 58--74.

\bibitem{BoGoPoWa2}
F. Boenkost, A.~Gonz{\'a}lez~Casanova, C. Pokalyuk, A. Wakolbinger. Haldane's formula in Cannings models: The case of moderately strong selection, (2020) Preprint: \href{https://arxiv.org/abs/2008.02225}{arXiv:2008.02225}.

\bibitem{Cannings1974}
C.~Cannings, {The latent roots of certain Markov chains arising in genetics: a
  new approach 1. Haploid models}, Adv. Appl. Prob. 6 (1974) 260--290.
  
  \bibitem{C}
F.~Cordero, {Common ancestor type distribution: A Moran model and its
  deterministic limit}, Stochastic. Process. Appl. 127 (2017) 590--621.
  
  \bibitem{RD}
R.~Durrett, Probability models for DNA sequence evolution, Springer Science \&
  Business Media, 2008.
  
  
\bibitem{EK}
S.~Ethier, T.~Kurtz, {Markov processes: Characterization and convergence},
  John Wiley \& Sons, Inc., 1986.

\bibitem{Ewens2004}
W.~Ewens, {Mathematical Population Genetics: I. Theoretical Introduction,
  Second Edition}, Springer Science + Business Media New York, 2004.
  
  
\bibitem{Fisher1922}
R.~Fisher, On the dominance ratio, Proc. R. Soc. Edin. 50 (1922) 204--219.


  \bibitem{GKWY}
A.~Gonz{\'a}lez~Casanova, N.~Kurt, A.~Wakolbinger, L.~Yuan, {An
  individual-based model for the Lenski experiment, and the deceleration of the
  relative fitness}, Stoch. Process. Appl. 126 (2016) 2211--2252.

\bibitem{GS}
A.~Gonz{\'a}lez~Casanova,  D.~Span{\`o}, {Duality and fixation in
  $\Xi$-Wright-Fisher processes with frequency-dependent selection}, Ann. Appl.
  Probab. 28 (2018) 250--284.
  
    \bibitem{Haldane1927}
J.B.S. Haldane, {The mathematical theory of natural and artificial
  selection}, Proc. Camb. Philos. Soc. 23 (1927) 838--844.
  
  \bibitem{SJ}
S.~Janson, {Tail bounds for sums of geometric and exponential variables}, Statist.
  Probab. Lett. 135 (2018) 1--6.

  \bibitem{Joag-Dev1983} K.~Joag-Dev, F. Proschan, {Negative Association of Random Variables with Applications}, Ann. Statis. 11 (1983), 286--295.

  \bibitem{K}
M.~Kimura, { Some problems of stochastic processes in genetics}, Ann. Math.
  Statist. 28 (1957) 882--901.

  \bibitem{KN}
S.~M. Krone, C.~Neuhauser, {Ancestral processes with selection}, Theor. Popul.
  Biol. 51 (1997) 210--237.
  
  
  \bibitem{K1}
T.~Kurtz, {Solutions of ordinary differential equations as limits of pure jump
  Markov processes}, J. Appl. Probab. 7  (1970) 49--58.

  
  
\bibitem{K2}
T.~Kurtz, {Approximation of population processes}, CBMS-NSF Regional Conference
  Series in Applied Mathematics. SIAM 36, 1981.
  
  
\bibitem{M1}
M.~M{\"o}hle, {Total variation distances and rates of convergence for ancestral
  coalescent processes in exchangeable population models}, Adv. Appl. Probab.
  32 (2000) 983--993.


\bibitem{PW}
Z.~Patwa, L.~Wahl, The fixation probability of beneficial mutations, J. R. Soc.
  Interface 5~(28) (2008) 1279--1289.

\bibitem{PP}
C.~Pokalyuk, P.~Pfaffelhuber, The ancestral selection graph under strong
  directional selection, Theor. Popul. Biol. 87 (2013) 25--33.


  

\bibitem{Wright1931}
S.~Wright, {Evolution in Mendelian populations}, Genetics 16 (1931) 97--159.

\end{thebibliography}
\end{document}